\documentclass[3p,times]{article}

\usepackage[left=2cm, right=2cm, top=2cm]{geometry}

\usepackage{amsmath,amssymb,amsthm,mathtools,mathdots,nicefrac,xcolor,stmaryrd}
\usepackage{authblk}


\theoremstyle{definition} 
\newtheorem{theorem}{Theorem}[section]

\newtheorem{corollary}[theorem]{Corollary}
\newtheorem{lemma}[theorem]{Lemma}
\newtheorem{proposition}[theorem]{Proposition}

\newtheorem{example}[theorem]{Example}

\newtheorem{question}[theorem]{Question}

\DeclareMathOperator{\val}{val}
\DeclareMathOperator{\tw}{tw}

\DeclareMathOperator{\gon}{gon}

\newcommand{\sn}{\text{sn}}

\DeclarePairedDelimiter\abs{\lvert}{\rvert}

\DeclarePairedDelimiter\norm{\lVert}{\rVert}

\DeclarePairedDelimiter\paren{(}{)}

\makeatletter
\let\oldabs\abs
\def\abs{\@ifstar{\oldabs}{\oldabs*}}
\let\oldnorm\norm
\def\norm{\@ifstar{\oldnorm}{\oldnorm*}}
\let\oldparen\paren
\def\paren{\@ifstar{\oldparen}{\oldparen*}}
\makeatother


\title{On the scramble number of graphs}
\author{Marino Echavarria, Max Everett, Robin Huang, Liza Jacoby, Ralph Morrison, and Ben Weber}


\begin{document}


\maketitle

\begin{abstract}
The scramble number of a graph is an invariant recently developed to aid in the study of divisorial gonality.  In this paper we prove that scramble number is NP-hard to compute, also providing a proof that computing gonality is NP-hard even for simple graphs, as well as for metric graphs.  We also provide general lower bounds for the scramble number of a Cartesian product of graphs, and apply these to compute gonality for many new families of product graphs. 

\end{abstract}

\section{Introduction}

Chip-firing games on graphs form a combinatorial analog of divisor theory on algebraic curves.  Although these games were earlier studied through the lenses of structural combinatorics \cite{chip_firing} and the Abelian sandpile model \cite{sandpile}, it was Baker and Norine \cite{bn} that established the parallels with algebraic geometry, defining divisors, equivalence, degree, and rank and proving a graph-theoretic version of the Riemann-Roch theorem.  This was extended in \cite{gk,mz} from finite\footnote{Throughout when we say \emph{finite}, we mean \emph{not metric}.} graphs to metric graphs, which have lengths assigned to their edges.  Combined with Baker's specialization lemma \cite{baker_specialization}, this framework has been used to study algebraic curves through purely combinatorial means, as in \cite{cdpr}. 

The \emph{divisorial gonality} (or simply \emph{gonality}) of a (finite) graph is the minimum degree of a rank \(1\) divisor on that graph.  Informally, it can be thought of as the minimum number of chips that can be placed on that graph such that \(-1\) debt can be eliminated via chip-firing moves, no matter where that debt is placed.  This is one of several graph theoretic analogs of the gonality of an algebraic curve, the minimum degree of a rational map from the curve to the projective line.  

It was shown in \cite{graph_gonality_is_hard} that the gonality of a graph is NP-hard to compute; indeed, it is in the class of APX-hard problems, which are hard to even approximate.  In specific examples it is often feasible to give an upper bound \(\textrm{gon}(G)\leq d\) on gonality, by exhibiting a positive rank divisor of degree \(d\).  However, to conclude \(\gon(G)=d\) one must  also argue \(\textrm{gon}(G)\geq d\); this is quite difficult with a na\"{i}ve approach, as one would have to check all effective divisors of degree \(d-1\) to verify that they do not have positive rank.  Thus lower bounds on graph gonality are of paramount importance.

Several lower bounds relate gonality to well-studied invariants from graph theory.  It was shown in \cite{ckk} that \(\min(\lambda(G),|V(G)|)\leq \gon(G)\), where \(\lambda(G)\) denotes the edge-connectivity of \(G\).  A series of improvements on this bound came from \cite{db}, which showed that the strict bramble number of a graph is a lower bound on gonality; and from \cite{treewidth}, which showed that the treewidth of a graph is a lower bound on gonality.

The most powerful lower bound known to date is the \emph{scramble number} of a graph, the focus of this paper.  This invariant was introduced in \cite{harp2020new}, where it was proved that the scramble number is at least as large as treewidth, and is no larger than gonality:
\[\textrm{tw}(G)\leq \textrm{sn}(G)\leq\textrm{gon}(G).\]
Scramble number has been used to compute several previously unknown graph gonalities, including for stacked prism graphs and toroidal grid graphs of arbitrary size.  As such, scramble number shows a great deal of promise as a tool for studying divisor theory of graphs, and is worthy of further exploration.

Our first major result regards the computational complexity of the scramble number of a graph.  Starting with a (simple) graph \(G\) on \(m\) vertices, we construct a (simple) graph \(\widehat{G}\) on \(n=2m\) vertices with \(\textrm{sn}(\widehat{G})=\textrm{gon}(\widehat{G})=2m-\alpha(G)\) , where \(\alpha(G)\) denotes the independence number of \(G\). We also prove the same result holds for any metric version \(\Gamma\) of such a finite graph \(G\). Since computing \(\alpha(G)\) is NP-complete, this leads us to the following three results.

\begin{theorem}\label{theorem:NP_hard_scramble}
Computing the scramble number of a graph is NP-hard.
\end{theorem}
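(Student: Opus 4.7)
The plan is a polynomial-time many-one reduction from the NP-hard problem of computing the independence number $\alpha(G)$ of a simple graph. Given $G$ on $m$ vertices, I would first describe an explicit, polynomial-time construction of the auxiliary graph $\widehat{G}$ on $n=2m$ vertices promised in the introduction, obtained by adjoining $m$ new vertices to $G$ and adding a carefully chosen set of edges.

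Next, I would establish $\sn(\widehat{G}) = 2m - \alpha(G)$ via two separate bounds, likely packaged as lemmas. For the upper bound $\sn(\widehat{G}) \le \gon(\widehat{G}) \le 2m-\alpha(G)$, I would exhibit a positive-rank divisor of degree $2m-\alpha(G)$: place chips on the $m$ adjoined vertices together with the $m-\alpha(G)$ vertices of $G$ lying outside a maximum independent set, verify via a Dhar-style burning argument that $-1$ debt can be eliminated from any vertex, and then invoke the general inequality $\sn(\widehat{G}) \le \gon(\widehat{G})$. For the lower bound $\sn(\widehat{G}) \ge 2m - \alpha(G)$, I would design a scramble $\mathcal{S}$ on $\widehat{G}$ whose eggs are indexed by combinatorial pieces keyed to independent sets in $G$, and check both the hitting-number condition and the minimum egg-cut condition that jointly certify the scramble order.

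Once $\sn(\widehat{G}) = 2m - \alpha(G)$ is in hand, the theorem follows formally: from a value of $\sn(\widehat{G})$ one recovers $\alpha(G) = 2m - \sn(\widehat{G})$, so any polynomial-time algorithm for scramble number yields one for \textsc{Maximum Independent Set}, which is NP-hard. Since the construction $G \mapsto \widehat{G}$ runs in polynomial time and doubles the vertex count, the reduction itself is polynomial.

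The main obstacle will be the lower bound. The scramble order is the minimum of a hitting number and a minimum vertex-cut over pairs of eggs, so certifying a high order requires simultaneously ruling out small transversals and small separators. Engineering the eggs of $\mathcal{S}$ so that the bottleneck is exactly $2m - \alpha(G)$, rather than slipping below at some spurious cut through the adjoined vertices, is where the combinatorial creativity in the construction of $\widehat{G}$ has to be spent; the upper-bound direction by contrast should be a fairly direct chip-firing computation.
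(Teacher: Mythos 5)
Your overall reduction scheme matches the paper's: build $\widehat{G}$ on $2m$ vertices from $G$, prove $\sn(\widehat{G})=2m-\alpha(G)$, and conclude NP-hardness because $\alpha$ is NP-hard to compute and the construction is polynomial. Your upper bound is also essentially the paper's: the divisor supported on the complement of a maximum independent set of $\widehat{G}$ (which is exactly the $m$ adjoined vertices together with the $m-\alpha(G)$ non-independent vertices of $G$) has positive rank and degree $2m-\alpha(G)$, and $\sn\leq\gon$ finishes that direction. However, the lower bound --- which you yourself flag as ``the main obstacle'' and where ``the combinatorial creativity has to be spent'' --- is precisely the content of the proof, and your proposal does not supply it. Saying the eggs are ``indexed by combinatorial pieces keyed to independent sets in $G$'' is a placeholder, not a construction, so as written the argument has a genuine gap at its central step.

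The two missing ideas are concrete. First, the construction: $\widehat{G}$ is the $m$-th cone over $G$, i.e., the $m$ new vertices are joined to \emph{every} other vertex (including each other); this forces $\delta(\widehat{G})\geq m+1=\lfloor n/2\rfloor+1$ while leaving $\alpha$ unchanged. Second, the scramble is simply the \emph{edge scramble}, whose eggs are the edges of $\widehat{G}$: its hitting sets are exactly the vertex covers, so the hitting number is $n-\alpha(\widehat{G})=2m-\alpha(G)$ with no further engineering needed. The only thing left to rule out is a small egg-cut, and this is where the minimum-degree hypothesis does all the work: for an egg-cut $(A,A^C)$ with $2\leq|A|\leq\lfloor n/2\rfloor$ one has $|E(A,A^C)|\geq\delta|A|-|A|(|A|-1)$, and concavity of $f(x)=\delta x-x(x-1)$ reduces the bound to checking the endpoints $x=2$ and $x=\lfloor n/2\rfloor$, each giving at least $n-1\geq n-\alpha$. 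Without these two ingredients (or substitutes for them) the minimum egg-cut could in principle dip below $2m-\alpha(G)$, exactly the failure mode you worry about. One small correction as well: the egg-cut quantity $e(\mathcal{S})$ is measured in \emph{edges} between $A$ and $A^C$, not as a vertex cut as your proposal states.
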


\begin{theorem}\label{theorem:NP_hard_gonality}
Computing the gonality of a simple graph is NP-hard.
\end{theorem}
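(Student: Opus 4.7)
The plan is to use the reduction announced immediately before the theorem statement: from any simple graph $G$ on $m$ vertices build, in polynomial time, a simple graph $\widehat{G}$ on $2m$ vertices with
\[
\sn(\widehat{G})=\gon(\widehat{G})=2m-\alpha(G).
\]
Given this identity, if $\gon(\widehat G)$ were computable in polynomial time then so would be $\alpha(G)=2m-\gon(\widehat G)$, contradicting the NP-hardness of computing the independence number of a simple graph. Since $\widehat G$ is itself simple by construction, this shows that gonality is NP-hard on the class of simple graphs, which is the content of the theorem.

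Because $\sn(\widehat G)\leq \gon(\widehat G)$ is already guaranteed by \cite{harp2020new}, the equality above reduces to two independent inequalities that together sandwich both invariants. The upper bound $\gon(\widehat G)\leq 2m-\alpha(G)$ I would establish by exhibiting an explicit positive-rank divisor of degree $2m-\alpha(G)$: the natural candidate places one chip on every vertex of $\widehat G$ except on the $\alpha(G)$ copies corresponding to a maximum independent set of $G$. Verifying positive rank should be a direct chip-firing calculation, handled by Dhar's burning algorithm or by chasing firing moves gadget by gadget, with the independence of the chosen set ensuring that any debt placed on an uncharged vertex can be absorbed.

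The matching lower bound $\sn(\widehat G)\geq 2m-\alpha(G)$ is the main obstacle. One must exhibit a scramble on $\widehat G$ whose order meets that threshold, which by definition requires both of the two quantities controlling the scramble order (the hitting number and the egg-cut) to be at least $2m-\alpha(G)$ simultaneously. I would attempt a scramble whose eggs are indexed by vertices of $G$ and positioned so that any transversal missing some eggs forces the corresponding vertex set in $G$ to be independent; the combinatorics of $\alpha(G)$ then transfers cleanly into one side of the bound, while the doubled structure of $\widehat G$, with a dedicated twin for each vertex, is used to control the other. Balancing both sides of the scramble order to land at the same value $2m-\alpha(G)$ is the delicate step, and once it is in place the chain
\[
2m-\alpha(G)\leq \sn(\widehat G)\leq \gon(\widehat G)\leq 2m-\alpha(G)
\]
forces equality throughout and completes the reduction.
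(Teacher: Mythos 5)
Your reduction skeleton matches the paper's: build $\widehat{G}$ with $\sn(\widehat{G})=\gon(\widehat{G})=2m-\alpha(G)$ in polynomial time, and conclude NP-hardness from the NP-hardness of independence number. The upper bound via one chip on each vertex outside a maximum independent set is also exactly what the paper uses. But the proof has a genuine gap at precisely the point you flag as ``the main obstacle'': you never specify the construction of $\widehat{G}$, and the scramble you sketch cannot work. A scramble whose eggs are indexed by the $m$ vertices of $G$ has at most $m$ eggs, so its hitting number is at most $m$ (choose one vertex from each egg); but the target order $2m-\alpha(G)$ is strictly greater than $m$ whenever $G$ has at least one edge, since then $\alpha(G)\leq m-1$. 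So no such scramble can certify the lower bound, and the chain of inequalities you want never closes.

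The paper's route is different in both the construction and the scramble. It takes $\widehat{G}$ to be the $m^{th}$ cone over $G$ ($m$ new universal vertices), which forces $\delta(\widehat{G})\geq m+1=\lfloor n/2\rfloor+1$ on $n=2m$ vertices while preserving $\alpha$. The scramble is the \emph{edge scramble}: one egg per edge of $\widehat{G}$. Its minimum hitting set is a minimum vertex cover, of size $n-\alpha(\widehat{G})=2m-\alpha(G)$ by K\H{o}nig-free counting (complement of a maximum independent set), and the minimum-degree hypothesis is what guarantees every egg-cut has size at least $n-1\geq n-\alpha(G)$, via a concavity estimate on $\delta|A|-|A|(|A|-1)$. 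Both of these ingredients --- many more eggs than vertices of $G$, and a degree condition to kill small egg-cuts --- are absent from your sketch, and without them the lower bound, and hence the theorem, is not established.
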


\begin{theorem}\label{theorem:NP_hard_metric}
Computing the gonality of a metric graph is NP-hard.
\end{theorem}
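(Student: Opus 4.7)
The plan is to reuse the construction underlying Theorems~\ref{theorem:NP_hard_scramble} and~\ref{theorem:NP_hard_gonality}. Given a simple graph $G$ on $m$ vertices, I would form the auxiliary graph $\widehat{G}$ on $n=2m$ vertices promised in the introduction, and then build a metric graph $\widehat{\Gamma}$ by assigning positive edge lengths to $\widehat{G}$ (unit lengths will suffice, and the argument should be insensitive to the choice). The target is to prove $\mathrm{gon}(\widehat{\Gamma}) = 2m - \alpha(G)$, after which a polynomial-time algorithm for metric gonality would yield one for the NP-hard independent set problem and the theorem would follow.

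The upper bound $\mathrm{gon}(\widehat{\Gamma}) \le 2m - \alpha(G)$ should be immediate from the corresponding finite statement: a positive-rank divisor on the finite graph $\widehat{G}$, supported at its vertices, induces a divisor of the same degree and at least the same rank on any metric realization, because finite chip-firing moves form a special case of metric chip-firing moves.

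The substantive step is the lower bound. My approach is to invoke the standard identity that $\mathrm{gon}(\widehat{\Gamma})$ equals the minimum of $\mathrm{gon}(G')$ taken over all finite subdivisions $G'$ of $\widehat{G}$, and then to show that the scramble on $\widehat{G}$ of order $2m-\alpha(G)$ that witnesses Theorem~\ref{theorem:NP_hard_scramble} lifts to a scramble of the same order on every such $G'$. Concretely, each egg extends to a connected subgraph of $G'$ by adjoining all subdivision vertices lying on edges inside the egg; the hitting number should only grow under subdivision, since additional degree-$2$ vertices do not offer cheaper ways to block the eggs, and the minimum edge-connectivity between pairs of eggs is preserved because subdividing an edge does not change the size of any cut separating two vertex sets.

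The main obstacle will be the detailed bookkeeping needed to confirm the subdivision-invariance of the specific scramble used for $\widehat{G}$, in particular checking that every pair of lifted eggs retains its original edge-connectivity and that no new cheap hitting set emerges on a refinement. Once these verifications are in place, the reduction $G \mapsto \widehat{\Gamma}$ is polynomial in $|V(G)|$ and $|E(G)|$, and NP-hardness of metric gonality then follows directly from NP-hardness of computing $\alpha(G)$.
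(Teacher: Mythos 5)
Your proposal follows essentially the same route as the paper: reduce from independent set via the cone construction $\widehat{G}$, get the lower bound $2m-\alpha(G)\leq\gon(\widehat{\Gamma})$ by pushing the scramble on $\widehat{G}$ through to finite subdivisions, and get the matching upper bound from the divisor supported on the complement of a maximum independent set. Two remarks. First, the ``detailed bookkeeping'' you anticipate for the lower bound is unnecessary: the paper simply combines \cite[Theorem 5.1]{treewidth} (some subdivision $H$ of $\widehat{G}$ has $\gon(H)\leq\gon(\widehat{\Gamma})$) with \cite[Proposition 4.4]{harp2020new} ($\sn$ is invariant under subdivision), so $\sn(\widehat{G})=\sn(H)\leq\gon(H)\leq\gon(\widehat{\Gamma})$ without ever tracking how your particular edge scramble deforms. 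Second, and this is the one genuine soft spot: your upper bound as stated does not quite follow from ``finite chip-firing is a special case of metric chip-firing.'' That observation only shows that debt placed at a \emph{vertex} of $\widehat{G}$ can be cleared; on a metric graph you must also clear debt placed at interior points of edges, so positive rank does not follow from the finite statement alone. You need the fact that $V(\widehat{G})$ is a rank-determining set \cite[Theorem 1.6]{rank_determining}, which is exactly what the paper's Lemma \ref{lemma:metric_upper_bound} invokes (that lemma also constructs the clearing tropical rational functions directly, which is why the paper's Corollary \ref{corollary:metric} holds for \emph{arbitrary} edge lengths, whereas you restrict to unit lengths --- harmless for NP-hardness, but a weaker statement). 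With that citation added, your argument closes.
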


Although it was already shown in \cite{graph_gonality_is_hard} that the gonality of a (non-metric) multigraph is NP-hard to compute, the argument constructed a multigraph \(\widehat{G}\) with many parallel edges, thus leaving open the possibility that gonality on simple graphs could be computationally easier.  Our result closes this possibility.

The other major topics in our paper are the scramble number and the gonality of the Cartesian product \(G\square H\) of two connected graphs \(G\) and \(H\). The gonality of particular product graphs has been studied in \cite{glued_grid_graphs,db}, with a more unified approach taken in \cite{gonality_product}, which provided the following upper bound:
\[\textrm{gon}(G\square H)\leq \min(|V(G)|\gon(H),|V(H)|\gon(G)).\]
We provide several lower bounds on the scramble number of \(G\square H\) in Theorem \ref{theorem:sn_lower_bound_general}, Corollary \ref{corollary:k=1}, and Proposition \ref{proposition:k=2}.  When one of these lower bounds on scramble number equals the upper bound on gonality, these bounds are equal to both scramble number and gonality.  We provide many instances of product graphs where this allows us to determine graph gonality, including the product of any graph with any sufficiently large tree (see Theorem \ref{theorem:product_gon}(i)). Table \ref{table:gonalities} summarizes most of the product graphs whose gonalities we determine in Section \ref{section:applications_to_gonality}.  In this table, \(G\) and \(H\) can be any connected graphs; \(T\) is a tree; \(K_n\) is a complete graph on \(n\) vertices; \(K_{m,n}\) is a complete bipartite graph on \(m\) and \(n\) vertices; and \(C_m\) is a cycle graph on \(m\) vertices.

\begin{table}[hbt]
\begin{center}
 \begin{tabular}{|c c c c |} 
 \hline
 \(G\) & \(H\) & \textrm{Assumptions}&\(\textrm{gon}(G\square H\)) \\
 \hline
  \(T\) & \(H\) & \(\lambda(H)=\gon(H)=k\) & \(\min(|V(H)|,k|V(G)|)\)) \\ 
    \(G\) & \(K_n\square T\) & \(|V(G)|\leq |V(T)|\) & \(n|V(G)|\)
\\ 
   \(G\) & \(H\) & \(\lambda(H)=\gon(H)=2,|V(G)|\leq |V(H)|/2\) & \(2|V(G)|\)
\\ 
   \(G\) & \(K_{m,n}\) & \(|V(G)|\leq (m+n)/m\) & \(|V(G)|\cdot\min(m,n)\)
\\ 
   \(C_m\) & \(K_n\) & \(m,n\geq 2\) & \(\min(2n,m(n-1))\)
\\ 
   \(G\) & \(H\) & \(\kappa(G)=\kappa(H)=\gon(G)=\gon(H)=2\) & \(2\min(|V(G)|,|V(H)|)\)
\\ 
   \(C_\ell\square C_m\) & \(C_n\) & \(\ell,m\geq 3,\frac{2}{3}\ell m\leq n\) & \(2\ell m\)
\\ 
   \(K_k\square T\) & \(K_\ell\) & \(k<\ell\leq k(|T|-2)+4\) & \(k\ell\)
\\ 
 \hline
\end{tabular}
\caption{Graph gonalities computed in Section \ref{section:applications_to_gonality}}
\label{table:gonalities}
\end{center}
\end{table}

Our paper is organized as follows.  In Section \ref{section:preliminaries} we present background and several lemmas on graph theory, scramble number, and chip-firing games and gonality.  In Section \ref{section:complexity} we present our results on computational complexity. In Section \ref{section:lower_bound} we provide our lower bounds on scramble number for product graphs, which we apply to study graph gonality in Section \ref{section:applications_to_gonality}.

\medskip

\noindent \textbf{Acknowledgements:} The authors are grateful for the support they received from the Williams SMALL REU, and from NSF grants DMS-1659037 and DMS-2011743.  They also thank thank Benjamin Bailey, David Jensen, and Noah Speeter for insightful conversations on scramble number, and for helpful comments on drafts of this paper.

\section{Preliminaries}
\label{section:preliminaries}

In this section we establish terminology, notation, and background results.  We start with standard results and definitions from graph theory, including such invariants as connectivity and treewidth.  We then present scrambles on graphs, followed by divisor theory and chip-firing.

\subsection{Graphs}  Throughout this paper by a graph $G$ we mean a pair $(V,E)$ of a finite vertex set $V$ and a finite edge multiset $E$, with multiple edges allowed between vertices although not from a vertex to itself.  In certain sections we will assume that our graphs are connected, and will explicitly state this at the start.  We refer to the number of edges incident to a vertex \(v\) as the \emph{valence}\footnote{This is usually referred to as the \emph{degree} of a vertex; however, we need to reserve the term ``degree'' for divisor theory.} of \(v\), denoted \(\textrm{val}(v)\).  Given disjoint subsets \(A,B\subset V(G)\), we denote by \(E(A,B)\subset E(G)\) the multiset of edges of \(G\) with one vertex in \(A\) and the other vertex in \(B\).

We now recall three possible relations between two graphs:  the subgraph relation, the minor relation, and the immersion minor relation.  We say that \(H\) is a \emph{subgraph} of \(G\) if \(H\) can be obtained from \(G\) by deleting vertices and edges.  Given a subset \(S\subset V(G)\), the subgraph \(H\) \emph{induced} by \(S\) is the graph with vertex set \(S\) and all edges in \(E(G)\) connecting vertices in \(S\).  If the subgraph induced by \(S\) is connected, we say that \(S\) is a \emph{connected subset} of \(V(G)\).

We say that \(H\) is a \emph{minor} of \(G\) if \(H\) can be obtained from \(G\) by deleting vertices, deleting edges, and contracting edges.  A family of graphs \(\mathcal{G}\) is called \emph{minor closed} if any minor of a graph in \(\mathcal{G}\) is also in \(\mathcal{G}\); and a graph invariant \(f\) is called \emph{minor monotone} if \(f(H)\leq f(G)\) whenever \(H\) is a minor of \(G\).

For the immersion minor relation, we first need to define lifts of graphs.  If \(u,v,w\in V(G)\) where \(u,v,w\) are all distinct and \(uv,vw\in E(G)\), the \emph{lift of \(G\) at \(v\) with respect to \(u\) and \(w\)} is the graph obtained by deleting the edges \(uv\) and \(vw\) and adding an edge \(uw\).  Since our graphs need not be simple, we remark that there could be multiple edges between \(u\) and \(v\), or multiple edges between \(v\) and \(w\); in this case only one copy of each edge is deleted. If \(H\) can be obtained from \(G\) by a taking a subgraph and performing a sequence of lifts, we say \(H\) is an \emph{immersion minor} of \(G\). A family of graphs \(\mathcal{G}\) is called \emph{immersion minor closed} if any immersion minor of a graph in \(\mathcal{G}\) is also in \(G\); and a graph invariant \(f\) is called \emph{immersion minor monotone} if \(f(H)\leq f(G)\) whenever \(H\) is an immersion minor of \(G\).

We now recall several graph invariants.  Given a graph $G$ with vertex set $V(G)$ and edge set $E(G)$, a \emph{vertex cut} of $G$ is a subset $S \subsetneq V(G)$ such that the removal of the vertices in $S$ yields at least two connected components in the resulting graph $G'$ (or the graph on a single vertex). Similarly an \emph{edge cut} $D \subset E(G)$ is a multiset for which removing the edges in $D$ results in a disconnected graph (or the graph on a single vertex). The \emph{vertex-connectivity} (or simply the \emph{connectivity}) of a graph is the smallest cardinality of a vertex cut, and is denoted \(\kappa(G)\); and similarly the edge-connectivity of a graph is the smallest cardinality of an edge cut, and is denoted \(\lambda(G)\). We remark that \(\kappa(G)\leq\lambda(G)\), and that if \(G\) is not connected then \(\kappa(G)=\lambda(G)=0\).  We say a graph \(G\) is \emph{\(k\)-vertex connected} if \(\kappa(G)\geq k\), and that \(G\) is \emph{\(k\)-edge connected} if \(\lambda(G)\geq k\).  A key result about the two versions of connectivity is the following.

\begin{theorem}[Menger's Theorem, \cite{Menger1927}]\label{theorem:mengers}
A graph is $k$-vertex connected if and only if every pair of vertices has at least $k$ vertex-disjoint paths between them.  A graph is $k$-edge connected if and only if every pair of vertices has $k$ edge-disjoint paths between them.
\end{theorem}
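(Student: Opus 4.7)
The plan is to derive both claims from the local form of Menger's theorem: for distinct vertices $u,v$ in $G$, the minimum size of a $u$-$v$ vertex (resp.\ edge) separator equals the maximum number of internally vertex-disjoint (resp.\ edge-disjoint) $u$-$v$ paths. The theorem statement then follows by quantifying over all pairs, since $\kappa(G)$ (resp.\ $\lambda(G)$) equals the minimum such separator size over all non-adjacent pairs. In each case one direction is immediate: $k$ pairwise-disjoint paths force every separator to have at least $k$ elements, since each path must meet the separator in a distinct element.

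For the converse in the vertex setting, I would induct on $|E(G)|$, fixing non-adjacent $u,v$ and letting $S$ be a minimum $u$-$v$ separator of size $k$. The key case is when there is a minimum separator $S$ for which both components $U \ni u$ and $W \ni v$ of $G - S$ contain more than one vertex. Here I form the smaller graphs $G_u$ (induced on $U \cup S$ together with an auxiliary vertex $v^*$ adjacent to all of $S$) and $G_v$ (constructed analogously with $u^*$), each with strictly fewer edges and with minimum separator size still $k$ between the distinguished pair; induction then yields $k$ internally disjoint paths in each, and matching these through $S$ produces $k$ internally disjoint $u$-$v$ paths in $G$. The remaining case, in which every minimum separator consists of $u$'s or $v$'s neighborhood, is handled by deleting a carefully chosen edge incident to $u$ and running induction on the smaller graph while tracking the separator structure. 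Adjacent pairs $u,v$ are handled by applying the non-adjacent result to $G - uv$ and reincorporating $uv$ as an additional path.

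For the edge version I would subdivide each edge of $G$ once, producing a graph $G'$ in which each original edge is replaced by a length-two path through a new midpoint vertex. Edge-disjoint $u$-$v$ paths in $G$ correspond bijectively to internally vertex-disjoint $u$-$v$ paths in $G'$, and any minimum vertex cut of $G'$ can be taken to consist entirely of midpoints (trading out any original vertex in the cut for an adjacent midpoint without increasing size), so minimum edge cuts in $G$ match minimum vertex cuts in $G'$. Applying the already-established vertex form of Menger's theorem to $G'$ then delivers the edge form for $G$.

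The main obstacle is the gluing step in the vertex induction: one must verify that concatenating the two families of internally disjoint paths from $G_u$ and $G_v$ through the shared separator $S$ produces genuinely internally disjoint $u$-$v$ paths. This is arranged by observing that each path in each family uses exactly one vertex of $S$ as its only $S$-vertex, allowing a canonical pairing between the two families that sends a $G_u$-path ending at $s \in S$ to the unique $G_v$-path starting at $s$, and then invoking injectivity of this pairing to conclude internal disjointness after concatenation.
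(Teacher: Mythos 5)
This statement is a classical theorem that the paper cites to Menger's 1927 work and does not prove, so there is no internal proof to compare against; your proposal has to stand on its own. The vertex-connectivity half of your sketch is essentially the standard textbook induction (minimum separator $S$, auxiliary graphs $G_u$ and $G_v$ with apex vertices, fans through $S$, gluing via the pigeonhole fact that each of the $k$ internally disjoint paths meets $S$ exactly once), and that part is sound as a sketch, modulo the usual care in the degenerate case and in passing from the local statement to the global one for adjacent pairs and complete graphs.

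The edge-connectivity half, however, has a genuine gap: the subdivision reduction does not work. If two edge-disjoint $u$--$v$ paths in $G$ share an internal vertex $w$, their images in the subdivision $G'$ both pass through $w$, so they are \emph{not} internally vertex-disjoint in $G'$; the claimed bijection fails. Worse, the asserted equality of minimum cuts fails outright: take $G$ with vertices $u,v,w,a,b$ and edges $uw, wv, ua, aw, wb, bv$. Every $u$--$v$ path passes through $w$, so $\{w\}$ is a vertex cut of size $1$ in $G'$, yet the minimum $u$--$v$ edge cut of $G$ has size $2$ (no single edge disconnects $u$ from $v$). Your proposed repair --- trading an original vertex in the cut for an adjacent midpoint --- also fails, since removing one midpoint blocks only the paths using that particular edge, not all paths through $w$. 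The standard reduction of the edge version to the vertex version instead uses the line-graph-style auxiliary graph $H$ with vertex set $E(G)\cup\{u,v\}$, where two edges of $G$ are adjacent in $H$ when they share an endpoint and $u$ (resp.\ $v$) is joined to the edges incident to it: there, edge-disjoint $u$--$v$ paths in $G$ do correspond to internally disjoint $u$--$v$ paths in $H$, and $u$--$v$ vertex cuts of $H$ are exactly $u$--$v$ edge cuts of $G$. Alternatively, the edge version admits a direct induction on $|E(G)|$ by deleting or contracting an edge. As written, your derivation of the second sentence of the theorem does not go through.
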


Another well-studied graph invariant for connected graphs is the \emph{treewidth} of a graph\footnote{For a disconnected graph, the treewidth is the maximum treewidth of any connected component.}.  Treewidth is the minimum width of any tree decomposition of the graph, as defined in \cite{treewidth_original}. We can also define treewidth in terms of brambles.  A \emph{bramble} on a graph is a collection \(\mathcal{B}\) of subsets \(B\subset V(G)\) such that each \(B\) is connected, as is \(B\cup B'\) for any \(B,B'\in \mathcal{B}\).  A set \(C\subset V(G)\) is called a \emph{hitting set} for \(\mathcal{B}\) if \(C\cap B\neq \emptyset\) for all \(B\in \mathcal{B}\); and the \emph{order} of \(\mathcal{B}\), denoted \(||\mathcal{B}||\) is the minimum size of a hitting set.  The treewidth of a graph is then one less than the maximum order of a bramble by \cite{seymour-thomas}.

Treewidth is a minor monotone function, so that if \(H\) is a minor of \(G\) then we have \(\tw(H)\leq \tw(G)\).  The set of all graphs of treewidth at most \(k\) is thus a minor-closed family, and admits a characterization by a finite forbidden set of minors.  For instance, graphs of treewidth at most \(2\) have the complete graph \(K_4\) as the unique forbidden minor.

We close this subsection by recalling a graph operation.  Given two graphs \(G\) and \(H\), the \emph{Cartesian product} \(G\square H\) is the graph with vertex set \(V(G)\times V(H)\), where \((u_1,v_1)\) is incident to \((u_2,v_2)\) with \(e\) edges if and only if \(u_1=u_2\) and \(v_1\) and \(v_2\) are connected by \(e\) edges, or  \(v_1=v_2\) and \(u_1\) and \(u_2\) are connected by \(e\) edges.  Given a represetation of the a graph as \(G\square H\), we refer to subgraphs of the form \(u\square H\) as  \emph{canonical copies of \(H\)}, to subgraphs of the form \(G\square v\) as  \emph{canonical copies of \(G\)}.  Note that there are \(|V(G)|\) canonical copies of \(H\), and \(|V(H)|\) canonical copies of \(G\), even though there may be more subgraphs of \(G\square H\) isomorphic to \(G\) or to \(H\).

\subsection{Scrambles}

We now present the definition of and background on scrambles, which were introduced in \cite{harp2020new} as a generalization of brambles.  A \emph{scramble}  \(\mathcal{S}=\{E_1,\ldots,E_s\}\) on a graph \(G\) is a collection of connected (and nonempty) subsets \(E_i\) of \(G\). The elements of \(\mathcal{S}\) are called \emph{eggs}. The \emph{scramble order} of a scramble \(\mathcal{S}\) is the maximum integer \(k\) such that:
\begin{itemize}
    \item[(i)] no set \(C\) of size less than \(k\) covers \(\mathcal{S}\), and
    \item[(ii)] if \(A\subset V(G)\)   such that there exist eggs \(E_i,E_j\in \mathcal{S}\) with \(E_i\subset A\) and \(E_j\subset A^C\), then \(|E(A,A^C)|\geq k\).
\end{itemize}
The scramble order of \(\mathcal{S}\) is written \(||\mathcal{S}||\).

As with brambles, a set \(C\) covering \(\mathcal{S}\) is called a hitting set for \(\mathcal{S}\). We call a set \(A\) satisfying condition (ii) an \emph{egg-cut} of \(\mathcal{S}\), and we refer to \(|E(A,A^C)|\) as the \emph{size} of the egg-cut.  We then have that \(||\mathcal{S}||=\min(h(\mathcal{S}),e(\mathcal{S}))\), where
\begin{itemize}
    \item \(h(\mathcal{S})\) is the minimum size of a hitting set for \(\mathcal{S}\), and
    \item \(e(\mathcal{S})\) is the smallest size of an egg-cut of \(\mathcal{S}\).
\end{itemize}
An example of a scramble on the cube graph \(Q_3\) is illustrated in Figure \ref{fig:Cube Scramble}, where \(\mathcal{S}=\{E_1,E_2,E_3,E_4\}\) and each \(E_i\) consists of two circled vertices.  Since the four eggs are disjoint, we have \(h(\mathcal{S})=4\).  We claim that \(e(\mathcal{S})=4\).  Certainly there exists an egg-cut of size \(4\), such as choosing \(A\) to be a single egg.  Indeed, for any set \(A\) with \(E_i\subset A\) and \(E_j\subset A^C\), then there must be at least two edges connecting \(A\) and \(A^C\) on both the inner and outer four-cycle of the graph.  Thus \(||\mathcal{S}||=4\).

\begin{figure}[hbt]
    \centering
    \includegraphics{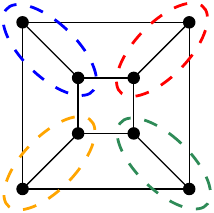}
    \caption{A scramble of order four on the cube graph}
    \label{fig:Cube Scramble}
\end{figure}

The \emph{scramble number} of a graph \(G\) is the maximum order of any scramble on \(G\). Thus the cube graph \(Q_3\) has \(\textrm{sn}(Q_3)\geq 4\); we will see later that in fact \(\textrm{sn}(Q_3)= 4\), so there exists no scramble of higher order on \(Q_3\).  By \cite[Theorem 1.1]{harp2020new}, the scramble number of a graph is at least as large as its treewidth:
\[\tw(G)\leq \sn(G).\]

Originally, scrambles and scramble numbers were defined on connected graphs.  In this paper we allow possibly disconnected graphs; fortunately, we have the following result to help us understand how scramble numbers of graphs related to those of connected components.  In particular, it turns out an optimal scramble must have all its eggs in one basket.

\begin{lemma}\label{lemma:eggs_in_one_basket}
If \(G\) is a graph with connected components \(G_1,\ldots,G_c\), then
\[\textrm{sn}(G)=\max_i\{\textrm{sn}(G_i)\}\]
\end{lemma}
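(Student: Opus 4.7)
The plan is to prove both inequalities separately. The direction $\sn(G) \geq \max_i \sn(G_i)$ is almost immediate by transporting an optimal scramble from a single component to all of $G$; the direction $\sn(G) \leq \max_i \sn(G_i)$ relies on the key observation that any scramble whose eggs straddle two distinct components of $G$ admits a size-zero egg-cut and hence has order zero.

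For the first inequality I would let $j$ be an index achieving $\max_i \sn(G_i)$ and let $\mathcal{S}$ be an optimal scramble on $G_j$, so $||\mathcal{S}||_{G_j} = \sn(G_j)$. Since each egg of $\mathcal{S}$ is a connected subset of $G_j$ and hence of $G$, we may regard $\mathcal{S}$ as a scramble on $G$. A minimum hitting set may as well be taken inside $V(G_j)$, where all the eggs live, so $h_G(\mathcal{S}) = h_{G_j}(\mathcal{S})$. For any $A \subseteq V(G)$ with an egg of $\mathcal{S}$ on each side, the restriction $A \cap V(G_j)$ is an egg-cut of $G_j$, and $|E_G(A, A^C)| = |E_{G_j}(A \cap V(G_j), V(G_j) \setminus A)|$ because no edges of $G$ run between distinct components. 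Thus $e_G(\mathcal{S}) = e_{G_j}(\mathcal{S})$, and therefore $||\mathcal{S}||_G = ||\mathcal{S}||_{G_j} = \sn(G_j)$.

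For the second inequality I would start from an arbitrary scramble $\mathcal{S}$ on $G$. Since every egg is connected, each egg sits inside a single component. If $\mathcal{S}$ has eggs $E \subseteq V(G_i)$ and $E' \subseteq V(G_{i'})$ with $i \neq i'$, then $A := V(G_i)$ satisfies $E \subseteq A$ and $E' \subseteq A^C$, and $|E_G(A,A^C)| = 0$ because no edges run between components; hence $e(\mathcal{S}) = 0$ and $||\mathcal{S}|| = 0 \leq \max_i \sn(G_i)$. Otherwise all eggs of $\mathcal{S}$ lie in a single component $G_j$, and the hitting-set/egg-cut bookkeeping from the previous paragraph applies in reverse to give $||\mathcal{S}||_G = ||\mathcal{S}||_{G_j} \leq \sn(G_j) \leq \max_i \sn(G_i)$.

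Combining the two bounds yields the lemma. There is no genuinely hard step: once one spots that an egg-cut separating two components has no edges crossing it, the entire argument reduces to checking that hitting sets and egg-cuts transport between $G$ and $G_j$ without changing size, which is immediate from the absence of edges between components.
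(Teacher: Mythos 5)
Your proof is correct and follows essentially the same route as the paper's: the crux in both is that a scramble with eggs in two distinct components admits an egg-cut of size zero and hence has order zero, while a scramble confined to one component transports to and from that component with identical hitting sets and egg-cuts. Your write-up is merely more explicit about separating the two inequalities.
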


\begin{proof}
Let \(\mathcal{S}\) be a scramble, and suppose that it has eggs in two different components, say \(E_i\subset G_i\) and \(E_j\subset G_j\).  Then \(A=G_i\) is an egg-cut for \(A\), but \(|E(G_i,G_i^C)|=0\), so \(||\mathcal{S}||=0\).

Thus for \(\mathcal{S}\) to be a scramble of positive order, all its eggs must be contained in a single graph \(G_i\).  This can be viewed as a scramble on \(G_i\), and in fact it has the same order as the corresponding scramble on \(G_i\):  certainly the minimal hitting set in the same; and minimal egg-cuts must all be of the form \(A\) where \(A\) is a proper nonempty subset of \(G_i\), and no edges outside of \(G_i\) contribute to a minimal egg-cut.  Thus the best we can do for \(G\) is to construct the largest order possible scramble on one connected component of \(G\), giving the claimed equality.
\end{proof}

We can similarly determine scramble number based on subgraphs in the event that a graph is only \(1\)-edge-connected. To do this, we need the following result.

\begin{proposition}[{\cite[Proposition 4.3]{harp2020new}}]\label{prop:subgraph}  If \(G\) is a subgraph of \(H\), then \(\textrm{sn}(G)\leq \textrm{sn}(H)\).  
\end{proposition}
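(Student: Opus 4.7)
The plan is to show that any scramble $\mathcal{S}$ on $G$ can be reinterpreted as a scramble on $H$ whose order is at least as large as its order on $G$; taking the maximum over all scrambles on $G$ then gives $\sn(G) \leq \sn(H)$.

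First I would verify that $\mathcal{S} = \{E_1,\ldots,E_s\}$ remains a valid scramble on $H$. Each egg $E_i \subseteq V(G) \subseteq V(H)$, and since $G$ is a subgraph of $H$, every edge of $G$ is an edge of $H$. Hence the subgraph of $H$ induced by $E_i$ contains the subgraph of $G$ induced by $E_i$, so connectedness of eggs is preserved.

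Next I would compare the two order-determining quantities. For the hitting number: if $C \subseteq V(H)$ is a hitting set for $\mathcal{S}$ on $H$, then $C \cap V(G)$ still meets every $E_i$ (since each $E_i \subseteq V(G)$), and is therefore a hitting set for $\mathcal{S}$ on $G$ of no larger size; conversely any hitting set in $G$ is a hitting set in $H$. Thus $h_H(\mathcal{S}) = h_G(\mathcal{S})$. For the egg-cut number: given any egg-cut $A \subseteq V(H)$ of $\mathcal{S}$ on $H$, the set $A' := A \cap V(G)$ satisfies $E_i \subseteq A'$ and $E_j \subseteq V(G) \setminus A'$ for the same eggs that witnessed $A$ being an egg-cut (again using $E_i \subseteq V(G)$), so $A'$ is an egg-cut of $\mathcal{S}$ on $G$. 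Since every edge of $G$ between $A'$ and $V(G) \setminus A'$ is an edge of $H$ between $A$ and $V(H) \setminus A$, we get
\[
\bigl|E_H(A, V(H)\setminus A)\bigr| \;\geq\; \bigl|E_G(A', V(G)\setminus A')\bigr| \;\geq\; e_G(\mathcal{S}).
\]
Taking the minimum over egg-cuts $A$ in $H$ yields $e_H(\mathcal{S}) \geq e_G(\mathcal{S})$.

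Combining these two inequalities,
\[
\|\mathcal{S}\|_H \;=\; \min\bigl(h_H(\mathcal{S}), e_H(\mathcal{S})\bigr) \;\geq\; \min\bigl(h_G(\mathcal{S}), e_G(\mathcal{S})\bigr) \;=\; \|\mathcal{S}\|_G,
\]
so $\sn(H) \geq \|\mathcal{S}\|_H \geq \|\mathcal{S}\|_G$ for every scramble $\mathcal{S}$ on $G$. Taking the supremum over all scrambles on $G$ gives $\sn(H) \geq \sn(G)$, as desired. There is no real obstacle here; the only point requiring a little care is to recognize that, because every egg lies in $V(G)$, intersecting an egg-cut of $H$ with $V(G)$ produces a legitimate egg-cut of $G$ whose edge count can only have shrunk.
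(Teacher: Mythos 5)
Your argument is correct: transporting a scramble on $G$ to $H$ preserves connectedness of eggs, leaves the minimum hitting set size unchanged, and can only increase the minimum egg-cut size, which is exactly the standard proof of this fact. Note that the paper itself does not prove this proposition but simply cites \cite[Proposition 4.3]{harp2020new}, and your write-up matches the argument given there, so there is nothing to add.
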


\begin{lemma}\label{lemma_delete_bridge}
Let \(G\) be a graph with \(\lambda(G)=1\), and let \(e=uv\) be an edge such that deleting \(e\) disconnects \(G\) into \(G_1\) and \(G_2\). Then $\sn(G)=\max(\sn(G_1),\sn(G_2))$
\end{lemma}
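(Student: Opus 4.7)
The inequality $\sn(G) \geq \max(\sn(G_1), \sn(G_2))$ is immediate from Proposition \ref{prop:subgraph}, since $G_1$ and $G_2$ are subgraphs of $G$. For the reverse inequality the plan is to fix an arbitrary scramble $\mathcal{S}$ on $G$ and show $||\mathcal{S}||_G \leq \max(\sn(G_1), \sn(G_2))$. If $||\mathcal{S}||_G \leq 1$ the bound is automatic, so assume $||\mathcal{S}||_G \geq 2$. Taking $A = V(G_1)$ gives $|E_G(A, A^C)| = 1$, so $V(G_1)$ cannot be an egg-cut, which forces either every egg of $\mathcal{S}$ to meet $V(G_1)$ or every egg to meet $V(G_2)$. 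Without loss of generality, assume the former.

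The key construction is a restricted scramble $\mathcal{S}_1 := \{E_i \cap V(G_1) : E_i \in \mathcal{S}\}$ on $G_1$. Each restricted egg $E_i' := E_i \cap V(G_1)$ is nonempty by assumption, and it is connected in $G_1$ because any path inside the connected set $E_i$ can traverse the bridge $uv$ at most once, so any two vertices of $E_i$ lying in $V(G_1)$ are already joined by a path inside $V(G_1)$. The plan is then to prove $||\mathcal{S}||_G \leq ||\mathcal{S}_1||_{G_1}$ by comparing the hitting-set and egg-cut parameters separately. The hitting-set inequality $h_G(\mathcal{S}) \leq h_{G_1}(\mathcal{S}_1)$ is immediate, since any hitting set for $\mathcal{S}_1$ already meets every larger egg $E_i \supseteq E_i'$.

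The egg-cut comparison $e_G(\mathcal{S}) \leq e_{G_1}(\mathcal{S}_1)$ is the crux of the argument. Given a minimum egg-cut $A_1 \subseteq V(G_1)$ for $\mathcal{S}_1$, I would lift it to $A \subseteq V(G)$ by setting $A = A_1 \cup V(G_2)$ if $u \in A_1$ and $A = A_1$ otherwise. Either way, the bridge $uv$ is uncut and no edges of $G_2$ are cut, so $|E_G(A, A^C)| = |E_{G_1}(A_1, V(G_1) \setminus A_1)|$. The delicate step is verifying that $A$ is still an egg-cut for $\mathcal{S}$ in $G$: the key observation is that any egg $E_j$ whose restriction $E_j'$ lies on the $u$-free side of $A_1$ cannot contain $u$, so by connectedness and the fact that $uv$ is the only edge joining $V(G_1)$ and $V(G_2)$, the egg $E_j$ lies entirely in $V(G_1)$ and thus cleanly on the intended side of $A$; the egg on the $u$-containing side is either entirely in $V(G_1)$, in which case it sits inside $A_1 \subseteq A$, or crosses the bridge, in which case it is absorbed by $V(G_2) \subseteq A$. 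Combining the two halves yields $||\mathcal{S}||_G \leq ||\mathcal{S}_1||_{G_1} \leq \sn(G_1) \leq \max(\sn(G_1), \sn(G_2))$, which is the bound required. The main obstacle throughout is the presence of eggs that span both $G_1$ and $G_2$ across the bridge; the resolution is that such eggs restrict to subsets of $V(G_1)$ that are pinned to $u$, which is exactly the structural feature that makes the egg-cut lift succeed.
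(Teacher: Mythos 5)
Your proposal is correct and follows essentially the same route as the paper's proof: the subgraph proposition gives one inequality, and for the other you restrict the scramble to $V(G_1)$ (after disposing of the case where eggs lie wholly in both components, which forces order $1$) and compare hitting sets and egg-cuts separately, lifting a minimal egg-cut of the restricted scramble back to $G$ without changing its size. Your write-up is in fact slightly more explicit than the paper's in verifying connectivity of the restricted eggs and in handling the two cases of the egg-cut lift according to which side contains $u$.
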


\begin{proof}
First we have \(\sn(G)\geq \max(\sn(G_1),\sn(G_2))\) by Proposition \ref{prop:subgraph}, since \(G_1\) and \(G_2\) are both subgraphs of \(G\).

Now let \(\mathcal{S}\) be a scramble on \(G\); we will construct a scramble with at least as large of an order on either \(G_1\) or \(G_2\).  First assume there exist \(E_1,E_2\in \mathcal{S}\) with \(E_1\subset V(G_1)\) and \(E_2\subset V(G_2)\). Then taking \(A=V(G_1)\) gives an egg-cut of size \(1\), yielding \(||\mathcal{S}||=1\).  Since any scramble on \(G_1\) or \(G_2\) has scramble order at least \(1\), we then have at least as large of an order on either \(G_1\) or \(G_2\).

Now assume there exist no such \(E_1\) and \(E_2\).  Without loss of generality, we may assume that every egg of \(\mathcal{S}\) is either a subset of \(V(G_1)\), or contains the vertex \(u\) of the cut-edge \(e=uv\) that is an element of \(V(G_1)\).  Based on the scramble \(\mathcal{S}=\{E_1,\ldots,E_\ell\}\) on \(G\), construct a scramble \(\mathcal{S}'=\{E_1',\ldots,E_\ell'\}\) on \(G_1\) by setting \(E_i'=E_i\cap V(G_1)\).  We claim that \(||\mathcal{S}'||\geq ||\mathcal{S}||\).  First, any hitting set for \(\mathcal{S}'\) is also a hitting set for \(\mathcal{S}\), so \(h(\mathcal{S}')\geq h(\mathcal{S})\).  Next, let \(A\subset V(G_1)\) be a minimal egg-cut for \(\mathcal{S}'\), say with \(E_i'\subset A\) and \(E_j'\subset A^C\). Without loss of generality, assume that \(u\in A^C\).  Then \(A\) will form an egg-cut in \(G\) for \(\mathcal{S}\) with the same number of edges connecting \(A\) and \(A^C\).  Thus we have \(e(\mathcal{S}')\geq e(\mathcal{S})\).  Taken together, these inequalities give us \(||\mathcal{S}'||\geq ||\mathcal{S}||\).

Thus given a scramble on \(G\) we can construct a scramble either on \(G_1\) or \(G_2\) of at least the same order.  This means  \(\sn(G)\leq \max(\sn(G_1),\sn(G_2))\), completing the proof.
\end{proof}

Sadly a similar result does not hold for graphs with \(\kappa(G)=1\), as demonstrated in the following example.
\begin{example}\label{example:wedge_sum_scramble} Given two graphs \(G\) and \(H\) with vertices \(u\in V(G)\) and \(v\in V(H)\), the \emph{wedge sum} of \(G\) and \(H\) at \(u\) and \(v\) is obtained by gluing \(G\) and \(H\) together at the vertices \(u\) and \(v\).  
Let \(G\) be the slashed diamond graph pictured on the left of Figure \ref{figure:wedge_sum}.  The middle and right graphs in the same figure are two non-isomorphic ways of taking the wedge sum of \(G\) with itself.

\begin{figure}[hbt]
    \centering
    \includegraphics{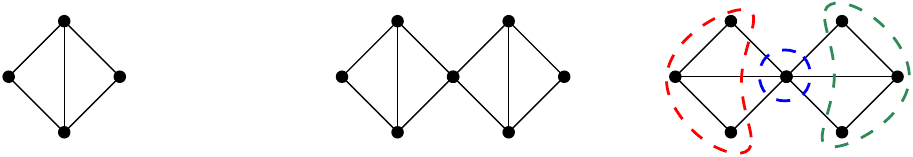}
    \caption{A graph and two ways of taking its wedge sum with itself, yielding different scramble numbers}
    \label{figure:wedge_sum}
\end{figure}

As none of these connected graphs are trees, they have scramble number at least \(2\) by \cite[Corollary 4.7]{harp2020new}; and the rightmost graph has scramble number at least \(3\), as demonstrated by the scramble of order \(3\) illustrated on it.  It turns out that these three graphs have scramble numbers equal to \(2\), \(2\), and \(3\) (from left to right); we will prove this in Example \ref{example:wedge_sum_gonality}.  This demonstrates that the scramble number of a wedge sum cannot be determined from the scramble numbers of the summands.
\end{example}

The next result is a useful lower bound on scramble number.

\begin{lemma}\label{lemma:edge-connectivity}
We have $\textrm{sn}(G)\geq\min(\lambda(G),|V(G)|)$, where $\lambda$ denotes edge-connectivity.
\end{lemma}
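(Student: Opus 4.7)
The plan is to exhibit a single explicit scramble whose order is exactly $\min(\lambda(G),|V(G)|)$. The natural candidate is the \emph{vertex scramble} $\mathcal{S}=\{\{v\}:v\in V(G)\}$, whose eggs are all the singleton vertex sets (each of which is trivially connected and nonempty). Since $\sn(G)\geq \|\mathcal{S}\|=\min(h(\mathcal{S}),e(\mathcal{S}))$, the whole lemma reduces to computing $h(\mathcal{S})$ and $e(\mathcal{S})$ for this choice.

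First I would handle the degenerate case: if $G$ is disconnected then $\lambda(G)=0$ and the inequality $\sn(G)\geq 0$ is vacuous, so I may assume $G$ is connected. For the hitting-set count, note that the eggs $\{v\}$ are pairwise disjoint, so any hitting set must contain at least one vertex per egg, giving $h(\mathcal{S})=|V(G)|$. For the egg-cut count, observe that for every nonempty proper subset $A\subsetneq V(G)$, both $A$ and $A^C$ contain some singleton egg, so \emph{every} such $A$ qualifies as an egg-cut for $\mathcal{S}$. Consequently $e(\mathcal{S})$ equals the minimum of $|E(A,A^C)|$ over all nonempty proper $A$, which is precisely the edge-connectivity $\lambda(G)$.

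Putting these two computations together yields $\|\mathcal{S}\|=\min(|V(G)|,\lambda(G))$, and the inequality $\sn(G)\geq \|\mathcal{S}\|$ finishes the proof. I do not anticipate a serious obstacle here: the only subtle point is matching the egg-cut definition used in the paper (a set $A$ such that some egg lies in $A$ and some other egg lies in $A^C$) with the standard definition of $\lambda(G)$ as the minimum of $|E(A,A^C)|$ over all nonempty proper $A$. Because the vertex scramble places an egg at every vertex, every partition of $V(G)$ into two nonempty parts is automatically an egg-cut, so these two minima coincide, and no extra argument is required.
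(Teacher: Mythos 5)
Your proposal is correct and is essentially identical to the paper's proof: both use the vertex scramble whose eggs are the singletons, observe that the minimum hitting set has size $|V(G)|$ and that every nonempty proper subset is an egg-cut so the minimum egg-cut size is $\lambda(G)$. Your explicit handling of the disconnected case is a minor addition the paper omits, but the argument is the same.
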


\begin{proof}
Consider the scramble on $G$ where each vertex is an egg. The minimum size of a hitting set is $|V(G)|$, and the smallest possible number of edges between any nonempty $A$ and $A^C$ (which will always separate eggs) is $\lambda(G)$.    Thus the order of that scramble is $\min(\lambda(G),|V(G)|)$.
\end{proof}

We close this subsection with a discussion of how scramble number behaves under various graph operations.

\begin{proposition}[Proposition 4.4 in \cite{harp2020new}]
 If \(G\) is a subdivision of \(H\) (that is, can be obtained from \(H\) by adding in \(2\)-valent vertices in the middle of edges of \(H\)), then \(\textrm{sn}(G)= \textrm{sn}(H)\).
\end{proposition}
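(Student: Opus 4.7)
The plan is to prove both inequalities by induction on the number of subdivision vertices, reducing to the case where $G$ is obtained from $H$ by replacing a single edge $uw \in E(H)$ by a length-two path $u$--$v$--$w$ through a new $2$-valent vertex $v$. Throughout I will convert scrambles on one graph to scrambles of at least the same order on the other.

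First I would handle $\textrm{sn}(G) \geq \textrm{sn}(H)$. Given a scramble $\mathcal{S} = \{E_1, \ldots, E_s\}$ on $H$, I set $E_i' := E_i \cup \{v\}$ when $u, w \in E_i$ and $E_i' := E_i$ otherwise; each $E_i'$ is connected in $G$, giving a scramble $\mathcal{S}'$. To show $||\mathcal{S}'|| \geq ||\mathcal{S}||$, I take any hitting set $C' \subseteq V(G)$ for $\mathcal{S}'$ and replace $v$ (if it is used) by $u$; the resulting $C \subseteq V(H)$ still hits every $E_i$, because $v \in E_i'$ forces $u \in E_i$. For egg-cuts, I restrict an egg-cut $A' \subseteq V(G)$ of $\mathcal{S}'$ to $A := A' \cap V(H)$; a short case analysis on the side of the cut containing $v$ and on the placements of $u$ and $w$ shows that the contribution of $uw$ to $|E_H(A, A^C)|$ is bounded by the joint contribution of $uv$ and $vw$ to $|E_G(A', (A')^C)|$, yielding $e(\mathcal{S}) \leq e(\mathcal{S}')$.

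For the reverse inequality $\textrm{sn}(G) \leq \textrm{sn}(H)$, I split on the value of $\textrm{sn}(G)$. If $\textrm{sn}(G) = 1$ then $G$ is a tree, so $H$ is a tree, and $\textrm{sn}(H) = 1$. If $\textrm{sn}(G) = 2$ then $G$ has a cycle, so $H$ has a cycle, giving $\textrm{sn}(H) \geq \textrm{tw}(H) \geq 2$. Otherwise $\textrm{sn}(G) \geq 3$: fix a maximum-order scramble $\mathcal{S}'$ on $G$, and observe that $\{v\}$ cannot be an egg, since the cut $A = \{v\}$ would witness $e(\mathcal{S}') \leq 2 < \textrm{sn}(G)$. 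Now set $E_i := E_i' \setminus \{v\}$; this is nonempty, and $H[E_i]$ is connected, because the only way removing $v$ could disconnect $G[E_i']$ is to separate $u$ from $w$, which are reconnected by the new edge $uw \in E(H)$. The containment $E_i \subseteq E_i'$ makes every hitting set for the resulting scramble $\mathcal{S}$ also a hitting set for $\mathcal{S}'$, so $h(\mathcal{S}') \leq h(\mathcal{S})$; and any egg-cut $A$ for $\mathcal{S}$ extends to an egg-cut $A' = A$ or $A \cup \{v\}$ for $\mathcal{S}'$ (placing $v$ on the same side as $u$), preserving the edge count, so $e(\mathcal{S}') \leq e(\mathcal{S})$. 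Together these yield $||\mathcal{S}'|| \leq ||\mathcal{S}||$, hence $\textrm{sn}(G) \leq \textrm{sn}(H)$.

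The main obstacle is the singleton egg $\{v\}$ in the second direction: without excluding it, the hitting-set conversion fails because no subset of $V(H)$ contains the subdivision vertex $v$. The key observation that dissolves this is that such an egg forces $e(\mathcal{S}') \leq 2$, so the problematic case only affects $\textrm{sn}(G) \leq 2$, a regime handled cleanly by the tree/non-tree dichotomy and the treewidth lower bound above.
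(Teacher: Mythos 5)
The paper does not actually prove this proposition; it quotes it from \cite{harp2020new}, so there is no in-paper proof to compare against and your argument must stand on its own. Your first direction ($\sn(G)\geq \sn(H)$), the hitting-set half of the second direction, the reduction to a single subdivision, and the low-order cases are all fine, as is the exclusion of $\{v\}$ as an egg (though to call $A=\{v\}$ an egg-cut you should first note that some egg avoids $v$; if every egg contained $v$ then $\{v\}$ would be a hitting set of size $1$, which also contradicts $||\mathcal{S}'||\geq 3$).

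The genuine gap is the claim that every egg-cut $A$ of the derived scramble $\mathcal{S}$ on $H$ ``extends to an egg-cut $A'=A$ or $A\cup\{v\}$ for $\mathcal{S}'$.'' Deleting $v$ from the eggs can create \emph{new} separable pairs: if $E_i'\supseteq\{u,v\}$ and $E_j'\supseteq\{v,w\}$ meet only in $v$, then $E_i$ and $E_j$ are disjoint, and a set $A$ with $E_i\subseteq A\ni u$ and $E_j\subseteq A^C\ni w$ is an egg-cut of $\mathcal{S}$ with no counterpart in $\mathcal{S}'$: whichever side receives $v$, one of $E_i',E_j'$ straddles the cut, and if in addition every egg of $\mathcal{S}'$ contained in $A\cup\{v\}$ contains $v$ and every egg contained in $A^C\cup\{v\}$ contains $v$, then neither $A$ nor $A\cup\{v\}$ is an egg-cut of $\mathcal{S}'$ at all. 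So $e(\mathcal{S})\geq e(\mathcal{S}')$ does not follow as written. The weaker statement you actually need, $e(\mathcal{S})\geq ||\mathcal{S}'||$, is true but needs an extra idea. For instance: suppose $A$ is an egg-cut of $\mathcal{S}$ with $u\in A$, $w\in A^C$, and $|E_H(A,A^C)|=c+1<||\mathcal{S}'||$, where $e_1=x_1y_1,\ldots,e_c=x_cy_c$ are the crossing edges other than $uw$ with $x_t\in A$. Both $A$ and $A\cup\{v\}$ are cuts of $G$ of size $c+1<e(\mathcal{S}')$, hence not egg-cuts of $\mathcal{S}'$; since $E_i'\subseteq A\cup\{v\}$ and $E_j'\subseteq A^C\cup\{v\}$, it follows that no egg of $\mathcal{S}'$ lies inside $A$ or inside $A^C$, so every egg either contains $v$ or meets both $A$ and $A^C$, and in the latter case connectivity forces it to contain both endpoints of some $e_t$. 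Then $\{v,x_1,\ldots,x_c\}$ is a hitting set of size $c+1<h(\mathcal{S}')$, a contradiction. With this (or an equivalent repair) inserted, your proof goes through.
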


Scramble number is not well-behaved under other graph relations.  As shown in \cite[Example 4.2]{harp2020new}, it is possible for \(G\) to be a minor of \(H\) with \(\sn(G)>\sn(H)\).  Originally, the authors of the current paper believed that scramble number would behave more nicely under the immersion minor relation; as shown in the following example suggested to the authors by Benjamin Baily, this is unfortunately not the case.

\begin{example}
Consider the graphs \(G\) and \(H\) illustrated in Figure \ref{figure:bens_counterexample}; note that \(G\) is an immersion minor of \(H\) by performing a lift along \(a,c,d\) and then another lift along \(a,d,e\) (these collections of edges are highlighted).  We claim that \(\sn(G)>\sn(H)\), meaning that scramble number is not immersion minor monotone. We first argue that \(G\) has scramble number at least \(3\).  Letting \(\mathcal{S}=\{\{a,e\},\{b,c\},\{d,f\}\}\), we certainly have \(h(\mathcal{S})=3\) since the eggs are disjoint; and \(e(\mathcal{S})=3\) since no \(2\)-edge-cut of \(G\) separates two eggs.  Thus \(\sn(G)\geq 3\).

\begin{figure}
    \centering
    \includegraphics{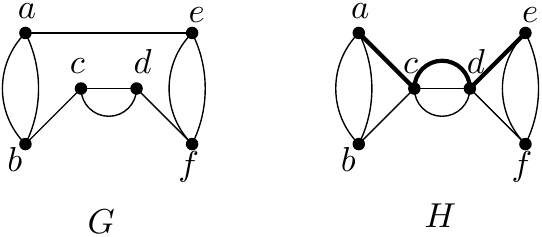}
    \caption{A graph \(G\) of scramble number 3, which is an immersion minor of \(H\), a graph of scramble number 2}
    \label{figure:bens_counterexample}
\end{figure}

We now argue that \(\sn(H)<3\).   First we note that if we delete one of the edges connecting \(c\) and \(d\), the resulting graph \(H'\)  has gonality \(2\) (defined in the next subsection), since the divisor \((a)+(b)\) would have rank \(1\).  It follows that \(H'\) has scramble number \(2\).  Suppose for the sake of contradiction that \(H\) has a scramble of order at least \(3\).  This scramble can be restricted to \(H'\), and must have a lower order; the size of a minimal hitting set cannot drop, so the size of a minimal egg cut must drop instead. The only way an egg cut could decrease in size is if \(c\) is on one side and \(d\) is on the other; it follows that \(A=\{a,b,c\}\) is an egg cut of size \(3\) for the scramble \(\mathcal{S}\) on \(H\), so there exist eggs \(E_1\subset A\) and \(E_2\subset A^C\).  If there exists a third egg \(E_3\subset \{a,b\}\), then \(A=\{a,b\}\) would form an egg cut of size \(2\) separating \(E_3\) from \(E_2\), which is impossible; a similar contradiction occurs if an egg were contained in \(\{e,f\}\).  Thus every egg must contain \(c\) or \(d\); but then \(\{c,d\}\) is a hitting set of size \(2\), a contradiction.  We conclude that \(\textrm{sn}(H)=2\). 
\end{example}

\subsection{Divisors, chip-firing, and gonality on finite graphs}

Let \(G=(V,E)\) be a connected graph, with multiple edges allowed but not loops.

We define a \emph{divisor} \(D\) on \(G\) to be an element of the free abelian group generated by the vertices of \(G\):
\[D=\sum_{v\in V}a_v (v),\,\,\,\,\, a_v\in\mathbb{Z}.\]
We intuitively think of \(D\) as a placement of (integer numbers of) chips on the vertices of \(G\).  The \emph{degree} of a divisor is the sum of the coefficients.  We say that \(D\) is \emph{effective}, written \(D\geq 0\), if every \(a_v\geq 0\). Whenever \(a_v<0\), we refer to \(v\) being ``in debt''.

We can transform one divisor into another by a \emph{chip-firing move}.  For a vertex \(w\), the divisor obtained from \(D=\sum_{v\in V}a_v(v)\) by \emph{chip-firing \(w\)} is
\[D'=(a_w-\textrm{val}(w))(w)+\sum_{v\neq w}(a_v+|E(\{v\},\{w\})|)(v).\]
Thus the vertex \(w\) loses
\(\textrm{val}(w)\) chips, and each neighbor \(v\) of \(w\) gains a number of chips equal to the number of edges connecting \(v\) and \(w\).  We say two divisors \(D_1\) and \(D_2\) are \emph{equivalent}, written \(D_1\sim D_2\), if \(D_2\) can be obtained from \(D_1\) by a sequence of chip-firing moves.

\begin{figure}[hbt]
    \centering
\includegraphics[scale=0.8]{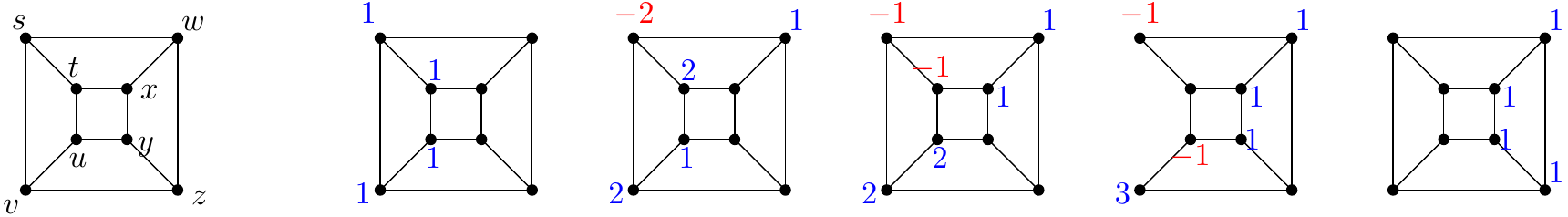}
    \caption{The cube graph with vertices labelled, and five divisors on it that are equivalent to one another}
    \label{figure:cube_chip_firing}
\end{figure}

Figure \ref{figure:cube_chip_firing} illustrates a labelling on the vertices of the cube \(Q_3\), followed by five divisors on the cube \(Q_3\) that are equivalent to one another. We can move from the first divisor \((s)+(t)+(u)+(v)\) to the second divisor \(-2(s)+2(t)+(u)+(v)+(w)\) by chip-firing \(s\), then to the third divisor by chip-firing \(t\), to the fourth by chip-firing \(u\), and finally chip-firing \(v\) to obtain the divisor \((w)+(x)+(y)+(z)\).  Of these five divisors,  only the first and the last are effective.  Sometimes it is convenient to speak of ``simultaneously'' firing a set of vertices, meaning that we fire those vertices in any order; for instance, we can move from the first divisor to the fifth by simultaneously firing the set of vertices \(\{s,t,u,v\}\).  We remark that this notion of simultaneous chip-firing makes it more clear why \(\sim\) is an equivalence relation:  to undo chip-firing a vertex \(q\), we can simultaneously fire the set of vertices \(V(G)\setminus\{q\}\).

We define the \emph{rank} \(r(D)\) of a divisor \(D\) as \(-1\) if \(D\) is not equivalent to an effective divisor; and as \(r\) otherwise, where \(r\) is the maximum integer such that for every effective divisor \(E\) of degree \(r\), the divisor \(D-E\) is equivalent to an effective divisor.  Intuitively, \(r(D)\) is the largest amount of debt that, no matter where it is added to the graph, can be eliminated by \(D\) through chip-firing moves.  The \emph{gonality} of a graph \(G\), denoted \(\gon(G)\) is the minimum degree of a positive rank divisor on \(G\).

Graph gonality is a graph-theoretic version of the gonality of an algebraic curve, which can either be defined in terms of divisor theory on curves, or as the minimum degree of a nondegenerate morphism to a projective line \cite[Section 8C]{algebraic_gonality}.  This leads us to borrow further pieces of terminology from the algebro-geometric world.  For instance, if a graph on at least three vertices has gonality \(2\), we call it \emph{hyperelliptic} as in \cite{bn2}.

We claim that the divisors pictured in Figure \ref{figure:cube_chip_firing} each have positive rank.  As they are equivalent divisors, it suffices to argue this for only one of them.  Consider the leftmost divisor \(D\), and let \(E\) be an effective divisor of degree \(1\).  If \(E=(s),(t),(u),\) or \((v)\), then \(D-E\) is already effective.  If \(E\) is any other divisor, we may perform the series of chip-firing movies pictured in Figure \(3\) to move chips onto the other four vertices, thereby eliminating debt.  Thus \(D-E\) is always equivalent to an effective divisor.  This means \(r(D)\geq 1\), implying that \(\gon(Q_3)\leq \deg(D)=4\).

For any connected graph \(G\) we have
\[\tw(G)\leq \sn(G)\leq \gon(G)\]
by \cite[Theorem 1.1]{harp2020new}. For the running example of the cube graph, we now have \(4\leq\sn(Q_3)\leq \gon(Q_3)\leq 4\), implying that \(\sn(Q_3)=\gon(Q_3)=4\). For \(G\)  a simple graph, we have that edge-connectivity is bounded by treewidth, and so we have  \[\kappa(G)\leq \lambda(G)\leq\tw(G)\leq \sn(G)\leq \gon(G).\]

\begin{example}\label{example:wedge_sum_gonality} Consider the divisors illustrated on the three graphs in Figure \ref{fig:wedge_sum_chips}; these are the same graphs from Example \ref{example:wedge_sum_scramble}.  We claim that these divisors have positive rank.  For the first graph, firing the leftmost vertex moves two chips to the middle two vertices; and firing all but the rightmost vertex then moves the two chips to that vertex.  Thus wherever a \(-1\) is placed, chips can be moved to eliminate the debt.  A similar argument holds for the second graph.  For the third graph, firing the leftmost four vertices moves chips to the rightmost three, and similarly with left and right switched, so debt can be eliminated anywhere.  Thus the graphs have gonality at most \(2\), \(2\), and \(3\), respectively.  Since their scramble numbers were at least \(2\), \(2\), and \(3\), and since scramble number is a lower bound on gonality, we have that each graph has scramble number equal to gonality (equal to \(2\) for the first two graphs, and equal to \(3\) for the rightmost graph).

\begin{figure}[hbt]
    \centering
    \includegraphics{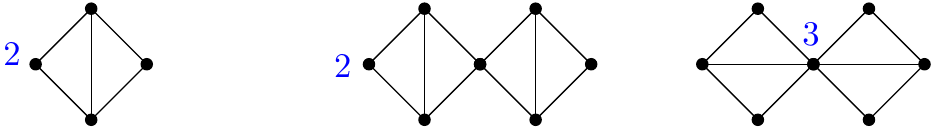}
    \caption{Divisors of positive rank on three graphs}
    \label{fig:wedge_sum_chips}
\end{figure}

\end{example}

We now present two examples that summarize many families of graphs for which gonality is known.

\begin{example}[Graphs with \(\lambda(G)=\gon(G)\)]  Any tree \(T\) has \(\lambda(T)=\gon(T)=1\) (indeed, a graph has gonality \(1\) if and only if it is a tree \cite[Lemma 1.1]{bn2}).  The complete graph \(K_n\) on \(n\) vertices has \(\lambda(K_n)=\gon(K_n)=n-1\); and more generally a complete multipartite graph \(K_{n_1,\ldots,n_\ell}\) has  \(\lambda(G)=\gon(K_{n_1,\ldots,n_\ell})=n_1+\cdots+n_\ell-\max_i\{n_i\}\) \cite[Example 4.3]{treewidth}.  Any cycle graph \(C_m\) has \(\lambda(C_m)=\gon(C_m)=2\); the same holds for any bridgeless hyperelliptic graph on more than \(2\) vertices.  If \(G\) is a trivalent, bridgeless, simple graph of gonality \(3\), then \(\lambda(G)=\gon(G)=3\) \cite{gonality_three}.

\label{example:lambda=gon}
\end{example}

\begin{example}[Product graphs with previously known gonality]  A two-dimensional grid graph \(G_{m,n}\) is the product \(P_m\square P_n\) of two path graphs, and has gonality \(\min(m,n)\) \cite{treewidth}; more generally the product \(T_1\square T_2\) of any two trees has gonality \(\min(|V(T_1)|,|V(T_2)|)\) \cite[Proposition 11]{gonality_product}.  The product \(P_m\square C_n\) of a path with a cycle has gonality \(\min(2m,n)\), and the product \(C_m\square C_n\) of a cycle with a cycle has gonality \(\min(2m,2n)\); this was proved for most values of \(m\) and \(n\) using treewidth in \cite{glued_grid_graphs}, and in general using scramble number in \cite{harp2020new}. The product \(T\square K_n\) of a tree \(T\) on at least two vertices with a complete graph \(K_n\) has gonality \(n\) \cite{gonality_product}.  Finally, for \(m\leq \min(n,5)\), the \(m\times n\) rook's graph \(K_m\square K_n\) has gonality \((m-1)n\) \cite{gonality_product}.

\label{example:known_products}
\end{example}

A useful tool for studying gonality is \emph{Dhar's burning algorithm} \cite{dhar}.  Suppose \(D\) is an effective divisor, and we want to check if \(D-(q)\) is equivalent to an effective divisor.  If \(D-(q)\) has no vertex in debt, then we are done.  Otherwise, start a ``fire'' at \(q\), which spreads through \(G\) as follows:  if a vertex is on fire, then every edge incident to it burns.  If a vertex has more burning edges incident to it than it has chips from \(D\), then that vertex burns.  Let the fire propagate through \(G\).  If all of \(G\) burns, then debt cannot be removed.  If not all of \(G\) burns, then chip-fire the unburned vertices; this will not introduce any new debt in the graph.  If \(q\) is out of debt, then we are done; if not, then run the burning process again, repeating until either \(q\) is out of debt or the whole graph burns.  If there is a way to remove debt, this process will find it; and if it is impossible to do so, then the whole graph will eventually burn.  A useful consequence of this algorithm is the following lemma.

\begin{lemma}\label{lemma:dhars}  Suppose \(D\) is an effective divisor on a graph \(G\), and that there exists a vertex \(q\) such that the burning process on \(D-(q)\) causes the whole graph to burn.  Then \(r(D)=0\).
\end{lemma}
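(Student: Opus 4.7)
The plan is to reduce the statement directly to the correctness of Dhar's burning algorithm, which is already asserted in the prose immediately preceding the lemma. Unpacking the definition of rank, to show $r(D) = 0$ I need to verify two things: that $D$ is equivalent to an effective divisor (which is immediate, since $D \geq 0$ by hypothesis, giving $r(D) \geq 0$), and that there is at least one effective divisor $E$ of degree one for which $D - E$ is \emph{not} equivalent to any effective divisor (which will force $r(D) < 1$).

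For the second point the natural choice is $E = (q)$, so that $D - E = D - (q)$. By hypothesis, running the burning process on $D - (q)$ with the fire ignited at $q$ causes every vertex of $G$ to burn. The algorithm description in the excerpt states that whenever this happens, ``debt cannot be removed''; that is, no sequence of chip-firing moves transforms $D - (q)$ into an effective divisor. Equivalently, $D - (q)$ is not equivalent to any effective divisor.

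Combining the two bullets gives $0 \leq r(D) < 1$, hence $r(D) = 0$. I do not anticipate any real obstacle: the content of the lemma is essentially a clean restatement of the soundness half of Dhar's algorithm (``if the whole graph burns, there is no way to clear the debt''), packaged in the language of rank. The only care needed is to correctly identify the effective divisor $E = (q)$ as the witness that cuts the rank down from $\geq 0$ to exactly $0$.
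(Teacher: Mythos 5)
Your proposal is correct and matches the paper's own treatment: the paper states this lemma as an immediate consequence of the soundness of Dhar's burning algorithm described just above it (``if all of \(G\) burns, then debt cannot be removed''), giving no separate argument, and your two observations --- \(r(D)\geq 0\) since \(D\) is effective, and \(E=(q)\) witnessing \(r(D)<1\) --- are exactly the intended reasoning.
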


Some graphs are particularly susceptible to arguments involving Dhar's burning algorithm, such as the complete graph on \(n\) vertices \(K_n\).  We recall the following lemma.

\begin{lemma}[{\cite[Lemma 14]{gonality_product}}]  \label{lemma:complete_dhars}   (\cite[Lemma 14]{gonality_product}) Let \(D\) be an effective divisor of degree at most \(n-2\) on \(K_n\), and let \(q\) be a vertex with no chips from \(D\). Running the burning process on \(D-(q)\) causes the whole graph to burn in one iteration of Dhar's algorithm.
\end{lemma}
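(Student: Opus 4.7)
The plan is a straightforward contradiction argument exploiting the completeness of $K_n$. Suppose that, starting the fire at $q$ on the divisor $D-(q)$, the burning process stabilizes in one iteration with some nonempty set $U\subset V(K_n)$ of unburned vertices. Since we began the fire at $q$, we have $q\notin U$, and so $1\leq |U|\leq n-1$.

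Next I would translate the stabilization condition into an inequality on $D$. For the fire to have stopped, every $v\in U$ must have at least as many chips as burning edges incident to it. Because $K_n$ is complete, each vertex in $U$ is adjacent to every vertex in $V\setminus U$, and all those edges are burning; hence the number of burning edges at $v$ is exactly $n-|U|$. Dhar's rule (a vertex burns when the number of burning edges strictly exceeds its chip count) therefore forces $D(v)\geq n-|U|$ for every $v\in U$.

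Summing this over $v\in U$ yields
\[
\deg(D)\;\geq\;\sum_{v\in U}D(v)\;\geq\;|U|(n-|U|).
\]
The function $|U|\mapsto |U|(n-|U|)$ on the integer range $1\leq |U|\leq n-1$ attains its minimum at the endpoints, with value $n-1$. Thus $\deg(D)\geq n-1$, contradicting the hypothesis $\deg(D)\leq n-2$. Hence no unburned set $U$ can persist, and the entire graph burns in one iteration.

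The only real subtlety is keeping the burning convention straight: one has to be careful that ``burns'' means strictly more burning edges than chips, so that the stabilization condition on $U$ is the weak inequality $D(v)\geq n-|U|$ rather than a strict one. Beyond that, the argument is purely an edge count in $K_n$ combined with the trivial lower bound on $|U|(n-|U|)$, and I do not anticipate any other obstacle.
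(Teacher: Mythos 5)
Your proof is correct. The paper does not prove this lemma itself---it only cites it from \cite[Lemma 14]{gonality_product}---but your argument is the standard one given there: bound the chips on a putative unburned set \(U\) below by \(|U|(n-|U|)\), and use concavity to see this is at least \(n-1\) for \(1\leq |U|\leq n-1\), contradicting \(\deg(D)\leq n-2\). This is also exactly the style of edge-count the authors themselves use later when analyzing \(C_2\square K_n\), and your care with the weak versus strict inequality in the burning rule is the right point to flag.
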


It immediately follows that \(\gon(K_n)\geq n-1\):  any effective divisor of smaller degree must have rank \(0\) by Lemma \ref{lemma:dhars}.

Another useful set of tools when studying gonality comes from \cite{spencer}, which associates divisors to partial orientations on the graph.  We state the following consequence of that work, an immediate consequence of  \cite[Theorem 1.3]{spencer}.

\begin{lemma}\label{lemma:spencer}  Let \(D\) be a divisor with \(r(D)\geq 0\) and \(\deg(D)\leq |E(G)|-|V(G)|\).  There exists an equivalent effective divisor \(D'\) such that \(D'\) has at most \(\val(v)-1\) chips on each vertex \(v\).
\end{lemma}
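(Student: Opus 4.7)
The plan is to derive this lemma as a direct consequence of the orientation-divisor correspondence made precise in \cite[Theorem 1.3]{spencer}.  First, I would recall the setup:  given a partial orientation \(\mathcal{O}\) of the edges of \(G\), one associates the divisor
\[D_{\mathcal{O}}=\sum_{v\in V(G)}(\indeg_{\mathcal{O}}(v)-1)(v),\]
whose degree equals the number of oriented edges minus \(|V(G)|\).  In particular, divisors coming from partial orientations have degree in the range \(-|V(G)|\) to \(|E(G)|-|V(G)|\), matching the hypothesis \(\deg(D)\leq |E(G)|-|V(G)|\).

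Next, I would invoke Spencer's theorem, which (in the relevant form) states that any divisor \(D\) of non-negative rank with \(\deg(D)\leq |E(G)|-|V(G)|\) is equivalent to an effective divisor of the form \(D_{\mathcal{O}}\) for some partial orientation \(\mathcal{O}\).  Setting \(D'\colonequals D_{\mathcal{O}}\) gives an effective divisor equivalent to \(D\); the effectivity forces \(\indeg_{\mathcal{O}}(v)\geq 1\) at every vertex.

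Finally, to obtain the bound on chips per vertex, I would observe that in any (partial) orientation of \(G\), the indegree of a vertex \(v\) cannot exceed the total number of edges incident to \(v\), so \(\indeg_{\mathcal{O}}(v)\leq\val(v)\).  Therefore
\[D'(v)=\indeg_{\mathcal{O}}(v)-1\leq \val(v)-1\]
for every \(v\in V(G)\), which is exactly the desired conclusion.

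There is not really a ``hard part'' here beyond correctly translating Spencer's result into the language of divisors; the main subtlety is simply checking that the degree hypothesis \(\deg(D)\leq |E(G)|-|V(G)|\) is precisely what is needed to realize \(D\) (up to equivalence) by a partial orientation rather than forcing a full orientation (which would only handle the boundary case), and then reading off the per-vertex bound from the trivial inequality \(\indeg(v)\leq\val(v)\).
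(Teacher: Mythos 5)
Your derivation matches the paper's: the paper states this lemma without proof, as an immediate consequence of \cite[Theorem 1.3]{spencer}, and your argument is precisely that derivation---realize \(D\) up to equivalence as an effective partial-orientation divisor \(D_{\mathcal{O}}\) (possible exactly because \(\deg(D)\leq |E(G)|-|V(G)|\) and \(r(D)\geq 0\)) and read off \(D'(v)=\indeg_{\mathcal{O}}(v)-1\leq\val(v)-1\). No issues.
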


\subsection{Metric graphs}  In this subsection we briefly recall divisor theory on metric graphs. Given a finite connected graph \(G\), we can assign lengths to its edges via a \emph{length function} \(\ell:E(G)\rightarrow\mathbb{R}_{>0}\).  The pair \((G,\ell)\) then defines a topological space \(\Gamma\), constructed by taking a line segment of length \(\ell(e)\) for each \(e\in E(G)\) and gluing the line segments together at their endpoints according to the structure of \(G\). We refer to \(\Gamma\) as a \emph{metric graph}, and \((G,\ell)\) a model for \(\Gamma\). Note that distinct pairs \((G,\ell)\) and \((G',\ell')\) may yield the same metric graph.  Often we work with the \emph{canonical loopless model} \((G,\ell)\) of \(\Gamma\), such that \(G\) has no loops and such that no vertex of \(G\) has degree \(2\) unless it is incident to the same vertex twice.

The theory of divisors on finite graphs was extended to metric graphs in \cite{gk} and \cite{mz}.  A divisor \(D\) on \(\Gamma\) is now a finite formal sum of points of \(\Gamma\) with integer coefficients; the main difference with the case of finite graphs is that these points are now allowed to be on the interior of edges.  Many pieces of terminology, such as \emph{degree} and \emph{effective}, carry through immediately.  Equivalence of divisors is defined in terms of tropical rational functions.  A \emph{tropical rational function} is a continuous piece-wise linear function \(f:\Gamma\rightarrow\mathbb{R}\) with finitely many pieces and integer slopes.  We define the order of \(f\) a point \(p\) to be the sum of the outgoing slopes of \(f\) at the point \(p\), and denote this integer as \(\textrm{ord}_f(p)\).  Note that \(\textrm{ord}_f(p)=0\) for all but finitely many choices of \(p\); this allows us to define the divisor
\[\textrm{div}(f)=\sum_{p\in\Gamma}\textrm{ord}_f(p)\cdot (p).\]
We then say that \(D\) and \(E\) are equivalent if \(D-E=\textrm{div}(f)\) for some \(f\).

Now that we have a definition of equivalence, we say that the rank \(r(D)\) of a divisor \(D\) is \(-1\) if \(D\) is not equivalent to any effective divisor; and that otherwise \(r(D)\) is the maximum integer \(r\geq 0\) such that for every effective divisor \(E\) of degree \(r\), \(D-E\) is equivalent to an effective divisor.  The gonality of \(\Gamma\), denoted \(\textrm{gon}(\Gamma)\), is then the minimum degree of a positive rank divisor on \(\Gamma\).  A priori, it might seem intractable to show a divisor has positive rank, since there are infinitely many effective divisors \(E=(p)\) of degree \(1\).  Thanks to work in \cite{rank_determining}, however, it suffices to consider \(E\) with support in what is called a \emph{rank determining set}. 

We present two lemmas on the gonality of metric graphs, which follow quickly from previous results and arguments in the literature.

\begin{lemma}\label{lemma:metric_lower_bound}  Let \(\Gamma\) be a metric graph with canonical loopless model \((G,\ell)\).  We have
\[\textrm{sn}(G)\leq \gon(\Gamma).\]
\end{lemma}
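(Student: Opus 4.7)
The plan is to reduce the metric-graph inequality to the finite-graph inequality $\sn(G') \leq \gon(G')$ for a suitable subdivision $G'$ of $G$, and then use the subdivision-invariance of scramble number to transfer the bound back to $G$.

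First, I would pick a divisor $D$ on $\Gamma$ with $\deg(D) = \gon(\Gamma)$ and $r(D) \geq 1$. Appealing to the rank-determining sets machinery from \cite{rank_determining} together with the tropical-rational-function calculus recalled above, I may replace $D$ with an equivalent effective divisor $D'$ whose support consists of finitely many rational points of $\Gamma$ (measured against the edge-length parametrization of the canonical loopless model $(G, \ell)$). After scaling all edge lengths to a common integer denominator and subdividing $G$ along $\supp(D')$, I obtain a finite graph $G'$ that is a subdivision of $G$ and on which $D'$ is supported on vertices.

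Next, I would verify that $r_{G'}(D') \geq 1$ as a divisor on the finite graph $G'$. Since $V(G') \subset \Gamma$, for each $v \in V(G')$ the metric positive-rank condition yields an effective divisor on $\Gamma$ equivalent to $D' - (v)$; after a further refinement if necessary, this representative can be taken vertex-supported on $G'$. The foundational compatibility between metric chip-firing and finite chip-firing on integer-length models, established in \cite{gk, mz}, guarantees that the metric equivalence restricts to finite-graph equivalence on $G'$. Consequently,
\[ \gon(G') \leq \deg(D') = \deg(D) = \gon(\Gamma). \]

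Finally, I invoke two results already recorded in this section: the finite-graph bound $\sn(G') \leq \gon(G')$ from \cite[Theorem 1.1]{harp2020new}, and the invariance $\sn(G') = \sn(G)$ under subdivision from the proposition based on \cite[Proposition 4.4]{harp2020new}. Chaining these gives
\[ \sn(G) \;=\; \sn(G') \;\leq\; \gon(G') \;\leq\; \gon(\Gamma), \]
which is the desired inequality. The main obstacle is the first step: producing a vertex-supported representative on a subdivision while preserving positive rank. This is not routine because rank is defined via \emph{every} $E$ of degree at most $r$, and the relevant witnessing effective divisors on $\Gamma$ live a priori at arbitrary points. The rank-determining-set result plus the integer-length compatibility sidesteps this, but both must be cited explicitly rather than waved at.
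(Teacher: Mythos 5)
Your overall route is the same as the paper's: find a subdivision $G'$ of $G$ with $\gon(G')\leq\gon(\Gamma)$, then chain $\sn(G)=\sn(G')\leq\gon(G')\leq\gon(\Gamma)$ using subdivision-invariance of scramble number and the finite-graph inequality from \cite{harp2020new}. The difference is that the paper obtains the crucial intermediate subdivision in one line by citing \cite[Theorem 5.1]{treewidth}, whereas you try to construct it by hand, and that construction has real gaps. First, the edge lengths of the canonical loopless model are arbitrary positive reals; if they are incommensurable there is no ``common integer denominator'' to scale to, and an effective representative of $D$ need not have rational support, so your $G'$ need not exist as described. Second, even granting a vertex-supported $D'$ on some subdivision, verifying $r_{G'}(D')\geq 1$ requires, for each $v\in V(G')$, converting a tropical rational function witnessing $D'-(v)\sim E\geq 0$ on $\Gamma$ into finite chip-firing on $G'$; the bend points of these functions (and the supports of the witnesses $E$) vary with $v$ and need not lie on $V(G')$, so ``a further refinement if necessary'' changes the graph out from under you, and you would need a single subdivision working uniformly. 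Third, \cite{gk,mz} set up metric divisor theory but do not prove the finite-versus-metric compatibility statement you need; that statement \emph{is} essentially \cite[Theorem 5.1]{treewidth}. The repair is simply to cite that theorem for the existence of a subdivision $H$ of $G$ with $\gon(H)\leq\gon(\Gamma)$, after which your final display is exactly the paper's proof.
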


\begin{proof}  By \cite[Theorem 5.1]{treewidth}, there is a subdivision \(H\) of \(G\) such that \( \textrm{gon}(H)\leq \textrm{gon}(\Gamma)\).  By \cite[Proposition 4.4]{harp2020new} we know that \(\textrm{sn}(G)=\textrm{sn}(H)\), so we have
\[\textrm{sn}(G)=\textrm{sn}(H)\leq \textrm{gon}(H)\leq \textrm{gon}(\Gamma),\]
as desired.
\end{proof}

\begin{lemma}\label{lemma:metric_upper_bound}
 Let \(\Gamma\) be a metric graph with canonical loopless model \((G,\ell)\), and suppose that \(G\) is simple with \(|V(G)|=n\).  Then we have
\[\gon(\Gamma)\leq n-\alpha(G).\]
\end{lemma}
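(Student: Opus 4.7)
The plan is to exhibit an effective divisor of degree \(n-\alpha(G)\) on \(\Gamma\) and show it has rank at least \(1\); the bound on gonality then follows by definition. Let \(I\subseteq V(G)\) be a maximum independent set, so \(|I|=\alpha(G)\), and set \(D=\sum_{v\in V(G)\setminus I}(v)\); viewing the vertices of \(G\) as points of \(\Gamma\), this is an effective divisor of degree \(n-\alpha(G)\) on \(\Gamma\), and the task reduces to showing \(r(D)\geq 1\).

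I will reduce to a finite-graph calculation via rank-determining sets. By \cite{rank_determining}, the vertex set \(V(G)\) of the canonical loopless model is rank-determining in \(\Gamma\), so it suffices to exhibit, for each \(q\in V(G)\), an effective divisor on \(\Gamma\) equivalent to \(D-(q)\). When \(q\in V\setminus I\) this is immediate, since \(D-(q)\) is already effective. For the substantive case \(q\in I\), the independence of \(I\) places every neighbor \(u\) of \(q\) into \(V\setminus I\), so each such \(u\) carries a single chip of \(D\); because \(G\) is simple the only burning edge at \(u\) is \(qu\). Running Dhar's burning algorithm (as in Lemma \ref{lemma:dhars}) on the finite graph \(G\) with debt at \(q\) therefore traps the fire at \(\{q\}\), and firing the unburned set \(V(G)\setminus\{q\}\) yields a divisor equivalent to \(D\) on \(G\) placing \(\val(q)\geq 1\) chips at \(q\), zero chips on \(N(q)\), and the original chips elsewhere; subtracting \((q)\) then gives an effective divisor on \(G\) equivalent to \(D-(q)\).

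The main obstacle is that this finite-graph equivalence does not directly transfer to a tropical equivalence on \(\Gamma\), since the Laplacian move \(\val(q)(q)-\sum_{u\in N(q)}(u)\) need not be the divisor of any tropical rational function on \(\Gamma\) when the edge lengths at \(q\) are incommensurable. Fortunately, rank-determining sets require only that \(D-(q)\) be equivalent on \(\Gamma\) to \emph{some} effective divisor, not to the specific vertex-supported witness produced on \(G\). To build such a witness I will use a piecewise linear function on \(\Gamma\) concentrated near \(q\), with its slopes arranged so that the \(-1\) at \(q\) is canceled and the compensating chips land at points of \(\Gamma\) (at the neighbors \(u\) when their incident edge lengths allow, and otherwise at integer-slope bend points in the interior of the edges incident to \(q\)); the resulting divisor is effective on \(\Gamma\) and equivalent to \(D-(q)\). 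Combining the two cases gives \(r(D)\geq 1\), and hence \(\gon(\Gamma)\leq n-\alpha(G)\).
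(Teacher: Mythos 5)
Your proposal is correct and follows essentially the same route as the paper: the same divisor supported on the complement of a maximum independent set, the same reduction to \(q\in V(G)\) via rank-determining sets, and the same idea of a piecewise linear function concentrated on the star of \(q\) that pushes the neighbors' chips toward \(q\). The one step you describe without actually constructing is exactly where the paper's minimum-length trick enters: take \(f\) to be zero off the edges incident to \(q\) and to have slope \(1\) on the initial segment of length \(\min_i\ell(e_i)\) of each such edge (measured from the neighbor), constant thereafter; this is continuous at \(q\), sends at least one chip all the way to \(q\), and deposits each remaining chip at an interior bend point, which is precisely the outcome you predict. The detour through Dhar's algorithm on the finite graph is harmless but unnecessary, since independence of \(I\) already guarantees that every neighbor of \(q\) carries a chip.
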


This result was proved in \cite[Theorem 3.1]{gonality_random_graphs} for finite graphs; our proof follows theirs closely, with minor modifications for our metric case.

\begin{proof}
Let \(S\subset V(G)\) be an independent set of maximum size, and let \(D\) place a chip on all points of \(\Gamma\) corresponding to \(S^C\subset V(G)\).  Then \(D\) has degree \(n-\alpha(G)\).  To show that it has positive rank, we let \((p)\) be any effective divisor of degree \(1\) such that \(p\in V(G)\); by \cite[Theorem 1.6]{rank_determining}, \(V(G)\) is a rank determining set, so it suffices to show that \(D-(p)\) is equivalent to an effective divisor.  If \(p\in S^C\), then \(D-(p)\) is effective.  If \(p\in S\), we let \(e_1,\ldots,e_k\) be the edges incident to \(p\), and without loss of generality we assume \(e_1\) has the minimum length of those \(k\) edges.  Let \(f\) be the tropical rational function that is \(0\) outside of  \(e_1,\ldots,e_k\); has slope \(1\) for a length of \(\ell(e_1)\) on each of \(e_1,\ldots,e_k\) heading in towards \(p\); and has slope \(0\) otherwise.  Note that the only points \(q\) such that \(\textrm{ord}_f(q)>0\) are the endpoints of \(e_1,\ldots,e_k\) besides \(p\); and that \(\textrm{ord}_f(p)<0\).  Thus \(D-(p)-\textrm{div}(f)\) is an effective divisor that is equivalent to \(D-(p)\).  We conclude that \(r(D)\geq 1\), completing the proof.
\end{proof}

\section{The edge scramble and computational complexity}
\label{section:complexity}

Given a graph \(G\), we define the \emph{edge scramble on \(G\)}, denoted \(\mathcal{E}(G)\), to be the scramble on \(G\) whose eggs consist of all edges of \(G\).  A set \(S\subset V(G)\) is hitting set for \(\mathcal{E}(G)\) if and only if \(S\) is a vertex cover, meaning every vertex is either in \(S\) or adjacent to a vertex in \(S\); thus the minimum size of a hitting set for \(\mathcal{E}\) is \(n-\alpha(G)\), where \(n=|V(G)|\) and  \(\alpha(G)\) is the independence number of \(G\).  The challenge in computing the order of \(\mathcal{E}(G)\) is then in finding the size of a minimal egg-cut.  In the event that the minimum degree \(\delta(G)\) of \(G\) is large compared to the number of vertices, we can achieve this using the following lemma.

\begin{lemma}
Let \(G\) be a simple graph on \(n\) vertices with \(\delta(G)\geq n/2+1\).  Then the order of the edge scramble \(\mathcal{E}(G)\) is \(n-\alpha(G)\).
\end{lemma}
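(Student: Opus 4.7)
The plan is to evaluate the two halves of the formula $||\mathcal{E}(G)|| = \min(h(\mathcal{E}(G)), e(\mathcal{E}(G)))$ separately. The paragraph preceding the lemma already identifies $h(\mathcal{E}(G)) = n - \alpha(G)$, since hitting sets for $\mathcal{E}(G)$ are exactly vertex covers and by the Gallai identity the minimum vertex cover has size $n - \alpha(G)$. The task therefore reduces to proving $e(\mathcal{E}(G)) \geq n$: this combined with $\alpha(G) \geq 1$ would give $n - \alpha(G) \leq n-1 < n \leq e(\mathcal{E}(G))$, forcing $||\mathcal{E}(G)|| = n - \alpha(G)$.

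For the egg-cut bound, I would start from an arbitrary egg-cut $A$. By the definition of an egg-cut for $\mathcal{E}(G)$, both $A$ and $A^C$ must contain a whole edge, so $|A|, |A^C| \geq 2$; swapping $A$ and $A^C$ if necessary, I may assume $k := |A| \leq n/2$. For each $v \in A$, at most $k-1$ of its at least $\delta(G) \geq n/2 + 1$ neighbors lie in $A$, so $v$ contributes at least $\delta(G) - k + 1 \geq n/2 + 2 - k$ edges to the cut. Summing over $v \in A$ gives
\[
|E(A, A^C)| \;\geq\; k\left(\tfrac{n}{2} + 2 - k\right).
\]
The final step is to check that this quadratic in $k$ is at least $n$ on $[2, n/2]$. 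Since it is concave, its minimum on that interval is attained at an endpoint, and direct substitution gives the value $n$ at both $k = 2$ and $k = n/2$.

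The main obstacle I foresee is the case of odd $n$, where the relevant integer endpoint is $\lfloor n/2 \rfloor < n/2$ and the endpoint substitution needs a small adjustment. Fortunately, odd $n$ forces $\delta(G) \geq \lceil n/2 \rceil + 1$, which provides just enough extra slack for the bound to go through at $k = \lfloor n/2 \rfloor$. Beyond this routine parity check, the argument is an elementary degree count, and the structural point of the proof is simply that the hypothesis $\delta(G) \geq n/2 + 1$ is strong enough to make every egg-cut of $\mathcal{E}(G)$ dominate the hitting-set term.
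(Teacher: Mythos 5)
Your proposal is correct and follows essentially the same route as the paper: both arguments take the minimum hitting set size $n-\alpha(G)$ as given, reduce to bounding egg-cuts, assume $|A|\leq n/2$ with $|A|\geq 2$, and bound $|E(A,A^C)|$ below by the concave quadratic $|A|(\delta(G)-|A|+1)$ evaluated at the endpoints of the interval (the paper writes this as $\delta|A|-2\binom{|A|}{2}$, which is the same count). The only cosmetic difference is that the paper settles for the bound $n-1$ on egg-cuts rather than $n$, which, as you note, still suffices since $\alpha(G)\geq 1$.
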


\begin{proof}
First we note that \(G\) is connected:  the maximum possible value of \(\delta(G)\) for a disconnected graph is \(\lfloor n/2\rfloor-1\).
Since the size of a minimal hitting set of \(\mathcal{E}(G)\) is \(n-\alpha(G)\), it suffices to show that any egg-cut has size at least \(n-\alpha(G)\).  Indeed, we will show that any egg-cut has size at least \(n\).

Let \(A\subset V(G)\) such that \((A,A^C)\) forms an egg-cut.  Since every egg has two vertices, we know \(|A|\geq 2 \); and without loss of generality \(|A|\leq |A^C|\), so \(|A|\leq n/2\).

To lower bound \(|E(A,A^C)|\), we note that every vertex in \(A\) has at least \(\delta\) edges incident to it, giving us at least \(\delta|A|\) edges incident to a vertex in \(A\) (possibly counting some edges twice).  However, not all of these need be in \(E(A,A^C)\):  some might be connected pairs of vertices in \(A\), with the worst case being that every two vertices of \(A\) are incident to one another.  This implies that
\[|E(A,A^C)|\geq \delta\cdot |A|-2{|A|\choose 2}=\delta\cdot |A|-|A|(|A|-1),\]
where the factor of \(2\) in front of \({|A|\choose 2}\) comes from the fact that \(\delta\cdot |A|\) double counts any edges shared between vertices of \(A\).

We claim that for \(2\leq |A|\leq \lfloor n/2\rfloor \), we have \(\delta\cdot |A|-|A|(|A|-1)\geq n-1\).  We will show this by arguing that \(f(x)=\delta x-x(x-1)\geq n-1\) for all real numbers \(2\leq x\leq \lfloor n/2\rfloor\).  Since \(f(x)=-x^2+(\delta+1)x\) is concave down, its minimum value on an interval is achieved at an endpoint.  Note that \[f(2)=-4+2(\delta+1)=2\delta-2\geq 2(\lfloor n/2\rfloor +1)-2\geq n-1,\] and that \[f(\lfloor n/2\rfloor)=-\lfloor n/2\rfloor^2+(\delta+1)\lfloor n/2\rfloor\geq-\lfloor n/2\rfloor^2+(\lfloor n/2\rfloor+2)\lfloor n/2\rfloor=2\lfloor n/2\rfloor\geq n-1.\]  Thus there are at least \(n-1\) edges in \(E(A,A^C)\).

Since the minimum hitting set for \(\mathcal{E}(G)\) is \(n-\alpha(G)\), and since the size of a minimum egg-cut is at least \(n-1\geq n-\alpha(G)\), we conclude that the order of \(\mathcal{E}\) is \(n-\alpha(G)\).
\end{proof}

\begin{corollary}\label{corollary:all_equal}
Let \(G\) be a simple graph on \(n\) vertices with \(\delta(G)\geq \lfloor n/2\rfloor +1\).
Then
\[\textrm{sn}(G)=\textrm{gon}(G)=n-\alpha(G).\]
\end{corollary}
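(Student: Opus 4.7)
The plan is to sandwich the value $n-\alpha(G)$ between three inequalities: a lower bound on scramble number from the preceding lemma, the general inequality $\sn(G)\leq \gon(G)$ from \cite{harp2020new}, and a matching upper bound on gonality coming from an explicit divisor. The preceding lemma already produces a scramble on $G$, namely the edge scramble $\mathcal{E}(G)$, of order exactly $n-\alpha(G)$, so immediately $\sn(G)\geq n-\alpha(G)$. Combining with $\sn(G)\leq \gon(G)$ then yields $\gon(G)\geq n-\alpha(G)$.

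The only remaining ingredient is the matching upper bound $\gon(G)\leq n-\alpha(G)$ for a (connected) simple graph. For this I would exhibit an explicit positive-rank divisor of degree $n-\alpha(G)$. Fix a maximum independent set $S\subset V(G)$, and let $D=\sum_{v\in V\setminus S}(v)$, an effective divisor of degree $n-|S|=n-\alpha(G)$. To show $r(D)\geq 1$ it suffices to verify, for every $q\in V(G)$, that $D-(q)$ is equivalent to an effective divisor. If $q\notin S$ there is nothing to check. If $q\in S$, the natural move is to simultaneously chip-fire the set $V(G)\setminus\{q\}$ (i.e.\ to anti-fire $q$). Using that $S$ is independent (so $q$ is not adjacent to any vertex of $S\setminus\{q\}$), that $G$ is simple (so each firing contributes exactly one chip across an edge), and that $\val(q)\geq \delta(G)\geq 1$, a short bookkeeping check gives the following counts after the move: each $v\in V\setminus S$ with $v\sim q$ ends with $0$ chips; each $v\in V\setminus S$ with $v\not\sim q$ ends with $1$ chip; each $v\in S\setminus\{q\}$ ends with $\val(v)-\val(v)=0$ chips; and $q$ itself ends with $\val(q)-1\geq 0$ chips. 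The resulting divisor is effective, so $D-(q)$ is equivalent to an effective divisor, establishing $r(D)\geq 1$ and hence $\gon(G)\leq n-\alpha(G)$.

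Chaining the three inequalities gives
\[n-\alpha(G)\leq \sn(G)\leq \gon(G)\leq n-\alpha(G),\]
forcing equality throughout and proving the corollary. The real content is already in the preceding lemma, which is where the hypothesis $\delta(G)\geq \lfloor n/2\rfloor+1$ is needed (to pin down the egg-cut side of the edge scramble); the matching upper bound on gonality is the classical independent-set construction and presents no obstacle, so I expect no difficulty beyond the short chip-firing bookkeeping described above.
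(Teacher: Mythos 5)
Your proposal is correct and follows the same route as the paper: the lower bound $n-\alpha(G)\leq\sn(G)$ from the edge scramble of the preceding lemma, the inequality $\sn(G)\leq\gon(G)$, and the matching upper bound $\gon(G)\leq n-\alpha(G)$ from the divisor supported on the complement of a maximum independent set. The only difference is that you carry out the chip-firing verification of that upper bound explicitly (correctly, via anti-firing $q$), whereas the paper simply cites Theorem 3.1 of \cite{gonality_random_graphs}.
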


\begin{proof}
By the previous lemma the edge scramble on \(G\) has order \(n-\alpha(G)\), implying that  \(n-\alpha(G)\leq  \textrm{sn}(G)\).  On the other hand, \(\textrm{gon}(G)\leq n-\alpha(G)\) since an effective divisor with a single chip on every vertex outside of a maximal independent set has positive rank (see \cite[Theorem 3.1]{gonality_random_graphs}).  Thus we have
\[n-\alpha(G)\leq\textrm{sn}(G)\leq\textrm{gon}(G)\leq n-\alpha(G), \]
implying equality of all terms.
\end{proof}

\begin{example}
It is reasonable to ask whether this result is sharp, or if we could decrease the lower bound on \(\delta(G)\), perhaps with the added assumption that \(G\) is connected. For \(n\geq 6\) and even, it is indeed sharp:  consider \(K_m\square K_2\), the \(m\times 2\) rook's graph, where \(m\geq 3\).  This has \(n=2m\) vertices, each of which has degree \(m=n/2\), so \(\delta(K_m\square K_2)=n/2\).  The gonality of this graph is \(\min(m,2(m-1))=m\) by \cite{gonality_product}, but \(\alpha(K_m\square K_2)=2\), so \(\gon(K_m\square K_2)=m<2m-2=n-\alpha(K_m\square K_2)\) since \(m\geq 3\).

For \(n\) odd with \(n\geq 5\), we can construct a similar example to show that \(\delta(G)\geq \lfloor n/2\rfloor\) is not a sufficient hypothesis.  In particular, take the graph \(K_m\square K_2\) where \(m\geq 2\), and add a vertex \(v\) that is incident to all vertices in one of the two canonical copies of \(K_m\). This graph \(G\) has \(n=2m+1\) vertices, and so \(\delta(G)=m=\lfloor(2m+1)/2\rfloor= \lfloor n/2\rfloor\).  A treewidth argument shows that  \(\gon(G)\geq m\), and indeed by placing a chip on all vertices incident to \(v\) we can find a positive rank divisor of degree \(m\).  However, \(\alpha(G)=2\), so \(\gon(G)=m<2m-1=(2m+1)-2=n-\alpha(G)\) since \(m\geq 2\).
\end{example}

This result for finite graphs can be used to prove a similar result for metric graphs.

\begin{corollary}\label{corollary:metric}
Let \(\Gamma\) be a metric graph with canonical loopless model \((G,\ell)\), such that \(G\) is simple with \(|V(G)|=n\) and \(\delta(G)\geq \lfloor n/2\rfloor+1\).  Then \[\textrm{gon}(\Gamma)=\textrm{gon}(G)=\sn(G)=n-\alpha(G).\]
In particular, changing the edge lengths of such a metric graph \(\Gamma\) does not effect its gonality.
\end{corollary}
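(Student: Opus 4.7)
The plan is to sandwich $\gon(\Gamma)$ between two quantities that Corollary~\ref{corollary:all_equal} already identifies. Concretely, under the hypothesis $\delta(G)\ge \lfloor n/2\rfloor + 1$, Corollary~\ref{corollary:all_equal} gives $\sn(G)=\gon(G)=n-\alpha(G)$, so it only remains to control $\gon(\Gamma)$.

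First I would invoke Lemma~\ref{lemma:metric_lower_bound}, which yields $\sn(G)\le \gon(\Gamma)$; since $(G,\ell)$ is by hypothesis the canonical loopless model of $\Gamma$, this lemma applies directly. Then I would apply Lemma~\ref{lemma:metric_upper_bound}, which gives $\gon(\Gamma)\le n-\alpha(G)$, again using that $(G,\ell)$ is the canonical loopless model and $G$ is simple. Chaining these with Corollary~\ref{corollary:all_equal} produces
\[
n-\alpha(G)=\sn(G)\le\gon(\Gamma)\le n-\alpha(G),
\]
which forces equality throughout and, together with $\gon(G)=n-\alpha(G)$ from Corollary~\ref{corollary:all_equal}, establishes $\gon(\Gamma)=\gon(G)=\sn(G)=n-\alpha(G)$.

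The final sentence about edge-length independence is then immediate: the right-hand side $n-\alpha(G)$ depends only on the combinatorial data of $G$, not on the length function $\ell$, so any choice of $\ell$ produces the same gonality for $\Gamma$. There is no real obstacle here; the work has already been done in Corollary~\ref{corollary:all_equal} and in the two metric lemmas, and this corollary is essentially a bookkeeping step that assembles them.
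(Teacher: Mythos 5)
Your proposal is correct and follows exactly the paper's own argument: both sandwich $\gon(\Gamma)$ between $\sn(G)$ (via Lemma~\ref{lemma:metric_lower_bound}) and $n-\alpha(G)$ (via Lemma~\ref{lemma:metric_upper_bound}), then close the gap with Corollary~\ref{corollary:all_equal}. No differences worth noting.
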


\begin{proof}
By Lemmas \ref{lemma:metric_lower_bound} and \ref{lemma:metric_upper_bound} we have
\[\textrm{sn}(G)\leq \textrm{gon}(\Gamma)\leq n-\alpha(G).\]
By Corollary \ref{corollary:all_equal}, the upper and lower bound are equal to one another, and to \(\textrm{gon}(G)\).  This gives the claimed equalities.
\end{proof}

We now present the construction that will allow our NP-hardness proofs.  These will make use of the \emph{\(\ell^{th}\) cone} over a graph \(G\); this graph is constructed from \(G\) by adding \(\ell\) additional vertices, each connected to every other vertex (including one another).

\begin{lemma}\label{lemma:ghat_construction}
Let \(G\) be a simple connected graph on \(m\geq 2\) vertices, and let \(\widehat{G}\) be the \(m^{th}\) cone over \(G\). Then \(\textrm{sn}(\widehat{G})=\textrm{gon}(G)=2m-\alpha(G)\).
\end{lemma}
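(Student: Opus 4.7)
The plan is to reduce the statement directly to Corollary \ref{corollary:all_equal} by checking that $\widehat{G}$ satisfies the hypotheses of that corollary and then identifying $\alpha(\widehat{G})$ with $\alpha(G)$.

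First I would compute $\delta(\widehat{G})$. Let $n = |V(\widehat{G})| = 2m$. Each of the $m$ newly added cone vertices is adjacent to every other vertex of $\widehat{G}$, so its valence is $2m-1$. Each original vertex $v \in V(G)$ retains its $G$-neighbors and gains an edge to each of the $m$ cone vertices, so $\textrm{val}_{\widehat{G}}(v) = \textrm{val}_G(v) + m \geq 1 + m$, using that $G$ is connected with $m \geq 2$. Hence $\delta(\widehat{G}) \geq m+1 = \lfloor n/2 \rfloor + 1$. Since $\widehat{G}$ is simple (as $G$ is simple and cones add only simple edges), Corollary \ref{corollary:all_equal} applies and gives
\[
\textrm{sn}(\widehat{G}) = \textrm{gon}(\widehat{G}) = n - \alpha(\widehat{G}) = 2m - \alpha(\widehat{G}).
\]

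Next I would argue $\alpha(\widehat{G}) = \alpha(G)$. Any independent set $I \subseteq V(\widehat{G})$ containing a cone vertex $w$ can contain no other vertex, because $w$ is adjacent to everything; such an $I$ has size $1$. If $I$ contains no cone vertex, then $I \subseteq V(G)$, and $I$ is independent in $\widehat{G}$ if and only if it is independent in $G$. Thus $\alpha(\widehat{G}) = \max(1, \alpha(G)) = \alpha(G)$, where the last equality uses $\alpha(G) \geq 1$. Substituting back yields $\textrm{sn}(\widehat{G}) = \textrm{gon}(\widehat{G}) = 2m - \alpha(G)$, as claimed.

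There is essentially no hard step here: the whole content is that the cone construction forces the minimum degree above the $\lfloor n/2 \rfloor + 1$ threshold of Corollary \ref{corollary:all_equal}, and that coning does not change the independence number once the original graph has at least one vertex. The only thing one must be careful about is to observe that $\widehat{G}$ is indeed simple and connected so that the hypotheses of Corollary \ref{corollary:all_equal} are literally satisfied; both are immediate from the construction and the hypothesis that $G$ is simple.
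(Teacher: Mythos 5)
Your proposal is correct and follows essentially the same route as the paper: verify $\delta(\widehat{G})\geq m+1=\lfloor n/2\rfloor+1$, invoke Corollary \ref{corollary:all_equal}, and then note that coning preserves the independence number. Your justification that $\alpha(\widehat{G})=\alpha(G)$ is in fact slightly more detailed than the paper's, which simply asserts this fact.
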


\begin{proof}
First we note that \(\delta(\widehat{G})\geq m+1\):  every vertex in \(G\) started out with degree at least \(1\), and in \(\widehat{G}\) that degree is increased by \(m\); and every new vertex has degree \(2m-1\geq m+1\). Since \(\widehat{G}\) is  graph on \(n=2m\) vertices and \(\delta(\widehat{G})\geq m+1=\lfloor n/2\rfloor +1\) we can apply the previous corollary to determine that
\[\textrm{sn}(\widehat{G})=\textrm{gon}(\widehat{G})=2m-\alpha(\widehat{G}).\]
It remains to show that \(\alpha(\widehat{G})=\alpha(G)\). But taking the cone of a graph does not change the independence number, so the \(m^{th}\) cone over \(G\) has the same indepedence number as \(G\).  This completes the proof.
\end{proof}

We can now prove our NP-hardness results.

\begin{proof}[Proof of Theorems \ref{theorem:NP_hard_scramble}, \ref{theorem:NP_hard_gonality}, and \ref{theorem:NP_hard_metric}]  The graph \(\widehat{G}\) constructed from a simple graph \(G\) in Lemma \ref{lemma:ghat_construction} has a number of vertices and a number of edges that are computed by polynomial expressions of the corresponding numbers for \(G\) (in  particular: \(|V(\widehat{G})=2|V(G)|\) and \(|E(\widehat{G})|=|E(G)|+|V(G)|^2+\frac{|V(G)|(|V(G)|-1)}{2}\)).  Thus to compute the independence number \(\alpha(G)=2m-\textrm{sn}(\widehat{G})=2m-\textrm{gon}(\widehat{G})\), we can instead compute either the scramble number or the gonality of the (simple) graph \(\widehat{G}\) with only polynomial blow-up.  Similarly, letting \(\Gamma\) be any metric version of \(\hat{G}\), we have that \(\alpha(G)=2m-\gon(\Gamma)\) by Corollary \ref{corollary:metric}, so we can also compute independence number from metric graph gonality with only polynomial blow-up in the input.   Since computing independence number is NP-complete, we conclude that computing the scramble number of a graph is NP-hard;  that computing the gonality of a simple graph is NP-hard; and that computing the gonality of a metric graph is NP-hard.
\end{proof}

We close this section by posing the following question.

\begin{question}
For any fixed \(k\), does there exist a polynomial time algorithm for determining whether \(\textrm{sn}(G)\leq k\)?
\end{question}

The answer to the corresponding question for \(\gon(G)\) is yes; see for instance \cite[\S 6]{gonality_sequences}.  This comes from the brute-force algorithm of checking whether each divisor of degree \(k\) has positive rank, for instance via Dhar's algorithm.  However, there is not even a particularly nice ``brute-force'' algorithm for computing scramble number that we know of.

\section{A lower bound on scramble number for product graphs}
\label{section:lower_bound}

We now prove our main lower bound on the scramble number of the Cartesian product of two graphs. 

\begin{theorem}\label{theorem:sn_lower_bound_general}
Let \(G\) and \(H\) be connected graphs, such that \(G\) is \(k\)-connected and has at least \(2k-1\) vertices.  We have
\[\textrm{sn}(G\square H)\geq \min\left(k|V(H)|,|V(G)|\lambda(H),(|V(G)|-2k+2)\lambda(H)+2\lambda(G)\right). \]
\end{theorem}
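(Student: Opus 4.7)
My plan is to construct a scramble $\mathcal{S}$ on $G \square H$ adapted to the $k$-connectivity of $G$. Since $G$ is $k$-connected on at least $k$ vertices, the Györi--Lovász theorem yields a partition $V(G) = P_1 \sqcup \cdots \sqcup P_k$ in which each $P_i$ induces a connected subgraph of $G$. I would then take the eggs of $\mathcal{S}$ to be $E_{w,i} := P_i \times \{w\}$ for every $w \in V(H)$ and $i \in \{1,\ldots,k\}$; each is connected in $G \square H$, and the $k|V(H)|$ eggs are pairwise disjoint, so $h(\mathcal{S}) = k|V(H)|$. Since $h(\mathcal{S})$ already dominates the minimum appearing in the statement, everything reduces to showing $e(\mathcal{S}) \geq \min(k|V(H)|,\, |V(G)|\lambda(H),\, (|V(G)|-2k+2)\lambda(H)+2\lambda(G))$, after which the conclusion follows from $\sn(G \square H) \geq \min(h(\mathcal{S}),e(\mathcal{S}))$.

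For any egg-cut $A \subset V(G \square H)$, I would split $V(H)$ into $W_A, W_{A^C}, W_{\textrm{split}}$ according to whether the row $V(G) \times \{w\}$ lies in $A$, in $A^C$, or is split, and consider the complementary ``column'' picture $B_u := \{w : (u,w) \in A\}$. If both $W_A$ and $W_{A^C}$ are nonempty, then for every $u$ the set $B_u$ contains $W_A$ and is disjoint from $W_{A^C}$, so it is a proper nonempty subset of $V(H)$; this contributes at least $\lambda(H)$ H-edges per column and at least $|V(G)|\lambda(H)$ in total, matching the second term. If instead every row is split, then each row $w$ has $U_w := \{u : (u,w) \in A\}$ proper nonempty, contributing at least $\lambda(G) \geq k$ G-edges, for a total of at least $k|V(H)|$, matching the first term.

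The main obstacle is the remaining case, where without loss of generality some row lies fully in $A$ but none lies fully in $A^C$. Here any egg in $A^C$ is forced to sit in a split row $w^*$ with $U_{w^*} \subset V(G) \setminus P_{i^*}$ for some $i^*$. Setting $U^* := \{u : \{u\} \times V(H) \subset A\}$, we obtain $U^* \subset U_{w^*}$, and by considering every $A^C$-egg simultaneously, $U^*$ is trapped in $V(G) \setminus \bigcup_{i \in I^*} P_i$ where $I^*$ indexes the parts represented in $A^C$-eggs. Each column outside $U^*$ costs $\lambda(H)$ in H-edges and each split row costs $\lambda(G)$ in G-edges. The delicate step is to combine these contributions to reach $(|V(G)|-2k+2)\lambda(H) + 2\lambda(G)$: I would exploit the assumption $|V(G)| \geq 2k-1$ and apply Menger's theorem (together with a degree-counting consequence for $k$-connected graphs) to argue that whenever the H-edge sum falls below $|V(G)|\lambda(H)$, the structural constraints on $U^*$ force at least $|V(G)|-2k+2$ columns outside $U^*$ and at least two split rows contributing $\lambda(G)$ each. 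Choosing part sizes judiciously via Györi--Lovász is what makes this balance work, and pinning down this interplay cleanly is where the bulk of the argument lies.
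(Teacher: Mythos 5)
There is a genuine gap, and it lies in the choice of scramble rather than in the bookkeeping you defer to the end. Because your eggs \(P_i\times\{w\}\) are pairwise disjoint, every single egg is itself an egg-cut: taking \(A=P_i\times\{w\}\) separates that egg from any other egg. The size of this cut is \(|E_G(P_i,P_i^C)|+|P_i|\cdot\val_H(w)\), which can be far below the claimed bound. Concretely, take \(G=H=C_5\) and \(k=2\), where the theorem asserts a lower bound of \(\min(2\cdot 5,\,5\cdot 2,\,(5-2)\cdot 2+2\cdot 2)=10\). Any partition of \(C_5\) into two connected arcs has a part \(P_i\) of size \(a\le 2\), and \(A=P_i\times\{w\}\) is then an egg-cut of size \(2+2a\le 6<10\); no choice of part sizes fixes this, since the two arcs cannot both have size \(\ge 4\). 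So the ``remaining case'' you identify as the delicate step is not merely delicate --- the bound you need is false for your scramble, and the counterexample lands exactly in that case (one side contains full rows, the other does not).

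The paper avoids this by making the eggs large and overlapping within each canonical copy of \(G\): the eggs are all sets of the form (canonical copy of \(G\)) minus \(k-1\) vertices, which are connected by \(k\)-connectivity. Disjointness is sacrificed, so the hitting-set bound \(h(\mathcal{S})=k|V(H)|\) needs a short separate argument (any copy of \(G\) meeting a hitting set in \(\le k-1\) vertices contains an egg avoiding it), but the payoff is that the hypothesis \(|V(G)|\ge 2k-1\) forces any two eggs in the same copy of \(G\) to intersect. Hence every egg-cut must separate two \emph{distinct} canonical copies \(G\square w_1\) and \(G\square w_2\), which yields \(\lambda(H)\) edges inside \(v\square H\) for every \(v\) with \(v\square w_1\in A\) and \(v\square w_2\in A^C\), plus \(\lambda(G)\) edges inside \(G\square w_i\) whenever that copy is not entirely on its expected side; the three terms of the minimum then correspond to how many of the two copies fail to lie entirely in \(A\) resp.\ \(A^C\). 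If you want to salvage your approach, you would need eggs whose individual boundaries are already at least the target bound, which is essentially what pushes you back to eggs occupying all but \(k-1\) vertices of a copy of \(G\).
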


\begin{proof}
Define a scramble   \(\mathcal{S}\) on \(G\square H\) whose eggs are  canonical copies of \(G\) with \(k-1\) vertices deleted.  To see that this is indeed a scramble, we note that \(G\) is \(k\)-connected, so deleting  \(k-1\) vertices will not disconnect a canonical copy of \(G\).

We claim that the size of a minimum hitting set of \(\mathcal{S}\) is \(k|V(H)|\).  Certainly if we choose \(k\) vertices in each of the \(|V(H)|\) canonical copies of \(G\), this forms a hitting set.  Indeed, a hitting set must include \(k\) vertices in each of the canonical copies of \(G\): otherwise one canonical copy of \(G\) would have at most \(k-1\) vertices in the hitting set, and we can construct an egg consisting of that canonical copy with all those hitting set vertices  deleted.  This establishes our claim.

We now need to bound the size of a minimum egg-cut in \(\mathcal{S}\).  Let \(E_1,E_2\in \mathcal{S}\) and \(A\subset V(G\square H)\) such that \(E_1\subset A\) and \(E_2\subset A^C\).  Since \(|V(G)|\geq 2k-1\), any two eggs in the same canonical copy of \(G\) must overlap, so we have that \(E_1\) and \(E_2\) are contained in different canonical copies of \(G\), say \(E_1\subset G\square w_1\) and \(E_2\subset G\square w_2\) where \(w_1\neq w_2\).

 By Theorem \ref{theorem:mengers}, we know that there are at least \(\lambda(H)\) edge-disjoint paths from \(v\square w_1\) and \(v\square w_2\) within \(v\square H\).  Thus if  \(v\square w_1\in A\) and \(v\square w_2\in A^C\), there must be at least \(\lambda(H)\) edges from the egg-cut in \(v\square H\).  Letting \(S\) denote the number of \(v\in V(G)\) that do not have both \(v\square w_1\in A\) and \(v\square w_2\in A^C\), we therefore have at least \((|V(G)|-|S|)\lambda(H)\) edges in the egg-cut
    
 We also know that if there exists a vertex \(v\in V(G)\) with \(v\square w_1\in A^C\), then there are at least \(\lambda(G)\) edges of the egg-cut in \(G\square w_1\); this is because  that number of edges are required to separate \(v\square w_1\) from \(E_1\) within that canonical copy of \(G\).  A similar result holds if there exists a vertex \(v\in V(G)\) with \(v\square w_2\in A\).  Let \(p\in \{0,1,2\}\) count how many of \(V(G\square w_1)\) and \(V(G\square w_2)\) are not completely contained in  \(A\) and \(A^C\), respectively.  We therefore have at least \(p\lambda(G)\) edges in our egg-cut within \(G\square w_1\) and  \(G\square w_2\).  Thus, at minimum, our egg-cut has
 \[(|V(G)|-|S|)\lambda(H)+p\lambda(G)\]
 edges.  To remove the dependence on \(|S|\), we split into the following three cases.
    
    \begin{itemize}
    \item[(i)] \(p=0\).  In this case, we have \(|S|=0\) since for all \(v\in V(G)\) we have \(v\square w_1\in A\) and \(v\square w_2\in A^C\).  This gives us at least \(|V(G)|\lambda(H)\) edges.
    \item[(ii)] \(p=1\).   Without loss of generality, say \(V(G\square v_2)\subset A^C\), and that \(V(G\square v_1)\not\subset A\).  In this case, the largest \(S\) could be is if it consists of all \(v\in V(G)\) such that \(v\square w_1\notin E_1\); since eggs take up all but \(k-1\) vertices of a canonical copy of \(G\), we have \(|S|\leq k-1\).  This means we have at least \(|V(G)-k+1)\lambda(H)+\lambda(G)\) edges.
    \item[(iii)] \(p=2\). Let \(T_1\subset V(G)\) be the set of all \(v\) such that \(v\square w_1\notin A\); and let \(T_2\subset V(G)\) be the set of all \(V\) such that \(v\square w_2\notin A^C\).  As argued the previous case, we have \(|T_i|\leq k-1\). Note that \(S=T_1\cup T_2\), meaning that \(|S|\leq |T_1|+|T_2|\leq 2k-2\).  This means we have at least  \((|V(G)|-2k+2)\lambda(H)+2\lambda(G)\) edges.
    \end{itemize}
  
The number of edges separating \(A\) and \(A^C\) is therefore at least
\[\min(|V(G)|\lambda(H),(|V(G)|-k+1)\lambda(H)+\lambda(G),(|V(G)|-2k+2)\lambda(H)+2\lambda(G)).\]
We can in fact omit the middle term:  if \((|V(G)|-k+1)\lambda(H)+\lambda(G)\leq |V(G)|\lambda(H)\), then
\[(-k+1)\lambda(H)+\lambda(G)\leq 0.\]
It follows then that
\begin{align*}(|V(G)|-k+1)\lambda(H)+\lambda(G)\,\geq\,& (|V(G)|-k+1)\lambda(H)+\lambda(G)+(-k+1)\lambda(H)+\lambda(G)\\\,=\,&(|V(G)|-2k+2)\lambda(H)+2\lambda(G)).
\end{align*}
Thus if the second term is smaller than the first term, then it is larger than the third term; so the second term is never the (unique) minimum.  The number of edges separating \(A\) and \(A^C\) is therefore at least
\[\min(|V(G)|\lambda(H),(|V(G)|-2k+2)\lambda(H)+2\lambda(G)).\]
Taking the minimum of this and the minimum size of our hitting set gives us the following lower bound on the order of the scramble \(\mathcal{S}\):
\[\min(k|V(H)|,|V(G)|\lambda(H),(|V(G)|-2k+2)\lambda(H)+2\lambda(G)).\]
This therefore serves as a lower bound on \(\textrm{sn}(G)\) as claimed.
\end{proof}

We immediately deduce the following result.
\begin{corollary}\label{corollary:k=1}
If \(G\) and \(H\) are connected graphs on at least \(2\) vertices, then
\[\textrm{sn}(G\square H)\geq \max(\min(|V(H)|,|V(G)|\lambda(H)),\min(|V(G)|,|V(H)|\lambda(G))\]
\end{corollary}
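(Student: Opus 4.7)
The plan is to apply Theorem \ref{theorem:sn_lower_bound_general} with $k=1$, and then apply it again with the roles of $G$ and $H$ swapped, taking the larger of the two resulting lower bounds.

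First, since $G$ is connected with at least $2$ vertices, it is $1$-vertex-connected and trivially has at least $2k-1 = 1$ vertex. Substituting $k=1$ into the statement of Theorem \ref{theorem:sn_lower_bound_general} yields
\[\sn(G \square H) \geq \min\bigl(|V(H)|,\, |V(G)|\lambda(H),\, |V(G)|\lambda(H) + 2\lambda(G)\bigr).\]
Because $H$ is connected with at least $2$ vertices we have $\lambda(G) \geq 1$, so the third term in the minimum is at least the second term and can be dropped. This gives the one-sided bound
\[\sn(G \square H) \geq \min\bigl(|V(H)|,\, |V(G)|\lambda(H)\bigr).\]

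Next, I would note that $G \square H$ and $H \square G$ are canonically isomorphic, hence have the same scramble number. Swapping the roles of $G$ and $H$ in the argument above (with $H$ now playing the role of the $k$-connected graph, again with $k=1$) yields the symmetric bound
\[\sn(G \square H) = \sn(H \square G) \geq \min\bigl(|V(G)|,\, |V(H)|\lambda(G)\bigr).\]
Since both inequalities hold simultaneously, $\sn(G \square H)$ is at least the maximum of the two right-hand sides, which is precisely the claimed inequality.

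There is no real obstacle here: the corollary is essentially a repackaging of Theorem \ref{theorem:sn_lower_bound_general} in the degenerate case $k=1$. The only minor observation to record is that the third term of the theorem's minimum is automatically dominated by the second when $k=1$, so the statement collapses to a two-term minimum. No additional constructions or scrambles are needed.
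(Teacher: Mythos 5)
Your proof is correct and follows essentially the same route as the paper: apply Theorem \ref{theorem:sn_lower_bound_general} with \(k=1\), observe that the third term \(|V(G)|\lambda(H)+2\lambda(G)\) dominates the second and can be dropped, and then invoke the symmetry \(G\square H\cong H\square G\). The only slip is a typo — the fact \(\lambda(G)\geq 1\) follows from \(G\) (not \(H\)) being connected on at least two vertices — and in any case \(2\lambda(G)\geq 0\) already suffices to discard the third term.
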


\begin{proof}
By the symmetry of assumptions on \(G\) and \(H\), it's enough to show
\[\textrm{sn}(G\square H)\geq \min(|V(H)|,|V(G)|\lambda(H)).\]
Since \(G\) is \(1\)-connected, we may apply Theorem \ref{theorem:sn_lower_bound_general} with \(k=1\) to find
\[\textrm{sn}(G\square H)\geq \min(|V(H)|,|V(G)|\lambda(H),|V(G)|\lambda(H)+2\lambda(G)).\]
The third term is always larger than the second, so we have \[\textrm{sn}(G\square H)\geq \min(|V(H)|,|V(G)|\lambda(H)),\]
as desired.
\end{proof}

We can slightly improve the bound from Theorem \ref{theorem:sn_lower_bound_general} in the case of \(k=2\).

\begin{proposition}
\label{proposition:k=2} 
If \(G\) and \(H\) are connected graphs with \(\kappa(G)\geq 2\), then
\[\textrm{sn}(G\square H)\geq \min\left(2|V(H)|,|V(G)|\lambda(H),(|V(G)|-2)\lambda(H)+2\delta(G)\right). \]
\end{proposition}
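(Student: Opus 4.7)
The plan is to adapt the proof of Theorem \ref{theorem:sn_lower_bound_general} in the specific case $k=2$, identifying where the argument gives $\lambda(G)$ edges but can actually be sharpened to $\delta(G)$ edges. I will use the same scramble $\mathcal{S}$: the eggs are the canonical copies of $G$ with $k-1=1$ vertex deleted. Since $\kappa(G)\geq 2$, each such set is connected, so $\mathcal{S}$ really is a scramble. The hitting set analysis is identical to that of Theorem \ref{theorem:sn_lower_bound_general}: a hitting set must contain at least $2$ vertices from each canonical copy of $G$ (otherwise an egg is missed), and choosing any $2$ vertices in each canonical copy suffices, giving $h(\mathcal{S})=2|V(H)|$.

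For the egg-cut analysis, let $A\subset V(G\square H)$ separate eggs $E_1\subset G\square w_1$ and $E_2\subset G\square w_2$, where necessarily $w_1\neq w_2$ (any two eggs in the same canonical copy of $G$ overlap, using $|V(G)|\geq 3$ if nontrivial, or handled directly for small cases). Write $E_i = (V(G)\setminus\{u_i\})\square w_i$. As in Theorem \ref{theorem:sn_lower_bound_general}, vertical edges contribute $\lambda(H)$ each for every $v\in V(G)$ with $v\square w_1\in A$ and $v\square w_2\in A^C$. The crux is the horizontal contribution: define $T_1=\{v:v\square w_1\notin A\}$ and $T_2=\{v:v\square w_2\notin A^C\}$. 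Because $E_1\subset A$ forces $T_1\subseteq\{u_1\}$, the set $T_1$ is either empty or equals $\{u_1\}$; likewise for $T_2$. \textbf{This is the key point}: when $T_1=\{u_1\}$ is nonempty, the horizontal cut inside $G\square w_1$ separates the single vertex $u_1\square w_1$ from the rest, and therefore contains \emph{all} edges of $G$ incident to $u_1$, giving at least $\deg_G(u_1)\geq \delta(G)$ edges rather than merely $\lambda(G)$.

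Letting $p\in\{0,1,2\}$ count how many of $T_1,T_2$ are nonempty, and letting $S=T_1\cup T_2$, the number of edges in the cut is at least $(|V(G)|-|S|)\lambda(H)+p\cdot\delta(G)$. Running through the three cases as in Theorem \ref{theorem:sn_lower_bound_general} yields the respective bounds
\[
|V(G)|\lambda(H),\quad (|V(G)|-1)\lambda(H)+\delta(G),\quad (|V(G)|-2)\lambda(H)+2\delta(G).
\]
As in the original proof, the middle term can be discarded: if $(|V(G)|-1)\lambda(H)+\delta(G)\leq |V(G)|\lambda(H)$ then $\delta(G)\leq \lambda(H)$, and adding $\delta(G)-\lambda(H)\leq 0$ to the middle term shows it is at least the third term, so the middle term is never the unique minimum. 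Combining with the hitting set bound yields the claimed lower bound on $\|\mathcal{S}\|$, and hence on $\sn(G\square H)$.

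The main obstacle is really just the bookkeeping: verifying that when $k=2$ the set $T_i$ can contain only the single omitted vertex $u_i$, and therefore that the horizontal cut within $G\square w_i$ is forced to cut all $\deg_G(u_i)$ edges at $u_i$. No additional ideas beyond Theorem \ref{theorem:sn_lower_bound_general} are needed; the improvement from $\lambda(G)$ to $\delta(G)$ is exactly the observation that separating a single vertex from the rest of a connected graph requires cutting every edge at that vertex.
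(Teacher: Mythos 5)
Your proposal is correct and follows essentially the same route as the paper: the paper's proof of Proposition \ref{proposition:k=2} simply reruns the argument of Theorem \ref{theorem:sn_lower_bound_general} with \(k=2\) and makes exactly your key observation — that the at most one vertex of \(G\square w_i\) on the wrong side must be separated from the rest of its canonical copy, forcing all \(\deg_G(u_i)\geq\delta(G)\) incident edges into the cut rather than merely \(\lambda(G)\). Your additional bookkeeping (the explicit hitting-set count and the elimination of the middle term) matches the corresponding steps already carried out in the proof of Theorem \ref{theorem:sn_lower_bound_general}.
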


\begin{proof}
The proof is nearly identical to that of Theorem \ref{theorem:sn_lower_bound_general} with \(k=2\); note that \(G\) must have least \(2\cdot 2-1=3\) vertices by the \(2\)-connectivity assumption.  The one difference is when there are vertices in \(G\square w_1\) not in \(A\) (or similarly when there are vertices in \(G\square w_2\) not in \(A^C\)).  Since \(k-1=1\), there must be exactly one such vertex in \(G\square w_1\) (or in \(G\square w_2\)), meaning that there are at least \(\delta(G)\) (rather than just \(\lambda(G)\)) edges separating \(A\) from \(A^C\) in \(G\square w_1\) (or in \(G\square w_2\)).  This gives us the claimed formula.
\end{proof}

\section{Applications to gonality}
\label{section:applications_to_gonality}

We know that the scramble number of a graph is a lower bound on the gonality of that graph.  We also know by \cite{gonality_product} that 
\[\textrm{gon}(G\square H)\leq |V(H)|\textrm{gon}(G)\]
and
\[\textrm{gon}(G\square H)\leq |V(G)|\textrm{gon}(H).\]
In this section we show that these upper bounds match our lower bounds on scramble number for many families of products graphs, allowing us to determine their gonality.  We separate our graphs by the choice of \(k\) from Theorem \ref{theorem:sn_lower_bound_general}.  All graphs throughout this section are assumed to be connected.

\subsection{Applications with \(k=1\).}  We start with the following result.  \begin{theorem}\label{theorem:product_gon}
\begin{itemize}  Let \(G\) and \(H\) be connected graphs.  
\item[(i)]If \(G\) is a tree and \(H\) is a graph with \(\frac{|V(H)|}{\lambda(H)}\leq |V(G)|\), then \(\textrm{sn}(G\square H)=\textrm{gon}(G\square H)=|V(H)|\).
\item[(ii)]  If \(G\) and \(H\) are graphs with \(\gon(H)=\lambda(H)\) and \(|V(G)|\leq \frac{|V(H)|}{\lambda(H)}\), then \(\textrm{sn}(G\square H)=\textrm{gon}(G\square H)=|V(G)|\cdot \lambda(H)\).
\end{itemize}
\end{theorem}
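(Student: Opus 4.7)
The plan is to prove both parts by squeezing the quantity of interest between a known upper bound on $\gon(G\square H)$ and the lower bound on $\sn(G\square H)$ from Corollary \ref{corollary:k=1}, then using the inequality $\sn \leq \gon$ to close the sandwich.

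For part (i), since $G$ is a tree we have $\gon(G)=1$, so the upper bound from \cite{gonality_product} recalled at the start of this section gives
\[\gon(G\square H)\leq |V(H)|\gon(G)=|V(H)|.\]
For the matching lower bound I would apply Corollary \ref{corollary:k=1}, which says in particular that
\[\sn(G\square H)\geq \min(|V(H)|,|V(G)|\lambda(H)).\]
The hypothesis $|V(H)|/\lambda(H)\leq |V(G)|$ rearranges to $|V(H)|\leq |V(G)|\lambda(H)$, so the minimum equals $|V(H)|$. Chaining these with $\sn(G\square H)\leq \gon(G\square H)$ forces equality throughout.

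For part (ii), the upper bound from \cite{gonality_product} gives
\[\gon(G\square H)\leq |V(G)|\gon(H)=|V(G)|\lambda(H),\]
using the hypothesis $\gon(H)=\lambda(H)$. For the lower bound I again invoke Corollary \ref{corollary:k=1} in the form
\[\sn(G\square H)\geq \min(|V(H)|,|V(G)|\lambda(H)),\]
and now the hypothesis $|V(G)|\leq |V(H)|/\lambda(H)$ rearranges to $|V(G)|\lambda(H)\leq |V(H)|$, so the minimum equals $|V(G)|\lambda(H)$. The sandwich $|V(G)|\lambda(H)\leq \sn(G\square H)\leq \gon(G\square H)\leq |V(G)|\lambda(H)$ again gives equality.

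There is no real obstacle here: both parts amount to checking that the two hypotheses in each case are precisely what is needed to make the appropriate argument of the $\min$ in Corollary \ref{corollary:k=1} coincide with the appropriate argument of the $\min$ in the $\gon$ upper bound. The only care needed is to match the correct orientation of $G$ and $H$ in each application (in part (i) we exploit the $(|V(H)|,|V(G)|\lambda(H))$ pairing, and in part (ii) we use the same pairing but select the other term of the minimum), and to note that $G\square H$ is connected whenever $G$ and $H$ are so that $\sn\leq \gon$ applies.
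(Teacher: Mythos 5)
Your proposal is correct and follows essentially the same route as the paper: both arguments sandwich the quantity between the lower bound $\min(|V(H)|,|V(G)|\lambda(H))\leq \sn(G\square H)$ from Corollary \ref{corollary:k=1} and the upper bound $\gon(G\square H)\leq \min(|V(G)|\gon(H),|V(H)|\gon(G))$ from \cite{gonality_product}, using each hypothesis to identify which term of the minimum is achieved. No differences worth noting.
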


\begin{proof}

From Corollary \ref{corollary:k=1} we have
\[\min(|V(H)|,|V(G)|\cdot \lambda(H))\leq\textrm{sn}(G\square H).\]

For (i), assume \(G\) is a tree and \(\frac{|V(H)|}{\lambda(H)}\leq |V(G)|\).  
First note that \(|V(H)|=\min(|V(H)|,|V(G)|\cdot \lambda(H))\leq\textrm{sn}(G\square H)\), and that \(\textrm{gon}(G\square H)\leq |V(H)|\cdot\gon(G)=|V(H)|\).  Since the lower bound on scramble number equals the upper bound on gonality, we may conclude that \(\textrm{sn}(G\square H)=\textrm{gon}(G\square H)=|V(H)|\).

 For (ii), assume \(\gon(H)=\lambda(H)\) and \(|V(G)|\leq \frac{|V(H)|}{\lambda(H)}\). We have \(|V(G)|\cdot \lambda(H)=\min(|V(H)|,|V(G)|\cdot \lambda(H))\leq \textrm{sn}(G\square H)\).  On the other hand, we have \(\textrm{gon}(G\square H)\leq |V(G)|\cdot \textrm{gon}(H)=|V(G)|\cdot \lambda(H)\). Since the lower bound on scramble number equals the upper bound on gonality, we may conclude that \(\textrm{sn}(G\square H)=\textrm{gon}(G\square H)=|V(G)|\cdot \lambda(H)\).

\end{proof}

We now apply this theorem over the course of several corollaries.

\begin{corollary}\label{corollary:tree_and_gon=lambda}
Let \(G\) be a tree and \(H\) a \(k\)-edge-connected graph of gonality \(k\).  Then
\[\textrm{sn}(G\square H)=\textrm{gon}(G\square H)=\textrm{min}(|V(H)|,k|V(G)|).\]
\end{corollary}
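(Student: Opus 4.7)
The plan is to reduce this corollary directly to Theorem \ref{theorem:product_gon} by first pinning down \(\lambda(H)\) exactly and then splitting into the two cases according to which term achieves the minimum in \(\min(|V(H)|, k|V(G)|)\).

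First I would observe that \(\lambda(H) = k\). By hypothesis \(\lambda(H)\geq k\), and by Lemma \ref{lemma:edge-connectivity} combined with the universal bound \(\sn(H)\leq \gon(H)=k\) (equivalently, by the chain \(\lambda(H)\leq \tw(H)\) when \(H\) is simple, or more directly by the general fact that \(\min(\lambda(H),|V(H)|)\leq \gon(H)\) with \(\lambda(H)\leq |V(H)|\)), we get \(\lambda(H)\leq k\). Hence \(\lambda(H)=k\). This means the hypothesis \(\gon(H)=\lambda(H)\) in part (ii) of Theorem \ref{theorem:product_gon} is satisfied, and \(|V(G)|\lambda(H)=k|V(G)|\).

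Now I split into two cases. In the case \(|V(H)|\leq k|V(G)|\), we have \(\tfrac{|V(H)|}{\lambda(H)}\leq |V(G)|\), so part (i) of Theorem \ref{theorem:product_gon} applies (with \(G\) the tree and \(H\) arbitrary), yielding
\[\sn(G\square H) = \gon(G\square H) = |V(H)| = \min(|V(H)|, k|V(G)|).\]
In the case \(k|V(G)|\leq |V(H)|\), we have \(|V(G)|\leq \tfrac{|V(H)|}{\lambda(H)}\), so part (ii) of Theorem \ref{theorem:product_gon} applies, yielding
\[\sn(G\square H) = \gon(G\square H) = |V(G)|\cdot \lambda(H) = k|V(G)| = \min(|V(H)|, k|V(G)|).\]
The two cases overlap consistently at \(|V(H)| = k|V(G)|\), so together they cover all possibilities and give the claimed equality.

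The main obstacle, if any, is just the bookkeeping to confirm that \(\lambda(H)=k\) so that part (ii) can be invoked; once that is done, the corollary is a direct corollary of the previous theorem with no further combinatorial work required.
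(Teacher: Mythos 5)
Your proof follows essentially the same route as the paper's: split on which term realizes \(\min(|V(H)|,k|V(G)|)\) and invoke Theorem \ref{theorem:product_gon}(i) or (ii) accordingly. The one soft spot is your preliminary claim that \(\lambda(H)=k\) always follows from the hypotheses. The justifications you offer for \(\lambda(H)\leq k\) (namely \(\lambda(H)\leq |V(H)|\), or the chain \(\lambda\leq\tw\)) are only valid for simple graphs, whereas the paper's conventions allow multiple edges; for example, the two-vertex graph with three parallel edges is a \(2\)-edge-connected graph of gonality \(2\) with \(\lambda(H)=3\). Lemma \ref{lemma:edge-connectivity} together with \(\sn(H)\leq\gon(H)\) only yields \(\min(\lambda(H),|V(H)|)\leq k\). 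The gap is easily patched: if \(\lambda(H)>k\), then \(|V(H)|\leq k\leq k|V(G)|\), so you land in your first case, which uses only \(\lambda(H)\geq k\) and never needs part (ii); and whenever \(k|V(G)|\leq |V(H)|\) with \(|V(G)|\geq 2\), the inequality \(\min(\lambda(H),|V(H)|)\leq k\) does force \(\lambda(H)=k\), so part (ii) applies as you intend. (The paper's own proof asserts the hypotheses of part (ii) without comment, so you are if anything being more careful; just be aware that the stated justification as written does not cover multigraphs.)
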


\begin{proof}  If \(\textrm{min}(|V(H)|,k|V(G)|)=|V(H)|\), then all the assumptions of Theorem \ref{theorem:product_gon}(i) are satisfied, and so \(\textrm{sn}(G\square H)=\textrm{gon}(G\square H)=|V(H)|\).  If \(\textrm{min}(|V(H)|,k|V(G)|)=k|V(G)|\), then all the assumptions of Theorem \ref{theorem:product_gon}(ii) are satisfied, and so \(\textrm{sn}(G\square H)=\textrm{gon}(G\square H)=k|V(G)|\).  
\end{proof}

Note that \(H\) can be any of the graphs from Example \ref{example:lambda=gon}, including trees, complete multipartite graphs, bridgeless hyperelliptic graphs, and simple bridgeless trivalent graphs of gonality three.  (We remark that the gonality of \(G\square H\) with \(G\) a tree and \(H\) either a tree or a complete graph was already known by \cite[Propositions 11, 12]{gonality_product}.)

For integers \(\ell,m,n\geq 2\), we let \(G_{m,n}:=P_m\square P_n\) denote the \(m\times n\) two-dimensional grid graph, and we let \(G_{\ell,m,n}:=P_\ell\square P_m\square P_n\) denote the \(\ell\times m\times n\) three-dimensional grid graph. 

\begin{corollary}
If \(\ell,m,n\geq 2\) with \(\ell\geq mn/2\), then
\[\textrm{sn}(G_{\ell,m,n})=\textrm{gon}(G_{\ell,m,n})=mn.\]
\end{corollary}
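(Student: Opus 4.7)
The plan is to reduce the claim to Theorem \ref{theorem:product_gon}(i) by writing the three-dimensional grid as an iterated product. Using associativity of the Cartesian product, I have $G_{\ell,m,n} = P_\ell \square (P_m \square P_n) = P_\ell \square G_{m,n}$, so I take $G := P_\ell$ (a tree on $\ell$ vertices) and $H := G_{m,n}$ (a connected graph on $mn$ vertices) as the factors in the theorem. Note that I am \emph{not} trying to apply Corollary \ref{corollary:tree_and_gon=lambda}: for $\min(m,n)\geq 3$ we have $\gon(G_{m,n})=\min(m,n)\neq 2 = \lambda(G_{m,n})$, so the hypothesis $\gon(H)=\lambda(H)$ fails. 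However, part (i) of the product theorem requires only that $G$ be a tree, together with a size condition.

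To apply Theorem \ref{theorem:product_gon}(i), I must verify its hypothesis $|V(H)|/\lambda(H) \leq |V(G)|$, which in this notation reads $mn/\lambda(G_{m,n}) \leq \ell$. The key input is therefore the computation $\lambda(G_{m,n}) = 2$. The upper bound $\lambda(G_{m,n}) \leq 2$ follows since, with $m,n \geq 2$, any corner vertex of the grid has valence exactly $2$, so $\lambda(G_{m,n}) \leq \delta(G_{m,n}) = 2$. For the matching lower bound, $G_{m,n}$ is connected and bridgeless (every edge lies on a four-cycle in the grid for $m,n\geq 2$), giving $\lambda(G_{m,n}) \geq 2$. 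Hence $\lambda(G_{m,n}) = 2$, and the hypothesis becomes $mn/2 \leq \ell$, which is exactly our standing assumption $\ell \geq mn/2$.

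Applying Theorem \ref{theorem:product_gon}(i) with these choices then yields
\[\textrm{sn}(G_{\ell,m,n}) = \textrm{sn}(P_\ell \square G_{m,n}) = \textrm{gon}(P_\ell \square G_{m,n}) = |V(G_{m,n})| = mn,\]
which is the desired equality. The argument presents essentially no obstacle beyond invoking the theorem: the only substantive step is the trivial computation $\lambda(G_{m,n})=2$, and everything else is bookkeeping about the associativity of $\square$ and the value $|V(G_{m,n})|=mn$.
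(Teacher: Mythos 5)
Your proof is correct and follows essentially the same route as the paper: both decompose $G_{\ell,m,n}$ as $P_\ell\square G_{m,n}$, note that $P_\ell$ is a tree and $\lambda(G_{m,n})=2$, and apply Theorem \ref{theorem:product_gon}(i). The only difference is that you spell out the (easy) verification that $\lambda(G_{m,n})=2$, which the paper simply asserts.
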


We remark that this gonality is conjectured to hold for all values of \(\ell,m,n\); see \cite[Conjecture 4.6]{db}.

\begin{proof}
Write \(G_{\ell,m,n}=P_{\ell}\square G_{m,n}\) as a product of the path on \(\ell\)-vertices \(G=P_\ell\) and the \(m\times n\) grid graph \(H=G_{m,n}\).  Note that the first graph is a tree, and the second graph has edge-connectivity 2, so we have \(|V(H)|/\lambda(H)=mn/2\leq \ell= |V(G)|\).  This allows us to conclude by Theorem \ref{theorem:product_gon}(i) that the gonality of \(G_{\ell,m,n}\) is equal to \(|V(H)|=m n\).  
\end{proof}

\begin{corollary}
Let \(T\) be a tree on \(m\geq 2\) vertices and \(G\) be a graph on \(n\leq m\) vertices.  Then
\[\textrm{sn}(G\square K_\ell \square T)=\textrm{gon}(G\square K_\ell \square T)=\ell n.\]
\end{corollary}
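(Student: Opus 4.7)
The plan is to decompose $G\square K_\ell\square T = G\square H$ with $H := K_\ell\square T$, and apply Theorem~\ref{theorem:product_gon}(ii) to this product.

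The upper bound is essentially free: the general inequality $\gon(A\square B)\leq |V(A)|\gon(B)$ together with $\gon(K_\ell\square T) = \ell$ from Example~\ref{example:known_products} gives
\[\gon(G\square H)\leq n\cdot\gon(H) = n\ell.\]
For the lower bound, I would verify the hypotheses of Theorem~\ref{theorem:product_gon}(ii): namely $\gon(H)=\lambda(H)$ and $|V(G)|\leq |V(H)|/\lambda(H)$. Since $|V(H)| = \ell m$ and the assumption $n\leq m$ is in place, the numerical condition reduces to $\lambda(H)\leq \ell$; combined with the already-known $\gon(H)=\ell$, everything hinges on establishing the identity
\[\lambda(K_\ell\square T) = \ell.\]

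Proving this identity is what I expect to be the main obstacle. The upper bound $\lambda(H)\leq\ell$ is immediate, since at any leaf $u$ of $T$ every vertex $v\square u$ has degree exactly $\ell$ in $H$. For $\lambda(H)\geq\ell$, I would fix an arbitrary non-trivial edge cut $(A,A^C)$ of $H$ and partition $V(T) = B_0\sqcup B_1\sqcup B_2$ according to whether $V(K_\ell\square u)$ lies entirely in $A$, entirely in $A^C$, or is split between them. If $B_2 = \emptyset$, connectivity of $T$ forces a tree edge $uu'$ with $u\in B_0$ and $u'\in B_1$, whose $\ell$ parallel product-edges all lie in the cut. If $B_2\neq \emptyset$, any $u\in B_2$ contributes at least $\lambda(K_\ell) = \ell-1$ cut edges from within $K_\ell\square u$, and a short case analysis on a tree-neighbor $u'$ of $u$ (which exists since $m\geq 2$) produces at least one additional cut edge --- either from a second split copy when $u'\in B_2$, or from the tree-direction edges across the $K_\ell\square u, K_\ell\square u'$ interface when $u'\in B_0\cup B_1$. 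A straightforward calculation in each sub-case shows the cut has at least $\ell$ edges.

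With $\lambda(H) = \ell$ secured, Theorem~\ref{theorem:product_gon}(ii) yields $\sn(G\square H) = \gon(G\square H) = n\lambda(H) = n\ell$, matching the upper bound and completing the proof.
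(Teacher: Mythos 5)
Your proposal is correct and follows the same overall route as the paper: decompose the triple product as \(G\square H\) with \(H=K_\ell\square T\), note \(\gon(H)=\ell\), and apply Theorem~\ref{theorem:product_gon}(ii), for which the only real work is showing \(\lambda(K_\ell\square T)=\ell\). The one place you diverge is in that step: the paper gets \(\lambda(H)\geq\ell\) essentially for free by citing \v{S}pacapan's formula for the vertex-connectivity of a Cartesian product, computing \(\kappa(K_\ell\square T)=\min((\ell-1)m,\,\ell,\,\ell)=\ell\), and then sandwiching \(\ell=\kappa(H)\leq\lambda(H)\leq\gon(H)=\ell\); you instead give a direct, self-contained edge-cut argument, partitioning the canonical copies \(K_\ell\square u\) by whether they lie in \(A\), in \(A^C\), or are split. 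Your case analysis does close (a split copy contributes \(b(\ell-b)\geq \ell-1\) internal cut edges, and either a second split neighbor or the \(b\) tree-direction edges into an unsplit neighbor supplies the rest, giving \(b(\ell-b+1)\geq\ell\)), so the argument is sound; it costs a bit more writing but avoids importing the connectivity formula and the \(\kappa\leq\lambda\) detour. Either way the conclusion \(\sn(G\square H)=\gon(G\square H)=n\lambda(H)=\ell n\) follows identically.
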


\begin{proof}
Let \(H=K_\ell\square T\).  First we claim that that \(\lambda(H)=\textrm{gon}(H)=\ell\).  We certainly have \(\lambda(H)\leq\textrm{gon}(H)\), and by \cite[Proposition 12]{gonality_product} we know \(\textrm{gon}(H)=\ell\).  To see that \(\lambda(H)\geq \ell\), we use \cite[Theorem 1]{connectivity_products} to compute the vertex-connectivity \(\kappa(K_\ell \square T)\) to be
\begin{align*}
\kappa(K_\ell \square T)=&\min(\kappa(K_\ell)|V(T)|,\kappa(T)|V(K_\ell)|,\delta(K_\ell)+\delta(T))
\\=&\min((\ell-1)\cdot m,1\cdot \ell,\ell-1+1)=\ell.
\end{align*}
We thus have \(\ell=\kappa(K_\ell \square T)\leq \lambda(K_\ell \square T)\leq \textrm{gon}(K_\ell \square T)=\ell\), so all these numbers are equal to \(\ell\).

Since \(\lambda(H)=\textrm{gon}(H)=\ell\), we may apply Theorem \ref{theorem:product_gon}(ii) whenever taking the product of \(H\) with a graph with at most \(|V(H)|/\lambda(H)=\ell m/\ell=m\) vertices.  Since \(n\leq m\), we conclude that
\[\textrm{sn}(G\square H)=\textrm{gon}(G\square H)=|V(G)|\cdot \lambda(H)=\ell n.\]
\end{proof}

A special case of this is any product of the form \(K_\ell\square T_1\square T_2\) where \(T_1\) and \(T_2\) are trees on \(m\) and \(n\) vertices (if \(n\leq m\), then we use \(G=T_2\) and \(T=T_1\)). In this case we find 
\(\textrm{sn}(K_\ell\square T_1\square T_2)=\textrm{gon}(K_\ell\square T_1\square T_2)=\ell\cdot \min(m,n)\)  Note that since the grid graph \(G_{m,n}\) is the product of two trees, this means
\(\textrm{sn}(K_\ell\square G_{m,n})=\textrm{gon}(K_\ell\square G_{m,n})=\ell\cdot \min(m,n)\).

\begin{corollary}\label{corollary:bridgeless_hyperelliptic}
If \(H\) is a bridgeless hyperelliptic graph on \(m\) vertices and \(G\) is a graph with \(n\leq m/2\) vertices, then \(\textrm{sn}(G\square H)=\textrm{gon}(G\square H)=2n\).
\end{corollary}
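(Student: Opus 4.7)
The plan is to deduce this corollary immediately from Theorem \ref{theorem:product_gon}(ii), so the entire proof reduces to checking that the hypotheses of that theorem are satisfied. First I would unpack what it means for \(H\) to be a bridgeless hyperelliptic graph on \(m\) vertices: by the definition of hyperelliptic given just after the introduction of gonality, we have \(m \geq 3\) and \(\textrm{gon}(H) = 2\). Next, since \(H\) is bridgeless we have \(\lambda(H) \geq 2\); combined with Example \ref{example:lambda=gon}, which explicitly records that any bridgeless hyperelliptic graph on more than two vertices satisfies \(\lambda = \textrm{gon} = 2\), we conclude \(\lambda(H) = \textrm{gon}(H) = 2\).

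With this in hand, the hypothesis \(n \leq m/2\) rewrites as \(|V(G)| \leq |V(H)|/\lambda(H)\), placing us precisely in the setting of Theorem \ref{theorem:product_gon}(ii). Applying that theorem gives
\[
\textrm{sn}(G \square H) \;=\; \textrm{gon}(G \square H) \;=\; |V(G)| \cdot \lambda(H) \;=\; 2n,
\]
which is the desired conclusion.

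The proof is essentially a one-line specialization, so there is no real obstacle. The only point that could require a brief justification is the equality \(\lambda(H) = 2\) (not merely \(\lambda(H) \geq 2\)), and this is already handled by the reference to Example \ref{example:lambda=gon}. All the technical work — both the lower bound via the canonical-copy scramble and the matching upper bound \(\textrm{gon}(G \square H) \leq |V(G)|\,\textrm{gon}(H)\) — is absorbed into Theorem \ref{theorem:product_gon}(ii), so nothing further needs to be done here.
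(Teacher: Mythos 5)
Your proposal is correct and follows essentially the same route as the paper: verify \(\lambda(H)=\gon(H)=2\) (the paper derives \(\lambda(H)\le 2\) from \(\min(\lambda(H),|V(H)|)\le\gon(H)\), which is the same fact recorded in Example \ref{example:lambda=gon}) and then apply Theorem \ref{theorem:product_gon}(ii) with \(n\le m/2=|V(H)|/\lambda(H)\). No gaps.
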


\begin{proof}
By assumption, \(\lambda(H)\geq 2\). Recall that as \(H\) is hyperelliptic, we have \(\gon(H)=2\) and \(V(H)\geq 3\). Since \(\min(\lambda(H),|V(H)|)\leq\gon(H)=2\), it follows that  \(\lambda(H)\leq 2\), so in fact \(\lambda(H)=\textrm{gon}(H)=2\).  This lets us apply Theorem \ref{theorem:product_gon}(ii) whenever we take the product of \(H\) with a graph with \(n\) vertices where \(n\) is at most \(|V(H)|/\lambda(H)=m/2\), which is precisely the assumed set-up.  This gives us a scramble number and a gonality of \(n\cdot \lambda(H)=2n\), as claimed.
\end{proof}

A familiar example of such a graph \(H\) is the cycle graph \(C_m\).  Thus the product of any graph with a sufficiently large cycle has known gonality.

\begin{corollary}
Let \(K_{m,n}\) denote the complete bipartite graph on \(m,n\) vertices with \(m\leq n\).  If \(G\) is a graph on \(\ell\) vertices with \(\ell\leq (m+n)/m\), then
\[\textrm{sn}(G\square K_{m,n})=\textrm{gon}(G\square K_{m,n})=\ell m.\]
\end{corollary}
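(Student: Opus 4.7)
The plan is to recognize this as a direct application of Theorem \ref{theorem:product_gon}(ii), with $H = K_{m,n}$ playing the role of the second factor. The key preliminary step is to verify that $\lambda(K_{m,n}) = \gon(K_{m,n}) = m$, after which the inequality $\ell \leq (m+n)/m = |V(H)|/\lambda(H)$ is precisely the hypothesis needed.

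First I would compute the gonality of $H = K_{m,n}$. Since $K_{m,n}$ is a complete multipartite graph, Example \ref{example:lambda=gon} gives $\lambda(K_{m,n}) = \gon(K_{m,n}) = m + n - \max(m,n) = m$, where the maximum is $n$ because $m \leq n$. (Alternatively, one observes that in $K_{m,n}$ every vertex on the smaller side has degree $n$ and every vertex on the larger side has degree $m$, so $\delta(K_{m,n}) = m$, and since $\lambda \leq \delta$ and $\lambda \geq \min(\lambda, |V|) = \gon = m$ by the standard chain of inequalities, we must have $\lambda = m$ as well.)

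Next, observe that the hypothesis $\ell \leq (m+n)/m$ can be rewritten as $|V(G)| \leq |V(H)|/\lambda(H)$. Together with $\gon(H) = \lambda(H) = m$, this is exactly the hypothesis of Theorem \ref{theorem:product_gon}(ii). Applying that theorem immediately yields
\[\textrm{sn}(G\square K_{m,n}) = \textrm{gon}(G\square K_{m,n}) = |V(G)| \cdot \lambda(K_{m,n}) = \ell m,\]
as claimed.

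There is no real obstacle here; the work was done in setting up Theorem \ref{theorem:product_gon}(ii), and this corollary is essentially a bookkeeping exercise verifying that complete bipartite graphs fit the framework. The only small thing worth being careful about is the direction of the inequality $m \leq n$ when identifying $\gon(K_{m,n}) = m$ rather than $n$.
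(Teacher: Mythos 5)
Your proposal is correct and follows essentially the same route as the paper: identify $\lambda(K_{m,n})=\gon(K_{m,n})=\min(m,n)=m$ via the complete multipartite formula, and then apply Theorem \ref{theorem:product_gon}(ii) with $|V(G)|=\ell\leq (m+n)/m=|V(H)|/\lambda(H)$. (Your parenthetical alternative derivation of $\lambda=m$ garbles the inequality chain slightly --- the bound from the literature is $\min(\lambda(G),|V(G)|)\leq\gon(G)$, not the reverse --- but your primary argument via the multipartite formula is exactly what the paper does and is sufficient.)
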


\begin{proof}
The edge-connectivity of \(K_{m,n}\) is \(\min(m,n)=m\), as is its gonality by \cite[Example 4.3]{treewidth}.  Thus allows us to apply Theorem \ref{theorem:product_gon}(ii) to \(G\square K_{m,n}\) whenever \(G\) has at most \((m+n)/m\) vertices, giving us the claimed formula.
\end{proof}

\subsection{Applications with \(k=2\).}

\begin{theorem}\label{theorem:gonality_two_cases}
Let \(G\) and \(H\) be connected graphs with \(\kappa(G)\geq 2\).

\begin{itemize}
    
    \item[(i)]   If \(\textrm{gon}(G)=2\), \(\frac{|V(H)|}{\lambda(H)}\leq \frac{1}{2}|V(G)|\), and \({|V(H)|}\leq \frac{1}{2}|V(G)|\lambda(H)+(\delta(G)-\lambda(H))\), then 
    \[\textrm{sn}(G\square H)=\textrm{gon}(G\square H)=2|V(H)|.\]
    
    \item[(ii)]  If \(\lambda(H)=\textrm{gon}(H)\), \(\frac{2|V(H)|}{\lambda(H)}\geq |V(G)|\), and \(\lambda(H)\leq\delta(G)\), then
     \[\textrm{sn}(G\square H)=\textrm{gon}(G\square H)=|V(G)|\lambda(H).\]
\end{itemize}

\end{theorem}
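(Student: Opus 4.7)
The plan is to prove both equalities by matching the upper bound on $\gon(G \square H)$ from \cite{gonality_product} with the lower bound on $\sn(G \square H)$ from Proposition \ref{proposition:k=2}, using that $\sn(G \square H) \leq \gon(G \square H)$ always. Because $\kappa(G) \geq 2$ is part of the standing hypothesis, Proposition \ref{proposition:k=2} applies directly and yields
\[
\sn(G \square H) \;\geq\; \min\bigl(2|V(H)|,\; |V(G)|\lambda(H),\; (|V(G)|-2)\lambda(H) + 2\delta(G)\bigr).
\]
The work in each case is then to identify which of the three terms realizes the minimum and to check that this minimum coincides with the general upper bound on the product's gonality.

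For part (i), I would first observe that $\gon(G \square H) \leq |V(H)|\gon(G) = 2|V(H)|$. I would then rewrite the two numerical hypotheses as $2|V(H)| \leq |V(G)|\lambda(H)$ and $2|V(H)| \leq (|V(G)|-2)\lambda(H) + 2\delta(G)$: the first is a direct rearrangement of $|V(H)|/\lambda(H) \leq |V(G)|/2$, and the second follows from $|V(H)| \leq \tfrac{1}{2}|V(G)|\lambda(H) + (\delta(G)-\lambda(H))$ by multiplying through by $2$ and collecting terms. Together these say exactly that $2|V(H)|$ realizes the minimum in the Proposition \ref{proposition:k=2} bound, so $\sn(G \square H) \geq 2|V(H)|$, and the chain $2|V(H)| \leq \sn(G \square H) \leq \gon(G \square H) \leq 2|V(H)|$ forces equality throughout.

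For part (ii), the approach is symmetric. From $\gon(H)=\lambda(H)$ we get $\gon(G \square H) \leq |V(G)|\gon(H) = |V(G)|\lambda(H)$. The hypothesis $2|V(H)|/\lambda(H) \geq |V(G)|$ rearranges to $|V(G)|\lambda(H) \leq 2|V(H)|$, and the hypothesis $\lambda(H) \leq \delta(G)$ gives $|V(G)|\lambda(H) = (|V(G)|-2)\lambda(H) + 2\lambda(H) \leq (|V(G)|-2)\lambda(H) + 2\delta(G)$. Hence $|V(G)|\lambda(H)$ realizes the minimum in the Proposition \ref{proposition:k=2} bound, so $\sn(G \square H) \geq |V(G)|\lambda(H)$, and again the matching upper and lower bounds squeeze $\sn(G \square H)$ and $\gon(G \square H)$ to this common value. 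I do not foresee any real obstacle: the theorem is essentially an algebraic repackaging of Proposition \ref{proposition:k=2} together with the standard product upper bound, and the only care needed is to match each numerical hypothesis with the correct comparison inside the $\min$.
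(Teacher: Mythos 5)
Your proposal is correct and follows essentially the same route as the paper: both apply Proposition \ref{proposition:k=2} (valid since $\kappa(G)\geq 2$), verify via the stated numerical hypotheses which term realizes the minimum, and squeeze against the upper bound $\gon(G\square H)\leq\min(|V(G)|\gon(H),|V(H)|\gon(G))$. Your version simply spells out the algebraic rearrangements that the paper leaves implicit in case (i); no gaps.
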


\begin{proof}
By Proposition \ref{proposition:k=2}, we know in both cases that \[\textrm{sn}(G\square H)\geq \min\left(2|V(H)|,|V(G)|\lambda(H),(|V(G)|-2)\lambda(H)+2\delta(G)\right). \]


For (i), our assumptions give us that the minimum of the three terms is \(2|V(H)|\), so this number is a lower bound on scramble number.    
We also have \(\textrm{sn}(G\square H)\leq\textrm{gon}(G\square H)\leq \textrm{gon}(G)\cdot |V(H)|=2|V(H)|\).  Since the upper and lower bounds agree, we have the claimed equality.

For (ii), since \(\lambda(H)\leq\delta(G)\), we have \(|V(G)|\lambda(H)\leq (|V(G)|-2)\lambda(H)+2\delta(G)\), so
\[\textrm{sn}(G\square H)\geq \min\left(2|V(H)|,|V(G)|\lambda(H)\right). \]
From here we get a lower bound of \(|V(G)|\lambda(H)\) and an upper bound of \(|V(G)|\textrm{gon}(H)\); these are equal, giving us the claimed result.
\end{proof}

Here are several applications of this result.

\begin{corollary}
If \(m,n\geq 2\), then
\[\textrm{gon}(C_m\square K_n)=\min(2n,m(n-1)).\]
If further \(m\geq 4\), then this is also equal to \(\textrm{sn}(C_m\square K_n)\).
\end{corollary}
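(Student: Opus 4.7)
The plan is to match the product-gonality upper bound against lower bounds coming from Theorem \ref{theorem:gonality_two_cases}(i) when $m$ is large, and from the rook's graph gonalities of Example \ref{example:known_products} when $m=3$. The upper bound $\gon(C_m\square K_n)\le\min(m(n-1),2n)$ is immediate from the inequality $\gon(G\square H)\le\min(|V(G)|\gon(H),|V(H)|\gon(G))$ of \cite{gonality_product}, applied with $\gon(C_m)=2$ and $\gon(K_n)=n-1$.

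For the main case $m\ge 4$, I would invoke Theorem \ref{theorem:gonality_two_cases}(i) with $G=C_m$ and $H=K_n$. The structural hypotheses $\kappa(C_m)=2$ and $\gon(C_m)=2$ are immediate. Unpacking the two numerical conditions using $\delta(C_m)=2$ and $\lambda(K_n)=n-1$, they reduce respectively to $m(n-1)\ge 2n$ and $m(n-1)\ge 4n-6$, both of which follow from the chain $m(n-1)\ge 4(n-1)=4n-4\ge\max(2n,4n-6)$ valid for $m\ge 4$ and $n\ge 2$. The theorem then yields $\sn(C_m\square K_n)=\gon(C_m\square K_n)=2n$, and since $m(n-1)\ge 2n$ in this range this common value equals $\min(2n,m(n-1))$, delivering both assertions of the corollary in one stroke.

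For the remaining case $m=3$, Theorem \ref{theorem:gonality_two_cases}(i) breaks once $n\ge 4$ (the condition $m(n-1)\ge 4n-6$ fails), so here I would exploit $C_3=K_3$ to view $C_3\square K_n$ as a rook's graph and invoke Example \ref{example:known_products}, which gives $\gon(K_k\square K_\ell)=(k-1)\ell$ whenever $k\le\min(\ell,5)$. For $n\ge 3$ the choice $k=3,\ell=n$ yields $\gon(K_3\square K_n)=2n=\min(2n,3(n-1))$; for $n=2$ the symmetry $K_3\square K_2\cong K_2\square K_3$ with the choice $k=2,\ell=3$ gives gonality $3=\min(4,3)$. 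Since the corollary only asserts $\sn=\gon$ for $m\ge 4$, no scramble lower bound is required in this sub-case.

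The main obstacle is the arithmetic: verifying the two numerical conditions of Theorem \ref{theorem:gonality_two_cases}(i) tightly enough to encompass the full range $m\ge 4$, $n\ge 2$, and confirming that the target $\min(2n,m(n-1))$ is actually equal to $2n$ there. Once this is done, identifying the leftover case $m=3$ as a small rook's graph closes out the $\gon$ equality for all remaining $(m,n)$ with $m,n\ge 2$.
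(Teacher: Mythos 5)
Your treatment of $m\ge 4$ is correct and in fact slightly cleaner than the paper's: the paper splits into $n\ge m$ (applying Theorem \ref{theorem:gonality_two_cases}(i) with $G=C_m$, $H=K_n$) and $n\le m$ (applying part (ii) with $G=K_n$, $H=C_m$), whereas your arithmetic shows part (i) alone covers the whole range $m\ge 4$, $n\ge 2$. Your $m=3$ reduction to the rook's graph $K_3\square K_n$ also matches the paper, which cites \cite[Theorem 17]{gonality_product} for exactly this.

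However, there is a genuine gap: the corollary asserts the formula for all $m\ge 2$, and you never treat $m=2$. You call $m=3$ "the remaining case," but $C_2$ (two vertices joined by two parallel edges) is a legitimate cycle graph here, and for it the claimed value is $\min(2n,2(n-1))=2n-2$, which is \emph{not} $2n$. This case cannot be absorbed into your machinery: $\kappa(C_2)=1$, so Theorem \ref{theorem:gonality_two_cases} does not apply with $G=C_2$, and with $G=K_n$, $H=C_2$ the numerical hypotheses of part (ii) are incompatible ($\frac{2|V(H)|}{\lambda(H)}\ge|V(G)|$ forces $n\le 2$ while $\lambda(H)\le\delta(G)$ forces $n\ge 3$). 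Indeed the scramble lower bound genuinely fails to reach $2n-2$ here, which is why the corollary only claims $\sn=\gon$ for $m\ge 4$. The paper closes this case with an entirely different argument: it takes a putative positive-rank divisor of degree $2n-3$, normalizes it via Lemma \ref{lemma:spencer} to have fewer than valence-many chips per vertex, burns one canonical copy of $K_n$ completely using Lemma \ref{lemma:complete_dhars}, and then a concavity count of burning edges into the $k$ unburned vertices of the other copy forces at least $2n$ chips there, a contradiction. Some argument of this kind (not a scramble bound) is needed to complete your proof.
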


\begin{proof}
If \(n=2\) then we have the result since we have the product of a cycle and a tree; assume for the remainder of the proof that \(n\geq 3\). For the moment we will also assume \(m\geq 4\); the remaining cases will be handled at the end.  Note that for \(m\geq 4\) we have \(\min(2n,m(n-1))=2n\).

If \(n\geq m\), set \(G=C_m\) and \(H=K_n\).  Then we have that \(\kappa(G)=\textrm{gon}(G)=2\); that \(\frac{|V(H)|}{\lambda(H)}=\frac{n}{n-1}\leq\frac{3}{2}\leq \frac{1}{2}|V(G)|\); and that \(\frac{1}{2}|V(G)|\lambda(H)+\delta(G)-\lambda(H)=\frac{m(n-1)}{2}+2-(n-1)=\frac{(m-2)(n-1)}{2}+2\geq(n-1)+2 >n=|V(H)|\) since \(m-2\geq 2\).  This lets us apply  Theorem \ref{theorem:gonality_two_cases}(i) to conclude
\[\textrm{sn}(C_m\square K_n)=\textrm{gon}(C_m\square K_n)=2|V(H)|=2n.\]

If \(n\leq m\), set  \(G=K_n\) and \(H=C_m\).  We have \(\kappa(G)=n-1\geq 2\); \(\lambda(H)=\textrm{gon}(G)=2\); \(\frac{2|V(H)|}{\lambda(H)}=|V(H)|=m\geq n=|V(H)|\); and \(\lambda(H)=2\leq n-1=\delta(G)\).  This lets us apply Theorem \ref{theorem:gonality_two_cases}(ii) to conclude
\[\textrm{sn}(C_m\square K_n)=\textrm{gon}(C_m\square K_n)=|V(G)|\lambda(H)=2n.\]

We can now handle those cases where \(m<4\).  If \(m=3\) then the cycle graph is the complete graph \(K_3\), and we already have this claimed gonality by \cite[Theorem 17]{gonality_product}.  Finally, suppose \(m=2\), so that \(\min(2n,m(n-1))=2n-2\).  Certainly we have \(\textrm{gon}(C_2\square K_n)\leq 2n-2\). Suppose for the sake of contradiction that \(\textrm{gon}(C_2\square K_n)<2n-2\), and let \(D\) be an effective positive rank divisor of degree \(2n-3\) on \(C_2\square K_n\).  By  Lemma \ref{lemma:spencer} we may assume that \(D\) has fewer than valence-many chips on each vertex, so that every vertex has at most \((n+1)-1=n\) chips on it. Let \(V(C_2)=\{u,v\}\). At least one of the canonical copies of \(K_n\) has at most \(n-2\) chips; say it is \(u\square K_n\).  Choose \(q\) a vertex in \(u\square K_n\) with no chips on it, and run Dhar's burning algorithm on \(D-(q)\).  By Lemma \ref{lemma:complete_dhars}, all of \(u\square K_n\) will burn.  Since \(D\) has positive rank, we know by Lemma \ref{lemma:dhars} that not all of \(v\square K_n\) burns; say there are \(k\) unburned vertices when the burning process stabilizes.  Note that \(k\geq 2\), since if \(k=1\) then a vertex must have valence many chips.  The total number of burning edges coming into those \(k\) vertices is then \(2k+k(n-k)\) (where \(2k\) come from the other canonical copy of \(K_n\), and \(k(n-k)\) come from the same canonical copy).  This expression is concave down as a function of \(k\), and so is minimized on the interval \(2\leq k\leq n\) at an endpoint.  At \(k=2\) we have \(2\cdot 2+2(n-2)=2n\) edges, and at \(k=n\) we have \(2n+n(n-n)=2n\) edges.  This means that these \(k\) vertices must have a total of at least \(2n\) chips, a contradiction to \(\deg(D)=2n-3\).  We conclude that \(\textrm{gon}(C_2\square K_n)=2n-2\).
\end{proof}

\begin{corollary}\label{corollary:2-connected_hyp}
If \(G\) and \(H\) are \(2\)-connected hyperelliptic graphs, then
\[\textrm{sn}(G\square H)=\textrm{gon}(G\square H)=\min(2|V(G)|,2|V(H)|).\]
\end{corollary}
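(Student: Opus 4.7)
The plan is to reduce the corollary to a direct application of Theorem~\ref{theorem:gonality_two_cases}(ii), after first extracting from the hypotheses the numerical parameters $\lambda$, $\delta$, $\kappa$, $\gon$ for each factor.

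First, I would unpack what ``$2$-connected hyperelliptic'' forces.  For each of $G$ and $H$ (call one of them $X$), hyperellipticity gives $\gon(X) = 2$ and $|V(X)| \geq 3$, while $2$-connectivity gives $\kappa(X) \geq 2$, hence $\lambda(X) \geq \kappa(X) \geq 2$.  On the other hand, Lemma~\ref{lemma:edge-connectivity} (or rather the bound $\min(\lambda(X), |V(X)|) \leq \gon(X)$ of \cite{ckk}) combined with $|V(X)| \geq 3$ yields $\lambda(X) \leq 2$.  Therefore $\lambda(X) = \gon(X) = 2$, and in particular $\delta(X) \geq \lambda(X) = 2$ for both $X = G$ and $X = H$.

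Next, by the symmetry of the statement we may assume without loss of generality that $|V(G)| \leq |V(H)|$, so that $\min(2|V(G)|, 2|V(H)|) = 2|V(G)|$.  I now verify the four hypotheses of Theorem~\ref{theorem:gonality_two_cases}(ii) applied to the pair $(G, H)$: (a) $\kappa(G) \geq 2$ holds by assumption; (b) $\lambda(H) = \gon(H) = 2$ by the previous paragraph; (c) $\tfrac{2|V(H)|}{\lambda(H)} = |V(H)| \geq |V(G)|$ by our WLOG assumption; (d) $\lambda(H) = 2 \leq \delta(G)$, again from the previous paragraph.  The theorem then gives
\[
\sn(G \square H) = \gon(G \square H) = |V(G)| \cdot \lambda(H) = 2|V(G)| = \min(2|V(G)|, 2|V(H)|),
\]
as desired.

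There is no serious obstacle here; the work is entirely in extracting $\lambda = \gon = 2$ from the hypothesis (which uses both the lower bound $\lambda \geq \kappa \geq 2$ and the upper bound from connectivity vs.\ gonality, together with the convention that hyperelliptic graphs have at least $3$ vertices) and then checking that the numerical inequalities of Theorem~\ref{theorem:gonality_two_cases}(ii) fall out automatically.  The only mild subtlety is remembering that $|V| \geq 3$ is needed so that $\min(\lambda, |V|) = \lambda$; without this one could not rule out $\lambda > 2$.
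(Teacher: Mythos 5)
Your proof is correct and follows essentially the same route as the paper: extract \(\kappa=\lambda=\gon=2\) for both factors (using hyperellipticity plus \(|V|\geq 3\) to cap \(\lambda\) at \(2\), and \(2\)-connectivity to force \(\lambda\geq 2\)) and then feed the parameters into Theorem \ref{theorem:gonality_two_cases}. The only cosmetic difference is that the paper normalizes to \(|V(H)|\leq|V(G)|\) and invokes part (i) of that theorem, whereas you take the opposite normalization and invoke part (ii); the two verifications are symmetric and both reduce to the same lower bound from Proposition \ref{proposition:k=2}.
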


\begin{proof}
Because both \(G\) and \(H\) are \(2\)-connected, we may assume without loss of generality that  \(|V(H)|\leq |V(G)|\), so \(\min(2|V(G)|,2|V(H)|)=2|V(H)|\).  Since \(G\) is \(2\)-connected and hyperelliptic and \(\lambda(G)\geq \kappa(G)\), we have \(\kappa(G)=\lambda(G)=\gon(G)=2\), as argued in the proof of Corollary \ref{corollary:bridgeless_hyperelliptic}; the same holds for \(H\).
We also have \(\delta(G)\geq \kappa(G)\geq 2=\lambda(H)\). Since \(\textrm{gon}(G)=2\) and \(\frac{2|V(H)|}{\lambda(H)}=|V(H)|\leq |V(G)|\), we may conclude by Theorem \ref{theorem:gonality_two_cases}(i) that
\[\textrm{sn}(G\square H)=\textrm{gon}(G\square H)=2|V(H)|=\min(2|V(G)|,2|V(H)|).\]
\end{proof}

This is a natural generalization of \cite[Proposition 5.4]{harp2020new}, which is the same result in the special case that \(G\) and \(H\) are both cycle graphs.  Some nice examples of graphs we get from this result are \(4\)-dimensional glued grid graphs of the form \(G_{2,2,m,n}\), since we can write \(G_{2,2,m,n}=G_{2,m}\square G_{2,n}\), and \(G_{2,m}\) and \(G_{2,n}\) satisfy the hypotheses of the corollary.

\subsection{Applications with \(k\geq 3\).}

When \(k\geq 3\), the term \((|V(G)|-2k+2)\lambda(H)+2\lambda(G)\) becomes cumbersome to work with.  We prove several results whose hypotheses ensure that this term is not the minimum of the three terms.

\begin{theorem}\label{theorem:gonality_three_cases}  Assume \(G\) and \(H\) are graphs with \(k\leq\kappa(G)\), \(|V(G)|\geq 2k-1\), and \(\lambda(G)\geq (k-1)\lambda(H)\).
\begin{itemize}
    \item[(i)]  If \(k|V(H)|\leq |V(G)|\lambda(H)\) and \(\textrm{gon}(G)=k\), then \(\textrm{sn}(G\square H)=\textrm{gon}(G\square H)=k|V(H)|\).
    \item[(ii)]  If \(|V(G)|\lambda(H)\leq k|V(H)|\) and \(\textrm{gon}(H)=\lambda(H)\), then \(\textrm{sn}(G\square H)=\textrm{gon}(G\square H)=|V(G)|\lambda(H).\)
\end{itemize}
\end{theorem}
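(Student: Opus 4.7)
The plan is to deduce both statements directly from the general lower bound in Theorem \ref{theorem:sn_lower_bound_general}, by using the hypothesis $\lambda(G)\geq(k-1)\lambda(H)$ to eliminate the awkward third term from the minimum, and then to match the resulting lower bound on scramble number with the known upper bound $\gon(G\square H)\leq\min(|V(G)|\gon(H),|V(H)|\gon(G))$ from \cite{gonality_product}.

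First I would verify that the hypotheses $\kappa(G)\geq k$ and $|V(G)|\geq 2k-1$ allow us to apply Theorem \ref{theorem:sn_lower_bound_general}, giving
\[
\sn(G\square H)\;\geq\;\min\bigl(k|V(H)|,\,|V(G)|\lambda(H),\,(|V(G)|-2k+2)\lambda(H)+2\lambda(G)\bigr).
\]
Next I would observe that under the assumption $\lambda(G)\geq(k-1)\lambda(H)$ the third term satisfies
\[
(|V(G)|-2k+2)\lambda(H)+2\lambda(G)\;\geq\;(|V(G)|-2k+2)\lambda(H)+2(k-1)\lambda(H)\;=\;|V(G)|\lambda(H),
\]
so it is always at least as large as the second term and can be dropped from the minimum. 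This collapses the lower bound to
\[
\sn(G\square H)\;\geq\;\min\bigl(k|V(H)|,\,|V(G)|\lambda(H)\bigr).
\]

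For part (i), the hypothesis $k|V(H)|\leq|V(G)|\lambda(H)$ identifies the minimum as $k|V(H)|$, so $\sn(G\square H)\geq k|V(H)|$; combining with the upper bound $\gon(G\square H)\leq|V(H)|\gon(G)=k|V(H)|$ and the chain $\sn\leq\gon$ forces equality throughout. For part (ii), the reverse inequality $|V(G)|\lambda(H)\leq k|V(H)|$ identifies the minimum as $|V(G)|\lambda(H)$, so $\sn(G\square H)\geq|V(G)|\lambda(H)$; combining with the upper bound $\gon(G\square H)\leq|V(G)|\gon(H)=|V(G)|\lambda(H)$ (using $\gon(H)=\lambda(H)$) again squeezes out equality.

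Since the theorem is essentially engineered so that the hypotheses eliminate the troublesome third term in Theorem \ref{theorem:sn_lower_bound_general} and match the relevant upper bound on gonality from \cite{gonality_product}, there is no real obstacle in the argument; the only point requiring care is the algebraic manipulation showing $(|V(G)|-2k+2)\lambda(H)+2\lambda(G)\geq|V(G)|\lambda(H)$ under the assumption on $\lambda(G)$, and this is an immediate one-line computation. The proof is therefore short and is a direct parallel of the arguments used in Theorem \ref{theorem:product_gon} and Theorem \ref{theorem:gonality_two_cases}, now for general $k\geq 3$.
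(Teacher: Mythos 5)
Your proposal is correct and follows essentially the same route as the paper's proof: apply Theorem \ref{theorem:sn_lower_bound_general}, use $\lambda(G)\geq(k-1)\lambda(H)$ to show the third term is at least $|V(G)|\lambda(H)$ and drop it, then match the resulting lower bound against the upper bound $\gon(G\square H)\leq\min(|V(G)|\gon(H),|V(H)|\gon(G))$ in each case. No gaps.
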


\begin{proof}
We have \[\textrm{sn}(G\square H)\geq \min\left(k|V(H)|,|V(G)|\lambda(H),(|V(G)|-2k+2)\lambda(H)+2\lambda(G)\right), \]
which we can rewrite as
\[\textrm{sn}(G\square H)\geq \min\left(k|V(H)|,|V(G)|\lambda(H),|V(G)|\lambda(H)+2(\lambda(G)-(k-1)\lambda(H))\right). \]
Since \(\lambda(G)\geq (k-1)\lambda(H)\), we have the simpler bound of 
\[\textrm{sn}(G\square H)\geq \min\left(k|V(H)|,|V(G)|\lambda(H)\right). \]
The claims in cases (i) and (ii) then come from the upper bound \[\textrm{gon}(G\square H)\leq\min(|V(G)|\gon(H),|V(H)|\gon(G))\] and the assumed hypotheses.
\end{proof}

We remark that for a simple graph \(G\), in order for \(\gon(G)=k\leq\kappa(G)\) as assumed in case (i), then \(\lambda(G)=k\) as well, so for \(k\geq 3\) we must have \(\lambda(H)=1\) for the assumption of \(\lambda(G)\geq (k-1)\lambda(H)\) to hold.  We summarize this in the following result.

\begin{corollary}  Assume \(G\) and \(H\) are graphs with \(k=\kappa(G)=\gon(G)\), \(|V(G)|\geq 2k-1\), \(\lambda(H)=1\), and \(|V(H)|\leq |V(G)|/k\).  Then \(\textrm{sn}(G\square H)=\textrm{gon}(G\square H)=k|V(H)|\).
\end{corollary}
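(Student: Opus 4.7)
The plan is to obtain this corollary as a direct application of Theorem~\ref{theorem:gonality_three_cases}(i). That theorem has five hypotheses, three of which are immediate from the present assumptions: namely $k \leq \kappa(G)$ (since $k = \kappa(G)$), $|V(G)| \geq 2k-1$, and $\gon(G) = k$. So only two conditions remain to be checked.

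First, we need $\lambda(G) \geq (k-1)\lambda(H)$. Since $\lambda(H) = 1$ by hypothesis, this reduces to $\lambda(G) \geq k-1$. The standard inequality $\kappa(G) \leq \lambda(G)$ combined with $\kappa(G) = k$ gives $\lambda(G) \geq k > k-1$, as needed. Second, we need $k|V(H)| \leq |V(G)|\lambda(H)$; again using $\lambda(H) = 1$, this becomes $k|V(H)| \leq |V(G)|$, i.e.\ $|V(H)| \leq |V(G)|/k$, which is exactly the remaining assumption.

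Applying Theorem~\ref{theorem:gonality_three_cases}(i) now yields $\sn(G\square H) = \gon(G\square H) = k|V(H)|$. There is essentially no obstacle here: the corollary simply packages part (i) of the theorem in the special case $\lambda(H) = 1$, where the awkward mixed hypothesis $\lambda(G)\geq(k-1)\lambda(H)$ is automatically implied by the connectivity hypothesis on $G$. As the remark preceding the statement explains, this case is moreover the only relevant one when $G$ is simple with $\kappa(G) = \gon(G) = k$ and $k \geq 3$, since then $\lambda(G) = k$ forces $\lambda(H)\leq k/(k-1) < 2$.
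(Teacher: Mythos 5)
Your proof is correct and matches the paper's (implicit) argument exactly: the corollary is stated as a direct specialization of Theorem \ref{theorem:gonality_three_cases}(i), and your verification of the hypotheses---in particular deriving $\lambda(G)\geq k-1$ from $\kappa(G)\leq\lambda(G)$ and $\kappa(G)=k$---is the intended reasoning. Nothing further is needed.
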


A concrete application of Theorem \ref{theorem:gonality_three_cases} is for \(3\)-dimensional toroidal grid graphs which are ``sufficiently oblong''.

\begin{corollary}
Let \(\ell,m,n\geq 2\) with \(\frac{2}{3}\ell m\leq n\) and  \(\max(\ell,m)\geq 3\).  Then
\[\sn(C_\ell\square C_m\square C_n)=\gon(C_\ell\square C_m\square C_n)=2\ell m.\]
\end{corollary}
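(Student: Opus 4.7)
The plan is to invoke Theorem~\ref{theorem:gonality_three_cases}(ii) with $G=C_\ell\square C_m$, $H=C_n$, and $k=3$. If its hypotheses can be checked, the theorem delivers immediately
\[
\sn(G\square H)\,=\,\gon(G\square H)\,=\,|V(G)|\lambda(H)\,=\,2\ell m,
\]
which matches the statement since $G\square H=C_\ell\square C_m\square C_n$.

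The hypotheses involving $H$ and the purely numerical ones are routine. The cycle $H=C_n$ satisfies $\lambda(H)=\gon(H)=2$ by Example~\ref{example:lambda=gon}, so the condition $\gon(H)=\lambda(H)$ of part (ii) holds. The inequality $|V(G)|\lambda(H)=2\ell m\leq 3n=k|V(H)|$ is exactly the hypothesis $\tfrac23\ell m\leq n$. The size requirement $|V(G)|\geq 2k-1=5$ is automatic, since $\max(\ell,m)\geq 3$ and $\min(\ell,m)\geq 2$ force $\ell m\geq 6$. Note also that the hypothesis $\tfrac23 \ell m \leq n$ together with $\ell m \geq 6$ gives $n\geq 4$, so $C_n$ is a genuine simple cycle.

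The remaining work is to verify the two connectivity conditions on $G=C_\ell\square C_m$, namely $\kappa(G)\geq 3$ and $\lambda(G)\geq(k-1)\lambda(H)=4$. Every vertex of $G$ has valence $4$ (two edges from each factor, with the understanding that in $C_2$ the two parallel edges each contribute one to the valence), so in particular $\lambda(G)\leq \delta(G)=4$; I would establish the matching lower bound $\lambda(G)\geq 4$ by a short direct edge-cut analysis, noting that any partition $(A,A^C)$ either has $A$ contained in a single canonical copy of $C_\ell$ or $C_m$ (in which case at least four edges leave $A$ through canonical copies of the other factor) or separates canonical copies, in which case at least $2\min(\ell,m)\geq 4$ edges are cut. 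For $\kappa(G)\geq 3$, when $\ell,m\geq 3$ the product $C_\ell\square C_m$ is a well-known $4$-vertex-connected simple graph; when one of $\ell,m$ equals $2$, a direct application of Menger's theorem produces three vertex-disjoint paths between any two vertices (one using a $C_2$-edge at each end, plus two going the ``long way'' around the other cycle).

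The main obstacle, though minor, is the case $\min(\ell,m)=2$: here $C_2$ is a multigraph with two parallel edges, so one cannot blindly appeal to product-connectivity formulas stated only for simple factors. This is why I would prefer to verify $\lambda(G)=4$ and $\kappa(G)\geq 3$ by direct inspection in that case rather than invoking a canned formula. Once these connectivity bounds are in place, all hypotheses of Theorem~\ref{theorem:gonality_three_cases}(ii) are satisfied, and the desired equality $\sn(C_\ell\square C_m\square C_n)=\gon(C_\ell\square C_m\square C_n)=2\ell m$ follows immediately.
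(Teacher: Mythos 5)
Your proposal is correct and follows essentially the same route as the paper: both apply Theorem \ref{theorem:gonality_three_cases}(ii) with \(G=C_\ell\square C_m\), \(H=C_n\), and \(k=3\), checking the same numerical and connectivity hypotheses. Your extra care in the \(\min(\ell,m)=2\) case is warranted --- there \(\kappa(C_\ell\square C_2)=3\) rather than \(4\) (each vertex has only three distinct neighbours), which the paper glosses over, though only \(\kappa(G)\geq 3\) is needed, so the conclusion is unaffected.
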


\begin{proof}  Let \(G=C_\ell\square C_m\) and \(H=C_n\).  Note that \(\kappa(G)=4>3\), that \(\kappa(G)=\ell m\geq 6>2\cdot 3-1\), and \(\lambda(G)=4=2\cdot 2=(3-1)\cdot\lambda(H)\), so the starting hypotheses of Theorem \ref{theorem:gonality_three_cases} are satisfied with \(k=3\).  Moreover, \(|V(G)|\lambda(H)=2\ell m\leq 3 n=3|V(H)|\), and \(\gon(H)=2=\lambda(H)\).  This allows us to apply Theorem \ref{theorem:gonality_three_cases}(ii) to obtain the claimed formula.  
\end{proof}

We remark that a similar result could have been obtained from Theorem \ref{theorem:gonality_two_cases}; however, it would have required the stronger assumption that \(\ell m\leq n\).

\begin{theorem}\label{theorem:many_equal_k}  Assume that \(G\) and \(H\) satisfy \[\kappa(G)=\lambda(G)=\textrm{gon}(G)=k\leq \lambda(H)\]
where \(|V(H)|\leq |V(G)|-2k+4\) and \(|V(G)|\geq 2k-1\).  Then
\[\textrm{sn}(G\square H)=\textrm{gon}(G\square H)=k\cdot |V(H)|.\]
\end{theorem}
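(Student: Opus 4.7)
The plan is to sandwich $\sn(G\square H)$ and $\gon(G\square H)$ between $k|V(H)|$ above and $k|V(H)|$ below, following the standard template used throughout this section. The upper bound is immediate from the product inequality recalled at the start of Section \ref{section:applications_to_gonality}: since $\gon(G)=k$, we have
\[\sn(G\square H)\leq \gon(G\square H)\leq |V(H)|\gon(G)=k|V(H)|.\]
So all the work is in the lower bound on $\sn(G\square H)$.

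The key input will be Theorem \ref{theorem:sn_lower_bound_general}, applied with the given $k$. The hypotheses $\kappa(G)\geq k$ and $|V(G)|\geq 2k-1$ are part of the theorem statement, so the theorem yields
\[\sn(G\square H)\geq \min\bigl(k|V(H)|,\,|V(G)|\lambda(H),\,(|V(G)|-2k+2)\lambda(H)+2\lambda(G)\bigr).\]
I will then show that under our hypotheses the first term is the minimum, so the bound reduces to $k|V(H)|$. The second term is easy: since $|V(H)|\leq |V(G)|-2k+4\leq |V(G)|$ (using $k\geq 2$; the $k=1$ case reduces to a tree and is handled by earlier results such as Corollary \ref{corollary:tree_and_gon=lambda}) and $\lambda(H)\geq k$, we get $|V(G)|\lambda(H)\geq |V(H)|\cdot k=k|V(H)|$.

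The third term is where the hypothesis $|V(H)|\leq |V(G)|-2k+4$ is tailored in. Using $\lambda(H)\geq k$, $\lambda(G)=k$, and $|V(G)|-2k+2\geq 0$ (which holds since $|V(G)|\geq 2k-1$), I can estimate
\[(|V(G)|-2k+2)\lambda(H)+2\lambda(G)\geq (|V(G)|-2k+2)k+2k=k(|V(G)|-2k+4)\geq k|V(H)|,\]
where the last inequality is exactly our hypothesis. So all three terms in the min are at least $k|V(H)|$, giving $\sn(G\square H)\geq k|V(H)|$.

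I don't foresee any real obstacle here: the statement is engineered precisely so that the third term in Theorem \ref{theorem:sn_lower_bound_general} is the binding one, and the numerical hypothesis $|V(H)|\leq|V(G)|-2k+4$ is exactly the one needed to dominate $(|V(G)|-2k+2)\lambda(H)+2\lambda(G)$ by $k|V(H)|$. The only minor care is in treating small-$k$ edge cases (already covered by earlier corollaries) and in checking that $|V(H)|\leq |V(G)|$ to control the middle term; both are straightforward.
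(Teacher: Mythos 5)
Your proof is correct and takes essentially the same route as the paper: apply Theorem \ref{theorem:sn_lower_bound_general} with this $k$ and verify that each of the three terms in the minimum is at least $k|V(H)|$ using $\lambda(H)\geq k$, $\lambda(G)=k$, and $|V(H)|\leq |V(G)|-2k+4$ (the paper phrases this by first computing the case $\lambda(H)=k$ and then invoking monotonicity in $\lambda(H)$, but the arithmetic is identical). Your caution about $k=1$ is in fact warranted beyond what you say: there $|V(G)|-2k+4=|V(G)|+2$, so the hypothesis does not force $|V(H)|\leq|V(G)|$ and the middle term can fall below $k|V(H)|$ (e.g.\ $G=P_3$, $H=P_5$ gives gonality $3$, not $5$), a gap the paper's own proof silently elides and which Corollary \ref{corollary:tree_and_gon=lambda} does not repair without the extra assumption $\gon(H)=\lambda(H)$ -- so the statement should be read with $k\geq 2$, as in all of the paper's applications.
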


\begin{proof}
If we further assume that \(\lambda(H)=k\), then Theorem \ref{theorem:sn_lower_bound_general} gives us
\begin{align*}
    \textrm{sn}(G\square H)\geq& \min\left(k|V(H)|,|V(G)|\lambda(H),(|V(G)|-2k+2)\lambda(H)+2\lambda(G)\right)
    \\=&\min\left(k|V(H)|,k|V(G)|,(|V(G)|-2k+2)k+2k\right)
    \\=&k\cdot \min\left(|V(H)|,|V(G)|,|V(G)|-2k+4\right)
    \\=&k\cdot|V(H)|,
\end{align*}where the final equality comes from the assumption \(|V(H)|\leq |V(G)|-2k+4\). For \(\lambda(H)\geq k\),  the term \(k\cdot |V(H)|\) will still be the minimum, since the other terms will not have been decreased.  On the other hand, \(\textrm{gon}(G\square H)\leq |V(H)|\textrm{gon}(G)=k\cdot |V(H)|\).  The equality of our upper and lower bounds gives the claimed result.
\end{proof}

\begin{corollary}
If \(T\) is a tree with \(m\geq 2\) vertices and \(k(m-2)+4\geq \ell>k\), then \[\textrm{sn}(K_{\ell}\square K_{k}\square T)=\textrm{gon}(K_{\ell}\square K_{k}\square T)=k\ell.\]
\end{corollary}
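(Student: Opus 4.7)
The plan is to apply Theorem \ref{theorem:many_equal_k} directly with the choice \(G = K_k\square T\) and \(H=K_\ell\), so that \(G\square H = K_\ell\square K_k\square T\) and \(|V(H)|=\ell\). The desired equality \(\sn(G\square H)=\gon(G\square H)=k\cdot|V(H)|\) from that theorem then reads \(k\ell\), matching the statement. So the task reduces to verifying the hypotheses of Theorem \ref{theorem:many_equal_k}.

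First, I would establish \(\kappa(G)=\lambda(G)=\gon(G)=k\). The gonality equality is immediate from Example \ref{example:known_products}, which records that \(\gon(T\square K_k)=k\) for any tree \(T\) on at least two vertices. For the connectivity side, the formula from \cite{connectivity_products}  already invoked in the proof of the earlier corollary about \(G\square K_\ell\square T\) gives
\[
\kappa(K_k\square T)=\min\bigl(\kappa(K_k)|V(T)|,\,\kappa(T)|V(K_k)|,\,\delta(K_k)+\delta(T)\bigr)=\min\bigl((k-1)m,\,k,\,k\bigr)=k,
\]
using \(m\geq 2\) and \(k\geq 2\) so that \((k-1)m\geq k\). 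Since \(\kappa(G)\leq\lambda(G)\leq\gon(G)\), all three invariants equal \(k\).

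Next I would check the remaining numerical hypotheses. The assumption \(\ell>k\) gives \(\lambda(H)=\lambda(K_\ell)=\ell-1\geq k\), which is exactly \(k\leq\lambda(H)\). The size condition \(|V(H)|\leq |V(G)|-2k+4\) translates to \(\ell\leq km-2k+4=k(m-2)+4\), which is the second standing hypothesis of the corollary. Finally, \(|V(G)|=km\geq 2k\geq 2k-1\) from \(m\geq 2\).

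With all hypotheses verified, Theorem \ref{theorem:many_equal_k} yields
\[
\sn(K_\ell\square K_k\square T)=\gon(K_\ell\square K_k\square T)=k\cdot|V(H)|=k\ell,
\]
as required. There is no genuine obstacle here beyond bookkeeping; the corollary is essentially a repackaging of Theorem \ref{theorem:many_equal_k} together with the known gonality and connectivity of \(K_k\square T\). The only small subtlety is confirming \((k-1)m\geq k\) in the connectivity computation, but this is immediate from \(m\geq 2\) and \(k\geq 2\) (and the boundary case \(k=1\) reduces the statement to one already subsumed by the \(k=1\) results of the preceding subsection).
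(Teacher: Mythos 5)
Your proposal is correct and follows the paper's proof exactly: the paper likewise sets \(G=K_k\square T\), \(H=K_\ell\), and verifies the same three hypotheses of Theorem \ref{theorem:many_equal_k} (namely \(k=\kappa(G)=\lambda(G)=\gon(G)\leq\ell-1=\lambda(H)\), \(\ell\leq km-2k+4\), and \(km\geq 2k-1\)). The only difference is that you spell out the connectivity computation for \(K_k\square T\) via \cite{connectivity_products}, which the paper leaves implicit since it already did the analogous computation in an earlier corollary.
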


\begin{proof}
Let \(G=K_{k}\square T\) and \(H=K_{\ell}\).  We then have that \(k=\kappa(G)=\lambda(G)=\gon(G)\leq \ell-1=\lambda(H)\), that \(|V(H)|=\ell\leq km-2k+4=|V(G)|-2k+4\), and that \(|V(G)|=km\geq 2k-1\).  This allows us to apply Theorem \ref{theorem:many_equal_k} to conclude the claimed result.
\end{proof}

Note that \(\ell=4\), \(k=3\), and \(m=2\) satisfies the hypotheses, so the gonality of the \(3\)-dimensional rook's graph \(K_4\square K_3\square K_2\) is \(12\) since \(K_2\) is a tree.  However, no larger value of \(\ell\) is allowed when \(m=2\).

\begin{corollary}
If \(n\leq \ell\leq k\), \(n\leq m\), and  \(k+\ell+n-4\leq m \), then
\[\textrm{sn}(K_{k,\ell}\square K_{m,n})=\textrm{gon}(K_{k,\ell}\square K_{m,n})=(k+\ell)n.\]
\end{corollary}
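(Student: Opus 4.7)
The plan is to apply Theorem \ref{theorem:many_equal_k} directly, but with the roles of the two factors swapped relative to the way the corollary is stated. Set \(G' = K_{m,n}\) and \(H' = K_{k,\ell}\); since \(K_{k,\ell}\square K_{m,n}\cong K_{m,n}\square K_{k,\ell}\), it suffices to compute \(\textrm{sn}(G'\square H')\) and \(\textrm{gon}(G'\square H')\). I expect this swap to be the only subtle point, since the three hypotheses of Theorem \ref{theorem:many_equal_k} are not symmetric in \(G\) and \(H\): the larger graph should play the role of \(G\).

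First I would verify the equalities for \(G' = K_{m,n}\). Because \(n\leq m\), Example \ref{example:lambda=gon} gives \(\lambda(K_{m,n})=\gon(K_{m,n})=m+n-\max(m,n)=n\), and since \(\kappa(G')\leq\lambda(G')\) while a standard Menger-type argument gives \(\kappa(K_{m,n})=\min(m,n)=n\), we obtain
\[
\kappa(G')=\lambda(G')=\gon(G')=n.
\]
Next I would check that \(n\leq \lambda(H')\). For \(H'=K_{k,\ell}\) with \(\ell\leq k\), the edge-connectivity is \(\lambda(H')=\ell\), and the hypothesis \(n\leq\ell\) supplies exactly what is needed.

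Then I would verify the two size conditions of Theorem \ref{theorem:many_equal_k}, with the parameter ``\(k\)'' of that theorem instantiated to \(n\). The inequality \(|V(H')|\leq |V(G')|-2n+4\) becomes \(k+\ell\leq m+n-2n+4\), equivalently \(k+\ell+n-4\leq m\), which is precisely the third hypothesis of the corollary. The inequality \(|V(G')|\geq 2n-1\) becomes \(m+n\geq 2n-1\), i.e.\ \(m\geq n-1\), which follows from \(n\leq m\). Having checked every hypothesis, an invocation of Theorem \ref{theorem:many_equal_k} yields
\[
\textrm{sn}(G'\square H')=\textrm{gon}(G'\square H')=n\cdot |V(H')|=n(k+\ell),
\]
which by the commutativity of the Cartesian product is the desired conclusion.

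The only real obstacle is bookkeeping: matching the parameters \(k,\ell,m,n\) of the corollary to the generic parameters of Theorem \ref{theorem:many_equal_k} and choosing the correct assignment of factors so that the asymmetric inequality \(|V(H)|\leq |V(G)|-2k+4\) is the one implied by the hypothesis \(k+\ell+n-4\leq m\). Once this assignment is made, every remaining verification is a one-line check.
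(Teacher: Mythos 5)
Your proposal is correct and follows essentially the same route as the paper: both apply Theorem \ref{theorem:many_equal_k} with \(K_{m,n}\) in the role of \(G\) (so that the theorem's parameter \(k\) is instantiated to \(n\)) and \(K_{k,\ell}\) in the role of \(H\), and both reduce the size condition \(|V(H)|\leq |V(G)|-2n+4\) to the hypothesis \(k+\ell+n-4\leq m\). The only difference is that you spell out the verification of \(\kappa(K_{m,n})=\lambda(K_{m,n})=\gon(K_{m,n})=n\) and of \(|V(G)|\geq 2n-1\) more explicitly than the paper does.
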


\begin{proof}
We have
\[\lambda(K_{k,\ell})=\ell\geq n=\kappa(K_{m,n})=\lambda(K_{m,n})=\textrm{gon}(K_{m,n})\]
where \(|V(K_{k,\ell})|=k+\ell\leq m-n+4=m+n-2n+4=|V(K_{m,n})|-2n+4\), allowing us to apply Theorem \ref{theorem:many_equal_k} to obtain the claimed result.
\end{proof}

\subsection{Open questions in product gonality}

All of the gonalities of product graphs that we have computed in this paper are ones where it turned out that we had the ``expected gonality'', equal to the upper bound of
\[\min(|V(G)|\gon(H),|V(H)|\gon(G)).\]Not all product graphs have this expected gonality; indeed, there exists a genus \(1\) graph \(G\) on three vertices with \(\lambda(G)=1\) such that \(\textrm{gon}(G\square G)\leq 5\), even though the ``expected'' gonality is \(6\) \cite[\S 1]{gonality_product}.  We summarize below some basic families of product graphs where we might hope to determine whether or not all gonalities are given by the upper bound.

\begin{itemize}
    \item \textbf{Tree product tree.}  These are known to have expected gonality by \cite[Proposition 11]{gonality_product}.
    
    \item \textbf{Tree product hyperelliptic.}  If the hyperelliptic graph \(H\) is \(2\)-edge-connected, then by Corollary \ref{corollary:tree_and_gon=lambda} we know that the product of it with a tree \(T\) has the expected gonality.  However, if the hyperelliptic graph is only \(1\)-edge-connected, the gonality of \(T\square H\) is in general open: we have an upper bound of \(\min(2|V(T)|,|V(H)|)\), and a lower bound of \(\min(|V(T)|,|V(H)|)\) from Corollary \ref{corollary:k=1}.
    
    \item \textbf{Hyperelliptic product hyperelliptic.}  The aforementioned counterexample does fall into the category of hyperelliptic product hyperelliptic, so not all product graphs of this form have the expected gonality.  However, by our Corollary \ref{corollary:2-connected_hyp}, we do have expected gonality if both hyperelliptic graphs are \(2\)-connected.  As the counterexample from \cite{gonality_product} had both graphs \(1\)-edge connected, we might ask whether we can always get expected gonality when both graphs are \(2\)-edge-connected, or when perhaps only one of them has an edge- or vertex-connectivity assumption.
\end{itemize}

Since all known examples of graphs \(G\square H\) with lower than expected gonality have both \(G\) and \(H\) graphs of positive genus, it is natural to pose the following question.

\begin{question}
Does there exist a tree \(T\) and a graph \(H\) where \(\textrm{gon}(T\square H)\) does not have the expected gonality of \(\min(|V(H)|,|V(T)|\gon(H))\)?
\end{question}

If the answer to this question is ``no'', then it would follow by induction that the gonality of the \(n\)-dimensional hypercube graph \(Q_n\) is \(2^{n-1}\) as conjectured in \cite[\S 4]{treewidth}.  Although we do not have a complete answer to this question, there are several cases handled earlier in this section where we now know that \(T\square H\) has the expected gonality:
\begin{itemize}
    \item When \(\lambda(H)=\textrm{gon}(H)\), by Corollary \ref{corollary:tree_and_gon=lambda}.
    \item  When \(|V(H)|/\lambda(H)\leq |V(T)|\), by Theorem \ref{theorem:product_gon}(i).
\end{itemize}

\begin{figure}[hbt]
    \centering
    \includegraphics{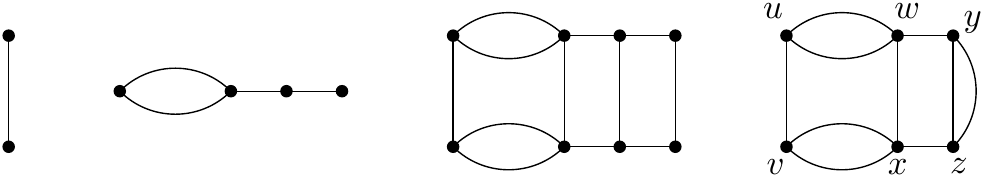}
    \caption{From left to right:  \(T=K_2\), \(H\), \(T\square H\), and \(J\)}
    \label{figure:product_example}
\end{figure}
We remark that scramble number will not be able to compute the gonality of \(T\square H\) for all choices of \(T\) and \(H\).  Consider for instance the graphs \(T=K_2\) and \(H\) pictured in Figure \ref{figure:product_example}, along with their product \(T\square H\) and a graph \(J\) obtained from \(T\square H\) by smoothing over \(2\)-valent vertices.  By \cite[Theorem 18]{connectivity_products}, we have \(\gon(T\square H)=4\).  Suppose for the sake of contradiction that \(T\square H\) has a scramble of order \(4\).  Then \(J\) also has a scramble \(\mathcal{S}\) of order \(4\) by \cite[Proposition 4.4]{harp2020new}.  There cannot be eggs completely contained in both of \(\{u,v,w,x\}\) and \(\{y,z\}\), since then \(A=\{y,z\}\) would be an egg-cut of size \(2\).  If all eggs intersect \(\{y,z\}\), then there must exist a hitting set of size \(2\); thus all eggs intersect \(\{u,v,w,x\}\).  However, \(\{v,w,x\}\) cannot be a hitting set, implying that \(\{u\}\) is an egg.  But then either \(A=\{u\}\) forms an egg-cut of size \(3\); or all eggs contain \(\{u\}\) and \(h(\mathcal{S})=1\).  All of these contradict \(||\mathcal{S}||=4\).  Thus \(\sn(T\square H)<\gon(T\square H)\).

\bibliographystyle{abbrv}

\end{document}